\documentclass[11pt,reqno]{amsart}
\usepackage{mathrsfs}
\usepackage{hyperref}
\makeatletter
\usepackage{amsfonts}
\usepackage{amsmath}
\usepackage{amssymb}
\usepackage{multicol}
\usepackage{stmaryrd}
\usepackage{cite}
\usepackage{epsfig}
\usepackage{color}
\usepackage{graphics}
\usepackage{graphicx}
\usepackage{epstopdf}

\allowdisplaybreaks

\numberwithin{equation}{section}

\newtheorem{theorem}{Theorem}[section]
\newtheorem{lemma}[theorem]{Lemma}

\newtheorem{proposition}[theorem]{Proposition}

\usepackage{xcolor}

\newcommand{\wtd}{\widetilde}

\newcommand{\R}{\mathbb{R}}

\newcommand{\cuad}{{\sqcap\kern-.68em\sqcup}}

\renewcommand{\phi}{\varphi}
\newcommand{\cA}{{\mathcal A}}

\newcommand{\cC}{{\mathcal C}}

\newcommand{\cG}{{\mathcal G}}

\newcommand{\cI}{{\mathcal I}}
\newcommand{\cJ}{{\mathcal J}}

\newcommand{\bX}{{\mathbb X}}

 \newcommand{\bT}{{\mathbb T}}

\newcommand{\loc}{{\rm loc}}

\newcommand{\dd}{\,\mathrm{d}}
\newcommand{\p}{\partial}
\newcommand{\abswen}[1]{\left\lvert #1\right\rvert}

\newcommand{\eps}{\varepsilon}

\begin{document}

\title[Propagation rates of the fractional Fisher-KPP equation]{Precise propagation rates of the fractional  Fisher-KPP  equation with free boundaries}
 \author[H. Chen, Y. Du, W. Ni]{Huyuan Chen$^{1,2}$,\quad  Yihong Du$^{3}$\quad  and\quad  Wenjie Ni$^{3}$}
\thanks{\hspace{-.55cm}
\mbox{\  } $1$ Center for Mathematics and Interdisciplinary Sciences, Fudan University, Shanghai 200433, PR China.\\
\mbox{\ }  $2$ Shanghai Institute for Mathematics and Interdisciplinary Sciences, Shanghai 200433, PR China\\
\mbox{\ \ \ \ } Emails: chenhuyuan@simis.cn,\ chenhuyuan@yeah.net.}
\thanks{\hspace{-.55cm}
\mbox{\  } $3$ School of Science and Technology, University of New England, Armidale, NSW 2351, Australia.\\
\mbox{\ \ \ \ }  Emails: ydu@une.edu.au (Du), wni2@une.edu.au (Ni).}

\date{\today}

\maketitle


\begin{abstract}
In this paper, we determine the propagation rates of the Fisher-KPP equation with free boundaries and fractional diffusion $(-\Delta)^s$ $(0<s<1)$, which was considered in \cite{CDN26} as a model for species spreading, where the density function $u(t,x)$ satisfies the fractional Fisher-KPP equation $u_t+(-\Delta)^su=f(u)$ over a one-dimensional spatial domain $(g(t), h(t))$, representing the expanding population range, with $x=g(t)$ and $x=h(t)$  the free boundaries. It was shown in \cite{CDN26} that the model is well-posed, and its long-time dynamics exhibits a spreading-vanishing dichotomy. 
We show in this paper that, when spreading occurs, 
 \[
\begin{cases}
\lim_{t\to\infty}\frac{h(t)}{t^{1/(2s)}}
=\lim_{t\to\infty}\frac{-g(t)}{t^{1/(2s)}}
=\left(\frac{2^{1-2s}\mu}{1-2s}\right)^{1/(2s)} & \mbox{ if $s\in(0,1/2)$,}\\[2mm]
\lim_{t\to\infty}\frac{h(t)}{t\ln t}
=\lim_{t\to\infty}\frac{-g(t)}{t\ln t}
=\mu & \mbox{ if $s=1/2$,}\\[2mm]
\lim_{t\to\infty}\frac{h(t)}{t}
=\lim_{t\to\infty}\frac{-g(t)}{t}
=c_0(s) & \mbox{ if $s\in(1/2,1)$},
\end{cases}
\]
where $c_0(s)\in (0,\infty)$ is uniquely determined by the associated semi-wave problem. The level sets of $u(t,x)$ propagate with the same rates as the free boundaries.
The proof uses delicate upper and lower solutions constructed from a
smooth profile function in the case $s\in (0, 1/2]$, and from the semi-wave profile when $s\in (1/2, 1)$.

\bigskip

\noindent
\textbf{Keywords}: Fractional Laplacian, free boundary, propagation rates.

\medskip

\noindent
\textbf{AMS Subject Classification}:  35B40, 35R11, 35R35

\end{abstract}

\setcounter{equation}{0}
 
 \section{Introduction } 

We determine the precise propagation rates governed by the following  free boundary problem with fractional diffusion:
\begin{equation}\label{eq 1.1}
	\begin{cases}
		\partial_t u +d(-\Delta)^s  u = f(u), & t>0,\ x\in \bigl(g(t),h(t)\bigr),\\[3pt] 
		u(t,x)=0, & t>0,\ x \in \mathbb{R}\setminus \bigl(g(t),h(t)\bigr),\\[3pt] 
		\displaystyle h'(t)= \mu \int_{g(t)}^{h(t)}\int_{h(t)}^{+\infty} \frac{u(t,x)}{|x-y|^{2s+1}}\,dy\,dx, & t>0,\\[3pt] 
		\displaystyle g'(t)=-\mu \int_{g(t)}^{h(t)}\int_{-\infty}^{g(t)} \frac{u(t,x)}{|x-y|^{2s+1}}\,dy\,dx, & t>0,\\[3pt] 
		u(0,x)=u_0(x), & x\in \overline{\mathcal{O}_0} =(-h_0,h_0), \\[3pt] 
		h(0)=-g(0)=h_0,
	\end{cases}
\end{equation}
where $s\in(0,1)$, $\mu,d,h_0$ are positive constants, and $(-\Delta)^s$ denotes the
one-dimensional fractional Laplacian. The initial datum $u_0$ is a nonnegative, nonzero
continuous function on $[-h_0,h_0]$ with $u_0(\pm h_0)=0$, and extended by 0 outside $[-h_0, h_0]$. 

Throughout the paper we assume that  $f$ is a Fisher-KPP type nonlinearity, namely, it satisfies
\begin{description}
\item[(f1)]\ $f\in C^1(\R_+)$, $f(0)=f(1)=0$,
$f'(0)>0>f'(1)$, $f(u)/u$ is non-increasing for $u>0$.
\end{description}
Therefore $f(u)>0$ in $(0,1)$,  $f(u)< 0$ in
$(1,\infty)$, and $f(u)/u\leq f'(0)$ for $u>0$. So the assumption ${\bf (f1)}$ is a
slightly strengthened version of the well-known KPP condition (\cite{KPP}), containing the Fisher nonlinearity $u(1-u)$ (\cite{F}) as a special case.

Problem \eqref{eq 1.1} is a natural extension of the free boundary model  in \cite{CDLL}, where the nonlocal diffusion term is defined via a continuous integrable kernel function $J(x)$. It can also be viewed as a nonlocal version of the free boundary model  in \cite{DL, DLou}, whose local diffusion term is given by the classical Laplacian.

However, the well-posedness of \eqref{eq 1.1}  has been open for quite some time  due to the  difficulties caused by the singularity and non-integrability of the associated kernel function of $(-\Delta)^s$, and the lack of general enough regularity results for operators extending the standard fractional Laplacian. 

These difficulties have been overcome recently in \cite{CDN26} by introducing an approximation approach, based on existing interior regularity estimates of \cite{FR2017} and boundary estimates arising from the heat kernel bounds of \cite{CKS}.  It was shown in \cite{CDN26} that \eqref{eq 1.1} has a unique solution defined for all $t>0$, and its long-time dynamics  exhibits a spreading-vanishing dichotomy, similar to the earlier models of \cite{DL, DLou, CDLL}.  

In view of the results in \cite{DLZ, DN-MA, DN25} on precise propagation profiles of the nonlocal free boundary model of \cite{CDLL}, and similar results in \cite{DLou, DMZ} for the local free boundary model of \cite{DL, DLou},
it is natural to ask, when spreading happens, what are the propagation rates of \eqref{eq 1.1}? In this paper,  we give a rather complete answer to this question.

It is well known that for the corresponding Cauchy problem
$u_t+d(-\Delta)^s u=f(u)$ in $\R$ with compactly supported nonnegative initial data, the
level sets of the solution $u(t,x)$ propagate exponentially fast in time (see 
\cite{CR13}). 

For the corresponding nonlocal free boundary model of \eqref{eq 1.1} with a continuous and integrable dispersal kernel $J(x)$,
the threshold condition for
finite speed spreading is, by 
\cite{DLZ}, 
\[
\int_0^\infty xJ(x)\,\dd x<\infty. 
\]
Precise propagation rates for this model was obtained in \cite{DLZ, DN-MA, DN25}.

Since
\[
(-\Delta)^s u(x)={\rm p.v.} \int_{\R}J_s(x-y)[u(x)-u(y)]\,\dd y,
\]
with
\[
J_s(x):=\frac{c_s}{|x|^{-(1+2s)}},\ \  c_s=2^{2s}\pi^{-1/2}s\,
\frac{\Gamma\!\left(\frac{1+2s}{2}\right)}{\Gamma(1-s)}>0,
\]
formally problem \eqref{eq 1.1} has the same form to the nonlocal model in \cite{CDLL, DLZ, DN-MA, DN25}, except that its dispersal kernel $J_s(x)$ has a singularity at $x=0$ and is not integrable over $\R$.

We will show that the spreading behaviour of \eqref{eq 1.1} is determined sorely by the tail property of $J_s(x)$; its singularity at $x=0$ does not make any visible contributions.
More precisely, the spreading rate of \eqref{eq 1.1} is the same as that for the free boundary model of \cite{CDLL} obtained in \cite{DLZ, DN-MA, DN25}, provided that $J_s(x)$ has the same decay rate  at infinity as the continuous kernel $J(x)$ in \cite{CDLL}.
So, in particular,   the threshold condition for \eqref{eq 1.1} to have a finite spreading speed is
$s>1/2$,
 and for $s\in(0,1/2)$ the spreading has the power rate $t^{1/(2s)}$, corresponding to the case
$\alpha=1+2s\in(1,2)$ in \cite{DN-MA}.
\medskip

Our main results are listed below.

  \begin{theorem} \label{teo 1} 
  
 Suppose that  $s\in(0, 1)$, {\bf (f1)} is satisfied, and  spreading happens to the  solution $(u, g, h)$ of \eqref{eq 1.1}. Then the following conclusions hold.
\begin{itemize}

 \item[(i)] For $s\in(0, \frac12)$, 
\begin{equation}\label{eq:main-result}
\lim_{t\to\infty}\frac{h(t)}{t^{1/(2s)}}
=\lim_{t\to\infty}\frac{-g(t)}{t^{1/(2s)}}
=\left(\frac{2^{1-2s}\mu}{1-2s}\right)^{1/(2s)}.
\end{equation}

 \item[(ii)] For $s=\frac12$, 
\begin{equation}\label{eq:main-result-half}
\lim_{t\to\infty}\frac{h(t)}{t\ln t}
=\lim_{t\to\infty}\frac{-g(t)}{t\ln t}
=\mu.
\end{equation}

\item[(iii)] For $s\in(\frac12, 1)$,   there exists a unique $c_0=c_0(s)>0$ such that
\begin{equation}\label{speed}
\lim_{t\to\infty}\frac{h(t)}{t}=-\lim_{t\to\infty} \frac{g(t)}{t}=c_0.
\end{equation}
\end{itemize}
\end{theorem}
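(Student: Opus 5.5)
We plan to prove the three parts by comparison, using upper and lower solutions of \eqref{eq 1.1} together with the comparison principle and the spreading--vanishing dichotomy established in \cite{CDN26}. When spreading happens we take from \cite{CDN26} that $u(t,\cdot)\to 1$ locally uniformly in $\R$ and $h(t),-g(t)\to\infty$. By the symmetry $x\mapsto -x$ it suffices to treat $h(t)$; the argument for $g(t)$ is the same.

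\emph{Heuristic for $s\in(0,1/2]$.} For large $t$, $u\approx 1$ on most of $(g(t),h(t))$. Then
\[
h'(t)\approx \mu\int_{g}^{h}\int_h^\infty \frac{\dd y\,\dd x}{|x-y|^{1+2s}}=\frac{\mu}{2s}\int_g^h (h-x)^{-2s}\dd x=\frac{\mu}{2s(1-2s)}(h-g)^{1-2s}.
\]
Using $h-g\approx 2h$ gives $h'\approx \frac{\mu 2^{1-2s}}{2s(1-2s)}h^{1-2s}$, hence $h^{2s}\approx \frac{2^{1-2s}\mu}{1-2s}t$, which is \eqref{eq:main-result}. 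For $s=1/2$ the same computation gives $h'\approx \mu\ln(h-g)$, hence $h\sim\mu t\ln t$.

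\emph{Bounds for $s\in(0,1/2]$.} The upper bound is comparatively easy. Since $u\le \bar u(t)$, the solution of $\bar u'=f(\bar u)$, and $\bar u(t)\to 1$ exponentially, we get
\[
h'-g'\le (1+Ce^{-\delta t})\,\frac{\mu}{s(1-2s)}(h-g)^{1-2s}.
\]
Integrating this ODE inequality for $h-g$ gives $\limsup (h-g)/t^{1/(2s)}\le 2\bigl(\frac{2^{1-2s}\mu}{1-2s}\bigr)^{1/(2s)}$. Obtaining the limit for $h$ alone requires more. For that we construct an explicit upper solution $(\bar u,\bar g,\bar h)$ with $\bar h(t)=-\bar g(t)=(1+\eps)\,\kappa\,(t+T)^{1/(2s)}$, where $\kappa=\bigl(\frac{2^{1-2s}\mu}{1-2s}\bigr)^{1/(2s)}$, and $\bar u=(1+\eps)$ times a smooth cutoff profile $\phi\bigl(x/\bar h(t)\bigr)$ that equals 1 on the interior and decays to 0 near $\pm 1$. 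For the lower bound we take $\underline h(t)=-\underline g(t)=(1-\eps)\kappa(t+T)^{1/(2s)}$ and $\underline u=(1-\eps)\phi_\ell(x/\underline h(t))$, with $\phi_\ell$ a smooth profile vanishing at $\pm1$ like a power. After waiting long enough for $u\ge 1-\eps/2$ on a large interval, we place this lower solution underneath.

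The main obstacle is verifying the differential inequality for $\underline u$ near the free boundary. There one must estimate $(-\Delta)^s\underline u$ on the rescaled profile. Scaling gives $(-\Delta)^s\underline u=O(\underline h^{-2s})$, which is $O(1/t)$ and so small compared with the reaction term in the bulk. The boundary layer is where it fails, since there $\underline u$ is small and $(-\Delta)^s\underline u$ behaves like $d^{-s}$, where $d$ is the distance to the boundary. To handle this, we would let the profile's transition layer have a width $L$ that is fixed or slowly growing, not proportional to $\underline h$. Inside this layer the term $\partial_t\underline u\approx -\underline h'\,\phi_\ell'$ is negative, of size about $\underline h'/L$, and it must absorb the defect. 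For $s<1/2$ we have $\underline h'\sim t^{1/(2s)-1}\to\infty$, which helps. For $s=1/2$ the growth is only logarithmic and $\eps$ has to be chosen carefully. We must also check the free-boundary inequality $\underline h'\le\mu\iint$. This holds because the integral is dominated by the far field, where $\underline u\approx 1-\eps$, and the layer contributes only a lower-order amount; the same reasoning handles the upper solution.

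\emph{The case $s\in(1/2,1)$.} Here we first solve the semi-wave problem. We look for $c>0$ and $q$ increasing on $(-\infty,0)$ with $q(-\infty)=1$, $q(0)=0$, satisfying
\[
d(-\Delta)^s q - c q' = f(q)\ \text{ on } (-\infty,0),\qquad q=0 \text{ on } [0,\infty),\qquad c=\mu\int_{-\infty}^0\int_0^\infty \frac{q(x)}{|x-y|^{1+2s}}\,\dd y\,\dd x.
\]
The right-hand side of the last relation is finite exactly when $s>1/2$. Existence of the profile $q_c$ for each $c$ would follow from a truncated-kernel approximation, as in \cite{DLZ}, combined with the regularity tools of \cite{CDN26}. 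Monotonicity of $c\mapsto\mu\iint q_c$ would then give a unique fixed point $c_0$. The upper and lower solutions are $(1\pm\eps)\,q\bigl(x-(c_0\pm\delta)t-X\bigr)$, glued to its reflection on the left side. Comparing with these gives $\limsup h/t\le c_0+\delta$ and $\liminf h/t\ge c_0-\delta$; letting $\delta\to0$ proves \eqref{speed}.
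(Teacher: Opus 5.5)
Your plan for (iii) follows the paper's route: the semi-wave $q_c$, the fixed point of $c\mapsto\frac{\mu}{2s}\int_{-\infty}^0 q_c(y)|y|^{-2s}\dd y$, and shifted semi-waves multiplied by $1\pm\eps$, with the lower one glued to its reflection. Two details need correcting or adding. First, $q$ is decreasing, not increasing. Second, a unique $c_0$ requires uniqueness of $q_c$ and its strict monotonicity and continuity in $c$, and the existence proof must rule out that the approximating profiles degenerate into an entire travelling wave (the paper uses the exponential spreading of \cite{CR13} for this). The genuine gaps are in the case $s\in(0,1/2]$.

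\emph{The lower solution.} You have the sign of $\partial_t\underline u$ in the boundary layer backwards. The support of $\underline u$ expands, so at fixed $x$ near the front $\underline u$ increases in $t$ and $\partial_t\underline u\ge0$. The paper computes $\partial_t\underline u=\frac{K_1}{2s}\psi'(\zeta)\frac{x}{h}\frac1\eps h^{-2s}$. This term obstructs $\partial_t\underline u+d(-\Delta)^s\underline u\le f(\underline u)$, so a large $\underline h'$ hurts rather than helps.

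This rules out a fixed layer width $W$. There $\partial_t\underline u=\underline h'\psi'(\zeta)/W$ and $\int_0^1\psi'=K_2$, so somewhere in the layer $\partial_t\underline u\ge K_2\underline h'/W$, while $f(\underline u)-d(-\Delta)^s\underline u$ stays bounded. This fails for $s<1/2$, where $\underline h'\sim t^{1/(2s)-1}\to\infty$, and also for $s=1/2$, where $\underline h'\sim\ln t$.

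You have also misread $(-\Delta)^s\underline u$. For a $C^2$ profile vanishing like $\zeta^\rho$ with $\rho>2$, it is bounded, and near the free boundary it is negative: at $x=\underline h$ it equals $-c_s\int\underline u(y)|\underline h-y|^{-1-2s}\dd y$. This negativity is what makes the construction work.
\begin{itemize}
\item For $\zeta\le\zeta_*$ small, $\partial_t\underline u\lesssim\zeta^{\rho-1}\underline h'/W\le-d(-\Delta)^s\underline u$.
\item For $\zeta\ge\zeta_*$, one uses $f(\underline u)\ge\sigma_0\underline u$, with $\partial_t\underline u/\underline u$ and $W^{-2s}/\underline u$ both small.
\end{itemize}
For $s<1/2$ the paper takes $W=\eps\underline h$. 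This is your original scaling $\phi_\ell(x/\underline h)$, which you should not abandon. Every term is then $O(\underline h^{-2s})$, and the free-boundary integral loses only $O(\eps^{1-2s})$ because $r^{-2s}$ is integrable at $0$. For $s=1/2$, taking $W\sim\underline h$ would destroy the $\ln\underline h$ in $\int_W^{2\underline h}\dd r/r$. So $W$ must grow, but slowly: the paper uses $W=(t+\theta)^{\tilde\eps}$, $K_1=(1-\eps)(1-\tilde\eps)\mu$ and $\zeta_*\sim(\ln t)^{-1/(\rho-1)}$.

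\emph{The upper bounds.} For $s<1/2$, the cut-off upper solution $(1+\eps)\phi(x/\bar h)$ cannot satisfy $\partial_t\bar u+d(-\Delta)^s\bar u\ge f(\bar u)$. Where $\bar u\in(0,1)$ on the scale $\bar h$, the reaction is of order one, while $\partial_t\bar u$ and $(-\Delta)^s\bar u$ are $O(\bar h^{-2s})=O(t^{-1})$. The construction is also unnecessary. Set $\kappa:=\bigl(\frac{2^{1-2s}\mu}{1-2s}\bigr)^{1/(2s)}$. Your bound $\limsup(h-g)/t^{1/(2s)}\le2\kappa$, combined with the lower bounds for both $h$ and $-g$, already gives $\limsup h/t^{1/(2s)}\le\kappa$. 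Alternatively, use the paper's $\bar u\equiv1+\eps$ on $[-\bar h,\bar h]$ with $\bar h=(Kt+\theta)^{1/(2s)}$; this works since $f(1+\eps)<0$ and $(-\Delta)^s\bar u\ge0$ there.

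For $s=1/2$ your crude bound $u\le\bar u(t)$ gives nothing, because $\int_g^h\dd x/(h-x)=\infty$; your displayed inequality carries the factor $1/(1-2s)$. The missing ingredient is a uniform boundary estimate
\[
u(t,x)\le C_0\min\{(h(t)-x)^{s},(x-g(t))^{s}\}\quad\text{for } t>1 .
\]
The paper proves it by comparing, on each unit time interval $[\tau-1,\tau]$, with the Dirichlet problem on the frozen interval $(g(\tau),h(\tau))$ and applying the heat-kernel-based bound from \cite{CDN26}. With it, $h'$ and $-g'$ are at most $\mu(1+\eps)\ln(h-g)+2\mu C_0$ for large $t$. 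An ODE comparison then gives $\limsup h/(t\ln t)\le\mu$; the paper compares $\max\{h,-g\}$, but comparing $h-g$ together with the lower bounds also works.
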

 \bigskip
 
 The positive constant $c_0$ in \eqref{speed} is uniquely determined by the associated semi-wave problem, consisting of the following equations:
\begin{equation}\label{eq 1.1-ell}
 \left\{
\begin{array}{cll}
 d(-\Delta)^s w-c w' = f(w)  \quad
  &{\rm in}\  \, (-\infty, 0)
  \\[2mm]
 0\leq w\leq 1 \quad
  &{\rm in}\  \, \R, 
\\[2mm]
 \displaystyle   w(-\infty)=1,
\ \ \ w=0 \quad  &{\rm in}\  \, [0,+\infty), 
  \end{array}
 \right.
 \end{equation}
 and
 \begin{equation}\label{c0}
 c= \frac{\mu }{2s}  \int_{-\infty}^{0}  w(y)|y|^{-2s} dy.
 \end{equation}
 
 The conclusions on the above semi-wave problem are stated in the following theorem: 
 
  \begin{theorem}\label{teo ell} Suppose that $s\in(\frac12,1)$  and $\bf (f1)$ is satisfied. Then the following conclusions hold:
  \begin{itemize}
  \item[$(a)$]  For each $c\geq 0$, \eqref{eq 1.1-ell} has a   unique  solution $w=w_{c}$ such that  $w_{c}(x)$ is strictly decreasing in $x\in (-\infty, 0]$.  
Furthermore, the mapping $c\mapsto w_{c}(x)$ is strictly decreasing in $[0,\infty)$ for any fixed $x<0$, and is  continuous uniformly in $x\in\R$.
\item[$(b)$] There exists a unique $c_0=c_0(s)\in (0, \infty)$ such that
\[
c_0= \frac{\mu }{2s}  \int_{-\infty}^{0}  w_{c_0}(y)|y|^{-2s} dy.
\]
 \end{itemize}
  \end{theorem}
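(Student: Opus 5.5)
The plan is to prove part (a) by a monotone iteration / sliding argument on half-lines, and then obtain part (b) from monotonicity and continuity in $c$ via the intermediate value theorem.

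\emph{Existence in (a).} For fixed $c\ge0$ and $L>0$, consider the truncated problem
\[
d(-\Delta)^s w - c w' = f(w) \text{ in } (-L,0), \qquad w=0 \text{ in } [0,\infty), \qquad w=1 \text{ in } (-\infty,-L].
\]
The constants $0$ and $1$ are an ordered pair of sub/supersolutions. Monotone iteration, or the parabolic problem started from $1_{(-\infty,0)}$, then gives a maximal solution $w_L$. A sliding argument, comparing $w_L(\cdot+\tau)$ with $w_L$ for $\tau>0$, shows that $w_L$ is nonincreasing. The solutions $w_L$ are monotone in $L$, and the interior estimates of \cite{FR2017} together with boundary H\"older estimates of the type used in \cite{CDN26} (of order $s$ near $x=0$) let us pass to the limit $L\to\infty$. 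The limit $w$ is nonincreasing.

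The condition $w(-\infty)=1$ needs a separate argument. I would place below $w$ a compactly supported subsolution $\epsilon\phi_R(\cdot-x_0)$, where $\phi_R$ is the principal Dirichlet eigenfunction of $d(-\Delta)^s$ on a ball whose eigenvalue lies below $f'(0)$. The drift term $c\phi'$ must be absorbed; I would handle it with a large radius $R$ or a shifted-profile argument. This shows $\liminf_{x\to-\infty}w>0$. The limit $w(-\infty)$ must then be a zero of $f$ (test the equation against shifts and let $x\to-\infty$), so it equals $1$. Strict monotonicity follows from the strong maximum principle for the nonlocal operator applied to $w(\cdot)-w(\cdot+\tau)$.

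\emph{Uniqueness and monotonicity in $c$.} For uniqueness, take two solutions $w_1,w_2$. Set $k^*=\inf\{k\ge1: kw_1\ge w_2\}$, or use the sliding method with the KPP property that $f(u)/u$ is nonincreasing. If $k^*>1$, the ordering touches at an interior point or at $-\infty$. An interior touching point contradicts the strong maximum principle. At $-\infty$ we need $f'(1)<0$ to localize the argument. The boundary point $x=0$ has to be treated with the Hopf-type behaviour $w\sim c|x|^s$, which is the delicate part.

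For $c_1<c_2$, $w_{c_2}$ is a subsolution of the $c_1$-problem, since $-c_1w'\ge -c_2w'$ when $w'\le0$. Comparison gives $w_{c_2}\le w_{c_1}$, and strict inequality follows from the strong maximum principle. Continuity in $c$, uniform in $x$, follows from compactness plus uniqueness. The uniform limits at $-\infty$ are controlled by monotonicity and the fact that $1$ is stable.

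\emph{Part (b).} Define
\[
G(c)=\frac{\mu}{2s}\int_{-\infty}^0 w_c(y)|y|^{-2s}\,dy .
\]
Since $s>1/2$ the integral converges at $-\infty$, and since $s<1$ it converges at $0$. Dominated convergence with the majorant $\min(1,|y|^{-2s})$ shows that $G$ is continuous. It is strictly decreasing in $c$, and $G(0)>0$. Hence $c\mapsto c-G(c)$ is strictly increasing, negative at $c=0$, and tends to $+\infty$ because $G\le G(0)$. This gives a unique root $c_0\in(0,\infty)$.

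The main obstacle is the comparison and uniqueness step near $x=0$, where the solution degenerates like $|x|^s$ and the drift term sits at a boundary point. There I would rely on fractional Hopf lemmas: both solutions behave like $\kappa_i|x|^s$, so their ratio stays bounded and the touching argument goes through.
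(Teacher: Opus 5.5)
\textbf{The genuine gap is the nondegeneracy step}, i.e.\ showing $\liminf_{x\to-\infty}w>0$ for the limit of $w_L$ when $c>0$. Your barrier $\epsilon\phi_R(\cdot-x_0)$, built from the Dirichlet eigenfunction of $d(-\Delta)^s$, is not a subsolution of $d(-\Delta)^sv-cv'\le f(v)$ for any $c>0$ and any $R$. Near the right endpoint of the ball $\phi_R\sim\mathrm{dist}^s$, so $-c\phi_R'\sim cs\,\mathrm{dist}^{s-1}\to+\infty$. Meanwhile $d(-\Delta)^s\phi_R=\lambda_R\phi_R$ and $f(\epsilon\phi_R)$ both tend to $0$ there. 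Enlarging $R$ does not touch this boundary layer, and the contact point of your sliding argument may well lie in it.

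This is not a technicality. It is exactly where existence for \emph{every} $c\ge0$ has to come from the heavy tail of the kernel: for $s=1$, semi-waves exist only for $c<2\sqrt{df'(0)}$. Switching to the principal eigenfunction of the non-self-adjoint operator $d(-\Delta)^s-c\partial_x$ would require proving that its eigenvalue on $(-R,R)$ tends to $0$ as $R\to\infty$, which is the whole difficulty.

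The paper sidesteps this in three moves. It imposes $w=\sigma>0$ on $[0,\infty)$, so the $R\to\infty$ limit satisfies $\psi_\sigma\ge\sigma$ and hence $\psi_\sigma(-\infty)=1$ for free. It then normalizes $\psi_\sigma(x_\sigma)=\frac12$ and lets $\sigma\to0$. The only bad case, $x_\sigma\to-\infty$, yields an entire front with $\psi_0(-\infty)=1$ and $\psi_0(+\infty)=0$; then $\psi_0(x-ct)$ is a supersolution of the Cauchy problem moving at finite speed, which contradicts the exponential spreading of \cite{CR13}. Within your framework, one could instead use a compactly supported profile vanishing to order $\rho>2$ at its edge, such as $\psi\big(\frac{L-|x|}{\eps L}\big)$ from Section 4. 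Its fractional Laplacian is $\le -C_0L^{-2s}$ in a thin edge layer and absorbs the drift term there, and $\rho>2s$ makes the remaining region work for large $L$. Some such new ingredient is indispensable.

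\textbf{Uniqueness at $x=0$.} In your uniqueness argument, and likewise for the monotonicity in $c$, bounded ratios $w_2/w_1$ near $0$ only make $k^*$ finite. To exclude $\limsup_{x\to0^-}w_2/w_1=k^*$ you need two things you do not supply: a boundary Hopf lemma for $d(-\Delta)^s-c\partial_x$ with a bounded potential, applied to $k^*w_1-w_2$, and the lower bound $w_1\ge\kappa|x|^s$. This route is workable but substantial. Note also that $-\infty$ plays no role here, since $w_2/w_1\to1<k^*$.

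The paper avoids all boundary analysis by sliding $k\Phi(\cdot+\tau)$ with $k<1$. By \textbf{(f1)} this is a subsolution. It vanishes on $[-\tau,\infty)$ while $\psi_0>0$ on $[-\tau,0)$, and $\psi_0-k\Phi(\cdot+\tau)\to1-k>0$ at $-\infty$. So a first contact can only occur in $(-\infty,-\tau)$, where the strong maximum principle rules it out; then let $\tau\to0$ and $k\to1$. The same device gives the monotonicity in $c$.

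\textbf{The majorant in (b).} The function $\min\{1,|y|^{-2s}\}$ does not dominate $w_c(y)|y|^{-2s}$ near $y=0$, where the latter is only bounded by $C|y|^{-s}$. The correct majorant is $\min\{C|y|^{-s},|y|^{-2s}\}$. This requires $w_c\le w_0\le\min\{1,C|y|^s\}$ uniformly in $c$, which follows from the monotonicity in $c$ together with the boundary estimate for $w_0$.
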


 The rest of this paper is organized as follows. In Section 2, we consider the semi-wave problem and prove Theorem \ref{teo ell}.   
 In Section 3  we use the semi-wave solution to determine the spreading speed  for the case
 $s\in(\frac12,1)$.  Section 4 is devoted to the case $s\in(0,\frac12]$, where the precise rates of acceleration of the propagation is determined.

\setcounter{equation}{0}
 \section{The semi-wave problem} 
 This section is devoted to the proof of Theorem \ref{teo ell}.
Since the value of the diffusion coefficient $d$ in \eqref{eq 1.1-ell} does not cause any difference in  our proof of Theorem \ref{teo ell},  without loss of generality,  we  assume $d=1$ throughout this section. 

Our strategy is to first consider the following perturbed version of \eqref{eq 1.1-ell} with $\sigma\in (0, 1/4]$:
\begin{equation}\label{eq 1.1-e}
 \left\{
\begin{array}{cll}
 (-\Delta)^s w-c w' = f(w)  \quad
  &{\rm in}\  \, (-\infty,0), 
  \\[2mm]
 \sigma\leq w\leq 1 \quad
  &{\rm in}\  \, \R, 
\\[2mm]
 \displaystyle  w=\sigma \quad  &{\rm in}\  \, [0,+\infty), \\[2mm]
 w(-\infty)=1.
 \end{array}
 \right.
 \end{equation}

We will show that, for every $c\geq 0$, this problem has a solution $\psi_\sigma(x)$ whose limit as $\sigma\to 0$ gives the unique solution of \eqref{eq 1.1-ell},
which we denote by $\psi_0^c$. We will then use \eqref{c0} with $w_c=\psi_0^c$ to uniquely determine $c_0$.

\subsection{A truncated version of the perturbation problem \eqref{eq 1.1-e}} 

For fixed $s\in (1/2, 1)$, $c\geq 0$, $\sigma\in [0, 1/4)$  and $R>1$, we  consider the following problem
\begin{equation}\label{eq 1.1-eR}
 \left\{
\begin{array}{cll}
 (-\Delta)^s w-c w' = f(w)  \quad
  &{\rm in}\  \, I_R:=(-R,0), \\[2mm]
0\leq w\leq 1 \quad
  &{\rm in}\  \, \R, 
\\[2mm]
\displaystyle  w=\sigma \quad  &{\rm in}\  \, [0,+\infty),\\[2mm]
 \displaystyle  w=1 \quad  &{\rm in}\  \, (-\infty,-R].
 \end{array}
 \right.
 \end{equation}

   \begin{proposition}\label{prop-1}
Assume that $\bf (f1)$ is satisfied, and
$s\in(\frac12,1),\  c\geq 0,\ \sigma\in (0,\frac14].$     
  Then \eqref{eq 1.1-eR} has a unique solution  $\psi_{\sigma, R}(x)$, which is strictly decreasing in $(-R,0)$ and for some $C>0$ independent of $R>1$,
  \begin{equation}\label{rho^s}
  \psi_{\sigma, R}(x)-\sigma\leq C|x|^s \mbox{ in } I_R.
  \end{equation}   
  Moreover, 
 
 $(i)$ for any  fixed $\sigma\in[0, 1/4)$,  the mapping $R\mapsto \psi_{\sigma, R}$ is decreasing, i.e.
 \begin{equation}\label{eq 1.1-e0}
  \psi_{\sigma, R_1}(x)< \psi_{\sigma, R_2}(x) \ \ \text{for  $x\in I_{R_1}$  \ \, if $R_1>R_2>0$;}
   \end{equation}
   
  $(ii)$  for any fixed $R>1$,  the mapping $\sigma\mapsto \psi_{\sigma, R}$ is decreasing, i.e.
 \begin{equation}\label{eq 1.1-e00}
  \psi_{\sigma_1, R}(x)< \psi_{\sigma_2, R}(x) \ \ \text{for $x\in I_{R}$  \ \, if $0\leq \sigma_1<\sigma_2<1/4$.}
   \end{equation}

  \end{proposition}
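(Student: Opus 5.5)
The plan is to obtain existence from the method of upper and lower solutions on the bounded interval $I_R$, and to get uniqueness, monotonicity in $x$, and the two monotonicities in $R$ and $\sigma$ from a comparison principle for $(-\Delta)^s - c\,\partial_x$ combined with sliding arguments. The constants $\underline w\equiv\sigma$ (more precisely $\underline w=\sigma$ on $(-R,\infty)$ and $\underline w=1$ on $(-\infty,-R]$) and $\overline w\equiv 1$ are ordered lower and upper solutions. For the lower one, at $x\in I_R$ we have $(-\Delta)^s\underline w(x)=-c_s\int_{-\infty}^{-R}(1-\sigma)|x-y|^{-1-2s}dy<0\le f(\sigma)$, since $f(\sigma)>0$ for $\sigma\in(0,1/4]$. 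For the upper one, $(-\Delta)^s 1=0=f(1)$.

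Existence proceeds by monotone iteration. Fix $K>\max_{[0,1]}|f'|$ and solve repeatedly the linear Dirichlet problem
\[
(-\Delta)^s w_{k+1}-c\,w_{k+1}'+Kw_{k+1}=f(w_k)+Kw_k
\]
in $I_R$, with the exterior data above. For $s>1/2$ the drift is a lower-order term, so this problem is solvable with $C^s$ boundary regularity. One can see this via a fixed point for the drift, or by approximation as in \cite{CDN26} using the interior estimates of \cite{FR2017}. The weak maximum principle makes the iterates monotone, so they converge to a solution $\psi_{\sigma,R}$ with $\sigma\le\psi_{\sigma,R}\le1$.

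The bound \eqref{rho^s} follows from a barrier near $x=0$. Take $\sigma+C\,(-x)_+^s$, truncated by $1$. Since $(-\Delta)^s (x_+)^s=0$ on the half-line, this function is a supersolution: the drift term $-c\,w'$ has the favourable sign because $w$ is decreasing toward $0$, the truncation only helps, and the right-hand side is absorbed by choosing $C$ large. The resulting $C$ depends only on $s$, $c$ and $\max f$, not on $R$. I expect the main technical obstacle to be here: the truncated barrier must be checked on all of $I_R$, uniformly in $R$, and the term $-c\,w'$ with $w'\sim |x|^{s-1}$ must be handled. The idea is to use the one-sided sign of this term, and a localized barrier of the form $(-x)^s(1-\text{small correction})$ if needed.

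For uniqueness and strict monotonicity in $x$, I would use the sliding method. Given two solutions $w_1,w_2$, set $w_1^\tau(x)=w_1(x-\tau)$. For $\tau$ close to $R$ one has $w_1^\tau\ge w_2$, because $w_1^\tau=1$ on a large set and both functions lie between $\sigma$ and $1$. Decrease $\tau$ to the infimal $\tau^*$ with $w_1^\tau\ge w_2$ on $\R$. If $\tau^*>0$, then a touching point lies in the open interval. There the nonlocal strong maximum principle gives a contradiction: $(-\Delta)^s(w_1^\tau-w_2)<0$ at a zero minimum unless the difference vanishes identically, and it cannot vanish since the exterior data differ. The Lipschitz continuity of $f$ allows the difference to be treated as a solution of a linear equation $L v=c(x)v$. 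The touching cannot occur at the endpoints, because of the continuity and the strict inequality of the exterior data there. Taking $\tau\to0$ gives $w_1\ge w_2$, and symmetrically $w_2\ge w_1$, so the solution is unique. The same sliding with $w_1=w_2$ and $\tau>0$ gives $\psi(x-\tau)\ge\psi(x)$, and the strong maximum principle upgrades this to strict monotonicity.

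Finally, parts (i) and (ii) follow by comparison. If $R_1>R_2$, then $\psi_{\sigma,R_1}$ is a subsolution of the $R_2$-problem on $I_{R_2}$, since it is $\le1$ on $(-\infty,-R_2]$. Comparison gives $\psi_{\sigma,R_1}\le\psi_{\sigma,R_2}$, and the inequality is strict by the strong maximum principle, because equality fails on $(-R_1,-R_2)$. Similarly, if $\sigma_1<\sigma_2$, the exterior data are ordered with strict inequality on $[0,\infty)$, so comparison yields $\psi_{\sigma_1,R}<\psi_{\sigma_2,R}$. For $\sigma=0$ the lower solution becomes $0$, and existence is again obtained from the limit of monotone iteration, or as the decreasing limit in $\sigma$.
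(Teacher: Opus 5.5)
Your argument is correct in outline. It reaches existence and the bound \eqref{rho^s} by a different route from the paper, and one step, the derivation of (i)--(ii), has to be rerouted.

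\textbf{Existence and the bound \eqref{rho^s}.} The paper writes $\psi=\phi_0+w_0$, where the smooth $\phi_0$ carries the exterior data. It obtains $w_0$ by the Schauder fixed point theorem for $w\mapsto\mathcal{G}_{s,c}[f_+(\phi_0+w)+(-\Delta)^s\phi_0-c\phi_0']$, built from the Green function of $(-\Delta)^s-c\frac{d}{dx}$ on $I_R$ in \cite{GJZ}. It then gets $\sigma\le\psi\le1$ a posteriori from the maximum principle, and reads \eqref{rho^s} off the kernel bound $G\le C\rho_R(x)^s|x-y|^{s-1}$.

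Your monotone iteration gives $\sigma\le\psi\le1$ directly. It needs only the linear theory for $(-\Delta)^s-c\partial_x+K$, which is at the same level as the paper's use of $G_{s,c}$.

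Your truncated barrier is the more transparent way to see that $C$ does not depend on $R$. Integrating the kernel bound over $y\in I_R$ as written costs a factor $R^s$. Since $-c\tilde W'\ge0$ on $(-r_0,0)$, your constant is also uniform in $c\ge0$, which is the uniformity used in Step~5 of the proof of Theorem~\ref{teo ell}.

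Be precise about the mechanism when $c=0$. There $\sigma+C(-x)_+^s$ is exactly $s$-harmonic on $(-\infty,0)$ and the drift contributes nothing. So the absorption of $\max_{[0,1]}f$ comes entirely from the truncation; it is not merely helped by it. With $r_0=((1-\sigma)/C)^{1/s}$ and $x\in(-r_0,0)$, you have
$(-\Delta)^s\tilde W(x)\ge c_s\int_{-\infty}^{-2r_0}(W(y)-1)|x-y|^{-1-2s}\,dy\ge c(s)\,C\,r_0^{-s}=c(s)\,C^2/(1-\sigma)$.
Since $\psi\le 1=\tilde W$ on $(-\infty,-r_0]$, the comparison reduces to a linear maximum principle on $(-r_0,0)$.

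\textbf{Sliding.} This part matches the paper, except that you translate to the right so the translate lies above, while the paper translates to the left so it lies below. Your upgrade to strict monotonicity is a step the paper leaves implicit. Excluding touching at the endpoints uses $\sigma<w<1$ in $I_R$. Establish this first by the strong maximum principle, as the paper does.

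\textbf{The step to reroute: (i)--(ii).} These do not follow from ``comparison'' as you phrase it. Since $f$ need not be non-increasing on $[\sigma,1]$ (recall $f'(0)>0$), there is no direct comparison principle between a sub- and a supersolution of $(-\Delta)^s w-cw'=f(w)$: at a positive maximum of the difference, $f(\underline w)-f(\overline w)$ has no sign.

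The paper proves (i) and (ii) by sliding again, so the nonlinear terms cancel at the touching point. Within your framework you can instead observe that $\psi_{\sigma,R_1}$ (resp.\ $\psi_{\sigma_1,R}$) is a subsolution of the other problem. Run your monotone iteration upward from it with supersolution $1$, and conclude by the uniqueness you have already proved. With either fix, strictness follows from the strong maximum principle as you say.
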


The proof of Proposition \ref{prop-1} relies on some priori estimates. 
  For $\beta>0$, let   $k$ be the greatest integer satisfying $k<\beta$ and set $\beta'=\beta-k\in (0, 1]$.  
  Let $U$ be an open set of $\R^n$. Then we use $\cC^\beta(U)=\cC^{k,\beta'}(U)$ to denote the set of functions $u$ satisfying 
 \[\|u\|_{\cC^\beta(U)}=\|u\|_{C^{k,\beta'}(U)}:=\sum_{l=0}^k\sup_{x\in U}|D^l u(x)|  +\sup_{x,y\in U,\ x\neq y}\frac{|D^ku(x)-D^ku(y)|}{|x-y|^{\beta'}}<+\infty.\]  
 Set
 $$\cC^{\beta}_0 (U):=\{u\in \cC^\beta(\R): u=0\quad {\rm in}\ \R\setminus U\} $$
 and particularly 
 $\cC_0 (U)=\cC^0_0 (U)$. 
 The following result follows directly from \cite[Proposition 2.8]{S07}:
 
   \begin{lemma}\label{Teo-r}
Let $s\in(0,1)$, $U$  a bounded Lipchitz domain in $\R^n$ and $w$  a solution of
\[
(-\Delta)^s w=f\quad\text{in }\, U
\]
with
\[
w\in L^\infty(\mathbb{R}^n),\qquad
f\in \cC^\gamma_{\mathrm{loc}}(U),\ \ \ \gamma\in(0,1).
\]
Then
\[
w\in \cC_{\mathrm{loc}}^{\gamma+2s}(U) \ \mbox{
provided $\gamma+2s\notin\mathbb{N}$.}
\]
 Furthermore,
\[
\|w\|_{\cC^{\gamma+2s}(U')}
\le
C\Big(
\|f\|_{\cC^\gamma(U'')}
+\|w\|_{L^\infty(\mathbb R^n)}
\Big)
\]
for  $U'\Subset U''\Subset U$ and some $C>0$.
\end{lemma}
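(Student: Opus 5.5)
The plan is to reduce the statement to the interior estimate of \cite[Proposition 2.8]{S07} by a scaling and covering argument. That result says: if $v\in L^\infty(\R^n)$ solves $(-\Delta)^s v=F$ in the unit ball $B_1$ with $F\in \cC^\gamma(B_1)$, then $v\in \cC^{\gamma+2s}(B_{1/2})$ and
\[
\|v\|_{\cC^{\gamma+2s}(B_{1/2})}\le C\big(\|F\|_{\cC^\gamma(B_1)}+\|v\|_{L^\infty(\R^n)}\big),
\]
provided $\gamma+2s\notin\mathbb N$. This is the only place where that integrality condition enters.

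First, I fix $U'\Subset U''\Subset U$ and set $r:=\tfrac14\,\mathrm{dist}(U',\R^n\setminus U'')>0$. Because $\overline{U'}$ is compact, I cover it by finitely many balls $B_r(x_i)$, $i=1,\dots,N$, with $x_i\in \overline{U'}$. By the choice of $r$ the doubled balls satisfy $B_{2r}(x_i)\subset U''$. On each ball I rescale, setting $w_i(z):=w(x_i+2rz)$. By the homogeneity of the fractional Laplacian,
\[
(-\Delta)^s w_i(z)=(2r)^{2s}f(x_i+2rz)=:F_i(z)\quad\text{in } B_1 .
\]
Moreover $\|w_i\|_{L^\infty(\R^n)}=\|w\|_{L^\infty(\R^n)}$ and $\|F_i\|_{\cC^\gamma(B_1)}\le C(r)\|f\|_{\cC^\gamma(U'')}$. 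Applying \cite[Proposition 2.8]{S07} and scaling back gives $w\in\cC^{\gamma+2s}(B_r(x_i))$, with a bound by $C(r)\big(\|f\|_{\cC^\gamma(U'')}+\|w\|_{L^\infty}\big)$. The constants depend only on $r,n,s,\gamma$.

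Next I patch the local bounds into a bound on $U'$. The sup norms of $w$ and of its derivatives up to order $k$ on $U'$ are controlled by the maximum over the $N$ balls. For the top-order Hölder seminorm I argue as follows. Pairs $x,y\in U'$ with $|x-y|<r$ lie in a common ball $B_{2r}(x_i)$, where the estimate also holds after applying the same argument with slightly enlarged radii. Pairs with $|x-y|\ge r$ are handled by $|D^kw(x)-D^kw(y)|\le 2\sup_{U'}|D^kw|\le C r^{-\beta'}|x-y|^{\beta'}\sup_{U'}|D^kw|$. Together these give the stated estimate on $U'$. Since $U'$ is arbitrary, they also give $w\in\cC^{\gamma+2s}_{\rm loc}(U)$.

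The step that needs care is checking that the hypotheses of \cite{S07} match the ones here. There the solution is required to be merely bounded on $\R^n$, which is exactly our assumption $w\in L^\infty(\R^n)$. The equation should be understood in the pointwise or distributional sense used there. If only a global $\cC^\gamma$ right-hand side were allowed in that result, I would first localize: write $w=\eta w+(1-\eta)w$ with a cutoff $\eta\in C_c^\infty(B_{2r}(x_i))$ that equals $1$ on $B_{3r/2}(x_i)$. On $B_r(x_i)$ the term $(-\Delta)^s((1-\eta)w)$ is given by a nonsingular integral. It is therefore smooth, with all derivatives bounded by $C\|w\|_{L^\infty}$, so it can be absorbed into the right-hand side.
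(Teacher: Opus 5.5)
Your overall route is the paper's route. The paper gives no argument beyond ``follows directly from \cite[Proposition 2.8]{S07}'', and your rescaling to $B_1$, covering of $\overline{U'}$, and patching of the top-order Hölder seminorm are all correct. The gap is in what you feed into that machinery. Proposition 2.8 of \cite{S07} is not the unit-ball interior estimate you quote. It is a global statement: it assumes $(-\Delta)^s w\in\cC^{\gamma}(\R^n)$ and $w\in L^\infty(\R^n)$, and concludes regularity on all of $\R^n$. So the localization is not an optional fallback; it is the whole content of the reduction, and the cutoff argument you sketch does not deliver it. After splitting $w=\eta w+(1-\eta)w$ you correctly note that $(-\Delta)^s((1-\eta)w)$ is smooth near $B_r(x_i)$. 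This only shows that $(-\Delta)^s(\eta w)$ is $\cC^\gamma$ inside $B_{3r/2}(x_i)$. The global result needs $(-\Delta)^s(\eta w)\in\cC^\gamma(\R^n)$, and on the transition region $\{0<\eta<1\}$ one has
\[
(-\Delta)^s(\eta w)(x)=\eta(x)f(x)+w(x)\,(-\Delta)^s\eta(x)-c_{n,s}\,\mathrm{p.v.}\!\int_{\R^n}\frac{(\eta(x)-\eta(y))(w(x)-w(y))}{|x-y|^{n+2s}}\,dy .
\]
The last two terms are only as regular as $w$ itself, which is merely bounded; indeed $\eta w$ need not even be continuous there. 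Controlling them requires the very Hölder regularity of $w$ you are trying to prove, so the hypothesis of \cite[Proposition 2.8]{S07} is never verified.

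Here is one way to close this using only \cite[Proposition 2.8]{S07}.
\begin{itemize}
\item Take $\chi\in C_c^\infty(B_{2r}(x_i))$ with $\chi=1$ on $B_{3r/2}(x_i)$. Construct a bounded $v$ with $(-\Delta)^s v=g$ on all of $\R^n$, where $g\in\cC^\gamma(\R^n)$ has compact support and $g=f$ on $B_{3r/2}(x_i)$.
\item For $n>2s$, take $g=\chi f$ and let $v$ be its Riesz potential.
\item For $n=1\le 2s$, take $g=\chi f-m\phi$, where $\phi$ is a smooth bump supported outside $\overline{B_{3r/2}(x_i)}$ with $\int\phi=1$, and $m=\int\chi f$. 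Then the convolution of $g$ with the fundamental solution ($c|x|^{2s-1}$, respectively $c\ln|x|$) stays bounded.
\item The global result gives $\|v\|_{\cC^{\gamma+2s}(\R^n)}\le C\|f\|_{\cC^\gamma(U'')}$.
\item The difference $w-v$ is bounded and $s$-harmonic in $B_{3r/2}(x_i)$. By the Poisson kernel of the ball it is smooth in $B_r(x_i)$, with every derivative bounded by $C\|w-v\|_{L^\infty(\R^n)}$.
\end{itemize}
Alternatively, cite a genuinely local interior estimate for bounded solutions, such as the one of Ros-Oton and Serra for stable operators. With either fix in place, your scaling and covering finish the proof. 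The paper's one-line citation glosses over this same localization step.
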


For $I_R=(-R,0)$ and $I_{\infty}=(-\infty,0)$, we define
\[\rho_R(x)= \mathrm{dist}(x,\partial I_R)=\min\{|x+R|, |x|\} \ \mbox{ and } \rho_\infty(x)=|x| \ \  \mbox{ for } x\in I_\infty.\]

\begin{lemma}\label{pr 2.1}
Suppose   $s\in(\frac12,1)$,     
$\gamma\in(0, 2s-1]$,  $c\geq 0$, $F\in \cC^\gamma_{\loc}(I_R)\cap L^\infty(I_R)$, and $w$ is a classical solution of 
 \begin{equation}\label{eq 1.1-f1}
 \left\{
\begin{array}{cll}
 (-\Delta)^s w-c w' =F  \quad
  &{\rm in}\  \, I_R, 
\\[2mm]
 \displaystyle  w=0 \quad  &{\rm in}\  \, \R\setminus I_R.         
 \end{array}
 \right.
 \end{equation}
Then   
 \begin{equation}\label{reg 1.1}
\|w\|_{\cC^s(\R)}
\le
C\big(  \|F\|_{L^\infty(I_R)} +\|w\|_{L^\infty(\R)}
\big), 
\end{equation}
 and for  $\cI \Subset  \mathcal{J} \Subset   I_R$,   there exists $C=C(\cI,\cJ)>0$ independent of $R$ such that 
 \begin{equation}\label{reg 1.2}
\|w\|_{\cC^{2s}(\cI)} \le C\big( \|w\|_{L^\infty (I_R)}+\|F\|_{L^\infty(\cJ)}  \big)
\end{equation}
and for $\gamma\in(0,2s-1]$  there exists $C=C(\gamma,\cI,\cJ)>0$ independent of $R$ such that 
 \begin{equation}\label{reg 1.3}
\|w\|_{\cC^{2s+\gamma}(\cI)} \le C\big( \|w\|_{L^\infty (I_R)}+\|F\|_{L^\infty(I_R)}+\|F\|_{\cC^\gamma( \mathcal{J})}  \big). 
\end{equation}
\end{lemma}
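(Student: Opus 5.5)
The idea is to treat the drift term $c w'$ as a lower-order perturbation, which is legitimate because $s>1/2$ makes $(-\Delta)^s$ of order $2s>1$. We then reduce \eqref{eq 1.1-f1} to the pure fractional Dirichlet problem $(-\Delta)^s w = F + c w'$ in $I_R$, $w=0$ outside $I_R$. The main obstacle is that $w'$ is not a priori bounded up to $\partial I_R$. Since $w\sim\rho_R^s$ near the boundary, $w'\sim \rho_R^{s-1}$ blows up there. The right-hand side is therefore only in a weighted space, and we must close the estimate with a bootstrap that keeps all constants independent of $R$.

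\textbf{Step 1: the global bound \eqref{reg 1.1}.} We build a barrier of the form $\phi(x)=A\big(\rho_R(x)^s\wedge 1\big)$, modified near each endpoint using the one-sided functions $(x+R)_+^s$ and $(-x)_+^s$. These satisfy $(-\Delta)^s (x_+^s)=0$ on the half-line, and for $c\ge0$ we have $-c\,\partial_x(-x)_+^s\ge0$ near $0$. The other endpoint, where the drift has the wrong sign, needs an extra term, for instance $(x+R)_+^{s}-(x+R)_+^{s+\epsilon}$ with a small $\epsilon$. That term gives $(-\Delta)^s\gtrsim \rho^{\epsilon-s}$, which dominates $c\rho^{s-1}$ because $\epsilon-s<s-1$ holds once $2s>1+\epsilon$. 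Comparison, which is valid for $(-\Delta)^s - c\partial_x$ on bounded domains, then gives
\[
|w|\le C\big(\|F\|_{L^\infty}+\|w\|_{L^\infty}\big)\rho_R^s
\]
near $\partial I_R$.

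To get interior Hölder control we rescale on balls $B_r(x_0)$ with $r=\rho_R(x_0)/2$. After scaling, the equation becomes $(-\Delta)^s v - c r^{2s-1}v' = r^{2s}F$. The coefficient $r^{2s-1}$ is large when $r$ is large, so we handle that case separately, using the unscaled interior estimate with $\rho_R\ge1$. For $r\le1$ we apply the interior estimates of \cite{FR2017}, which treat drift terms of lower order than $2s$, and obtain $[w]_{C^{s}(B_r)}\le C(\|F\|+\|w\|)$. Combining this with the boundary decay in the standard way, as in Ros-Oton–Serra, yields the global $\cC^s$ bound \eqref{reg 1.1}. No constant depends on $R$, because everything is local at unit scale.

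\textbf{Step 2: interior estimates \eqref{reg 1.2}--\eqref{reg 1.3}.} Fix $\cI\Subset\cJ\Subset I_R$ and take a cutoff $\eta$ equal to $1$ on an intermediate interval. On $\cJ$ we rewrite the equation as $(-\Delta)^s w = F + c w'$. First we bound $w'$ locally: interpolating with the $\cC^s$ bound and absorbing the term as in a Schauder argument (or using the drift-included estimate of \cite{FR2017} directly) gives $w\in \cC^{2s-\delta}$ on an intermediate interval, hence $w'\in \cC^{2s-1-\delta}$. Lemma \ref{Teo-r} with $f=F+cw'$ then gives \eqref{reg 1.2}, where we use $\|w\|_{L^\infty(\R)}=\|w\|_{L^\infty(I_R)}$. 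Bootstrapping once more, now with $F\in\cC^\gamma(\cJ)$ and $w'\in\cC^{2s-1}\supset\cC^\gamma$ for $\gamma\le 2s-1$, Lemma \ref{Teo-r} gives \eqref{reg 1.3}. This application requires $\gamma+2s\notin\mathbb N$; when $\gamma+2s=2$ we use a slightly smaller $\gamma$ in the intermediate step and then conclude. The constants depend only on $\cI,\cJ,\gamma$, because Lemma \ref{Teo-r} is local and the tail of $w$ enters only through $\|w\|_{L^\infty}$.
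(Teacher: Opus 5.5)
\textbf{The gap: your derivation of \eqref{reg 1.2}.} Lemma \ref{Teo-r} needs a H\"older right-hand side, and its estimate contains $\|f\|_{\cC^\gamma(U'')}$. Applied with $f=F+cw'$, it gives a $\cC^{2s+\gamma'}$ bound whose right side involves $\|F\|_{\cC^{\gamma'}(\cJ)}$. But \eqref{reg 1.2} may only use $\|w\|_{L^\infty(I_R)}$ and $\|F\|_{L^\infty(\cJ)}$.

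With a merely bounded right-hand side, Silvestre's companion result in \cite{S07} and the $L^\infty$ case of \cite{FR2017} give only $\cC^{2s-\delta}$. That is exactly what your first sub-step produces. Reaching the endpoint $\cC^{2s}=C^{1,2s-1}$ from $L^\infty$ data is a genuinely sharper statement. It holds because $2s\notin\mathbb N$; compare the failure of $C^{1,1}$ bounds when $\Delta u\in L^\infty$.

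The missing ingredient is the sharp interior estimate $\|v\|_{\cC^{2s}(B_{1/2})}\le C(\|v\|_{L^\infty(\R)}+\|g\|_{L^\infty(B_1)})$ for $(-\Delta)^s v=g$ in $B_1$, $s\neq\frac12$ (Ros-Oton--Serra), or directly its drift version \cite[Lemma 6.3]{K21}, which is what the paper quotes. Once you invoke it, your argument closes. Since $2s-\delta>1$, your first sub-step bounds $\|w'\|_{L^\infty}$ on an intermediate interval by $C(\|w\|_{L^\infty(I_R)}+\|F\|_{L^\infty(\cJ)})$. So $F+cw'$ is bounded there, and the sharp estimate gives \eqref{reg 1.2}.

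\textbf{Comparison with the paper and smaller points.} The paper proves neither \eqref{reg 1.1} nor \eqref{reg 1.2}; both are quoted from \cite[Proposition 6.6, Lemma 6.3]{K21}. It obtains \eqref{reg 1.3} exactly as in your final bootstrap. Your barrier-plus-rescaling route to \eqref{reg 1.1} is a reasonable self-contained substitute, and your signs at the two endpoints and the requirement $\epsilon<2s-1$ are right. However, \cite{FR2017} treats parabolic equations for stable operators without any drift term. You can still use it through $u(t,x):=w(x-ct)$, which solves $\partial_t u+(-\Delta)^s u=F(x-ct)$, or you can cite a drift result such as \cite{K21}.

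Two further points:
\begin{itemize}
\item For $\gamma\le 2s-1$ the inclusion is $\cC^{2s-1}\subset\cC^\gamma$, not $\supset$.
\item When $\gamma+2s=2$, lowering $\gamma$ cannot produce a $\cC^2=C^{1,1}$ bound. Such a bound fails in general there, e.g.\ for a localized $x^2\ln|x|$, so that endpoint has to be excluded rather than ``concluded''. The paper's proof goes through Lemma \ref{Teo-r} and has the same restriction; later it only uses $\gamma<2-2s$.
\end{itemize}
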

\begin{proof} Note that (\ref{reg 1.1}) and (\ref{reg 1.2})
 follow from \cite[Proposition 6.6, Lemma 6.3]{K21} respectively.

To show (\ref{reg 1.3}), we write (\ref{eq 1.1-f1}) as 
$$(-\Delta)^s w=c w' +F\quad {\rm in}\ \, I_R,\quad w=0\quad {\rm in}\ \R\setminus I_R. $$
By (\ref{reg 1.2}), 
$c w' +F\in \cC^\gamma_{\loc}(I_R)$. 
 Thus using Lemma \ref{Teo-r} and (\ref{reg 1.1}) we obtain
\begin{align*} 
\|w\|_{\cC^{\gamma+2s}(\cI)} 
&\le C\big(\|w\|_{L^\infty(\R)}+ \|c w' +F\|_{C^\gamma(\mathcal{J})} \big)
\\[1mm]&  \leq C\big( \|w\|_{L^\infty(\R)}+c \| w \|_{\cC^{1+\gamma}(\mathcal{J})} +\|F\|_{\cC^\gamma(\mathcal{J})} 
\big)
\\[1mm]&  \leq C\big( \|w\|_{L^\infty(\R)}+c \| w \|_{\cC^{2s}(\mathcal{J})} +\|F\|_{\cC^\gamma(\mathcal{J})} \big)
\\[1mm]&  \leq C\big( \|w\|_{L^\infty (I_R)}+\|F\|_{L^\infty(I_R)}+\|F\|_{\cC^\gamma( \mathcal{J})}  \big),
\end{align*}
which ends the proof. 
\end{proof}

\medskip

\noindent
{\bf Proof of Proposition \ref{prop-1}:} 

\underline{\it Part 1: Existence}.  Let  $I_R=(-R,0)$ with $R>1$.  Take $\phi_0 \in \cC^3(\R)$ such that $\phi_0$ is non-increasing, 
  $\phi_0=1$ in $(-\infty,-1]$ and $\phi_0=\sigma$ in $[0,+\infty)$. Then 
  $(-\Delta)^s \phi_0-c \phi_0' \in \cC^1(\R)$ and it  is  bounded in $\R$.  

Note that  $\psi=\phi_0+w_0$ is a  solution of   (\ref{eq 1.1-eR}) if 
 $w_0$ is a solution of the problem
 \begin{equation}\label{eq 1.1-e1}
 \left\{
\begin{array}{lll}
 (-\Delta)^s w-c_0 w' =(-\Delta)^s \phi_0-c_0 \phi_0' + f_+(\phi_0+w)  \quad
  &{\rm in}\  \, I_R, 
\\[2mm]
  \phantom{ ------\    }
 \displaystyle  w=0 \quad  &{\rm in}\  \, \R\setminus I_R,        
 \end{array}
 \right.
 \end{equation}
where $f_+=\max\{0,f\}$.

For $s\in (\frac12, 1)$, let  $G_{s,c}(x,y)$ denote the Green function of $ (-\Delta)^s  -c \frac{d}{dx}$  in $I_R$ subject to  zero Dirichlet boundary conditions (\cite{GJZ}), 
and 
\[
\cG_{s,c}[\phi](x):=\int_{I_R}G_{s,c}(x,y)\phi(y)dy,
\]
then
 we can transform \eqref{eq 1.1-e1} into a fixed point problem, and then show the existence of a solution to (\ref{eq 1.1-e1}). 
Indeed, denote 
$$\bT_0 w= \cG_{s,c}[f_+(\phi_0+w)]+ \cG_{s,c}[(-\Delta)^s \phi_0-c \phi_0' ]\quad{\rm for}\, \  w\in \bX_0,$$
 where
 $$\bX_0:=\big\{v\in C_0(I_R)  \!:  \|v\|_{L^\infty(\R)}\leq r_0  \big\}  $$  
and   
 $$r_0:=    \big\| \cG_{s,c}[1] \big\|_{L^\infty(I_R)}\sup_{\tau>0}f_+(\tau)  +\big\|\cG_{s,c}[(-\Delta)^s \phi_0-c \phi_0' ]\big\|_{L^\infty(I_R)}. $$
 Then 
$$\bT_0(\bX_0)\subset  \bX_0,  $$
since for $\phi\in \bX_0$, 
\begin{align*}
\|\bT_0\phi\|_{L^\infty(\R)}&=   \Big\| \cG_{s,c}[ f_+(\phi_0+\phi)]+ \cG_{s,c}[(-\Delta)^s \phi_0-c \phi_0' ]     \Big\|_{L^\infty(\R)}
   \leq r_0.
\end{align*}
It is also easily seen that $\bT_0$ is continuous on $\bX_0$.
By Proposition \ref{pr 2.1},
\begin{align*}
\|\bT_0\phi\|_{C^s(\R)}& \leq C \Big(   \big\|(-\Delta)^s \phi_0-c \phi_0' + f_+(\phi_0+w) \big\|_{L^\infty(I_R)} +\|\phi\|_{L^\infty(\R)}
\big)  \Big)
\\[1mm]&\leq   C' \Big( \|\phi_0\|_{C^2(\R)} + \sup_{\tau>0}f_+(\tau)+r_0  \Big), 
\end{align*}
 and so it follows  from   the Arzel\'a–Ascoli theorem  that 
$\bT_0(\bX_0)$ is a compact subset of $ \bX_0$.  
We may now apply the Schauder fixed point theorem to conclude that there exists $w_0\in \bX_0$ such that 
$$w_0=\bT_0 w_0=\cG_{s,c_0}[f_+(\phi_0+w_0)]+ \cG_{s,c_0}[(-\Delta)^s \phi_0-c_0 \phi_0' ]. $$
Furthermore,  let   $\gamma\in(0,\min\{2s-1,2-2s\})$ and  $\tilde \gamma:=\gamma+2s-1\in(\gamma,1)$, then for  $\cI \Subset  \mathcal{J} \Subset   I_R$, we 
have 
\begin{align*}
\|\bT_0w_0\|_{\cC^{1,\tilde\gamma} (\cI)}&  \leq  C\Big( \|w_0\|_{L^\infty (I_R)}+\|f_+(\phi_0+w_0)]+  (-\Delta)^s \phi_0-c_0 \phi_0' \|_{L^\infty(I_R)}
\\[1mm]&\qquad+\|f_+(\phi_0+w_0)]+  (-\Delta)^s \phi_0-c \phi_0' \|_{C^\gamma( \mathcal{J})}  \Big)  
\\[1mm]&\leq  C'\Big( r_0+\sup_{\tau>0}f_+(\tau) + \| \phi_0\|_{\cC^2(\R)}+\sup_{\tau>0}|f_+'(\tau)| \|w_0\|_{\cC^\gamma(I_R)}\Big)
\\[1mm]&\leq  C'\Big( r_0+\sup_{\tau>0}f_+(\tau) + \| \phi_0\|_{\cC^2(\R)}+\sup_{\tau>0}|f_+'(\tau)| \|w_0\|_{\cC^s(I_R)}\Big)
\\[1mm]&\leq  C' \Big( r_0+\sup_{\tau>0}f_+(\tau) + \| \phi_0\|_{\cC^2(\R)}+\sup_{\tau>0}|f_+'(\tau)| \big( \|\phi_0\|_{\cC^2(\R)} + \sup_{\tau>0}f_+(\tau)+r_0  \big) \Big)
\\[1mm]&\leq  C' \Big( (\sup_{\tau>0}|f_+'(\tau)|+1)(r_0+\| \phi_0\|_{\cC^2(\R)})+\sup_{\tau>0}f_+(\tau) 
 +\sup_{\tau>0}|f_+'(\tau)|   \sup_{\tau>0}f_+(\tau) \Big), 
\end{align*}
and hence
 $w_0\in C_{\loc}^{1,\tilde\gamma} (I_R)\cap C^s_0(I_R)$, so  $w_0$ is a classical solution of (\ref{eq 1.1-e1}).  By \cite{GJZ}, $$G_{s}(x,y)\leq C\rho_R(x)^s|x-y|^{s-1}\quad \text{in $I_R\times I_R$,} $$ 
and so from the definition of $\bT_0$ we obtain
$$w_0(x)=(\bT w_0)(x)\leq C\rho_R(x)^s \mbox{ for } x\in I_R, $$
where $\rho_R(x)=d(x, \partial I_R)=\min\{|x+R|, |x|\}$, and $C$ is some positive constant independent of $R>1$.
 As a consequence, we see that 
 $\psi_{\sigma,R}:=w_0+\phi_0$ is a classical solution of 
$$
 \left\{
\begin{array}{cll}
 (-\Delta)^s w-c_0 w' = f_+(w)  \quad
  &{\rm in}\  \, (-R,0),  
\\[2mm]
 \displaystyle  w=1 \quad  &{\rm in}\  \, (-\infty,-R], \\[2mm]
 \displaystyle  w=\sigma \quad  &{\rm in}\  \, [0,+\infty).        
 \end{array}
 \right.
 $$
It follows from the maximum principle that 
 $$\sigma\leq  \psi_{\sigma,R} \leq 1\quad {\rm in}\ \R, $$
 which implies that $\psi_{\sigma,R}$ is a classical solution of  (\ref{eq 1.1-eR}). Moreover, \eqref{rho^s} holds. \medskip

 \underline{\it Part 2: Monotonicity,  uniqueness and \eqref{eq 1.1-e0}. } First we claim that $\sigma<\psi_{\sigma,R}<1$ in $I_R$. 
  If there exists $x_0\in I_R$ such that $\psi_{\sigma,R}(x_0)=\max_{\R} \psi_{\sigma,R}\geq 1$,
 then 
 $$ 0\geq f(\psi_{\sigma,R}(x_0))=  (-\Delta)^s \psi_{\sigma,R}(x_0)-c_0 \psi_{\sigma,R}' (x_0)=(-\Delta)^s \psi_{\sigma,R}(x_0)>0,$$ 
 which is impossible. 
 A similar contradiction arises if there exists $y_0\in I_R$ such that $\psi_{\sigma,R}(y_0)=\min_{\R} \psi_{\sigma,R}\leq \sigma$. 

Secondly,  we show that $\psi_{\sigma,R}(x)$ is non-increasing in $x$.  If not,  there exits a point $z_0\in I_R$ such that for some small $\delta_0>0$, 
$$\psi_{\sigma,R}(x)< \psi_{\sigma,R}(z_0) \quad {\rm for}\ \, x\in (z_0-\delta_0,z_0)$$

For $\tau\geq 0$, we define
$$w_\tau(x)=\psi_{\sigma,R}(x+\tau)\quad\text{ in $\R$}; $$ 
then 
\begin{equation}\label{eq 1.1-e2}
 \left\{
\begin{array}{lll}
 (-\Delta)^s w_\tau-c_0 w_\tau' = f(w_\tau)  \quad
  &{\rm in}\  \, (-R-\tau,-\tau), 
\\[2mm]
 \phantom{ ------   }
 \displaystyle  w=1 \quad  &{\rm in}\  \, (-\infty,-R-\tau], \\[2mm]
 \phantom{ ------   }
 \displaystyle  w=0 \quad  &{\rm in}\  \, [-\tau,+\infty).      
 \end{array}
 \right.
 \end{equation}
When $\tau\geq  R$, it is obvious that 
\begin{equation}\label{eq 1.1-e3}
w_\tau=\sigma< \psi_{\sigma,R}\quad {\rm in}\  I_R. 
 \end{equation}
So the  graph of $w=w_\tau(x)$ is below that of $w=\psi_{\sigma, R}(x)$ in the $x-w$ plane when $\tau\geq R$. As we decrease the value of $\tau$ from $R$ (i.e., we slide the graph of $w_\tau$ rightwards),
it is easily seen from the existence of the above point $z_0$ that there will be a first $\tau$ value less than $R$ but greater than $\delta_0$, say $\tau=\tau_0\in (\delta_0, R)$, such that 
the graph of $w=w_\tau(x)$ lies below that of $w=\psi_{\sigma, R}(x)$ for $\tau>\tau_0$, but the two graphs touch at some point $x_0\in I_R$ when $\tau=\tau_0$,
 i.e.,
\begin{equation}\label{eq 1.1-e4} 
w_{\tau_0}(x_0)=\psi_{\sigma,R}(x_0), \quad  w_{\tau_0}(x) \leq \psi_{\sigma,R}(x)\ \, {\rm for}\ x\in\R.  
 \end{equation}
Let $v_{\tau_0}=\psi_{\sigma,R}-w_{\tau_0}$; then $x_0$ is a global minimum point of $v_{\tau_0}\not\equiv 0$, which leads to
\begin{align} \label{eq 1.1-e4-1-1}
 0=f(\psi_{\sigma,R}(x_0))-f(w_{\tau_0}(x_0))=(-\Delta)^s v_{\tau_0}(x_0)-c_0 v_{\tau_0}'=(-\Delta)^s v_{\tau_0}(x_0)<0,
\end{align} 
a contradiction. 
This proves the non-increasing monotonicity of $\psi_{\sigma, R}(x)$ in $x$. 
\smallskip

Next we show the uniqueness of the solution to \eqref{eq 1.1-eR}.  Suppose  that $u_1$ is an arbitrary  solution of (\ref{eq 1.1-eR}). 
We first show that 
$$u_1\leq \psi_{\sigma,R}\quad {\rm in}\ I_R. $$
We again use the sliding method. Set
$$w_\tau(x)=u_1(x+\tau)\quad\text{ in $\R$}; $$ 
and then continuously reduce the value of $\tau$ from $\tau=R$ to $\tau=0$. Clearly $w_\tau\leq \psi_{\sigma, R}$ for $\tau=R$. If this holds for all $\tau\in [0, R]$, then the desired inequality is proved. Otherwise,
there exists $\tau_0\in (0, R)$ such that  
$$w_{\tau_0}\leq  \psi_{\sigma,R}\quad {\rm in}\ I_R \quad \mbox{ and for some }\ x_0\in I_R,\   w_{\tau_0}(x_0)= \psi_{\sigma,R}(x_0),$$
which lead to  a contradiction at $x_0$ as in (\ref{eq 1.1-e4-1-1}). Hence $u_1\leq \psi_{\sigma,R}\quad {\rm in}\ I_R$. Analogously we can slide $\psi_{\sigma, R}(x)$ to show 
$u_1\geq \psi_{\sigma,R}\quad {\rm in}\ I_R$. Thus $u_1\equiv \psi_{\sigma,R}\quad {\rm in}\ I_R$. This proves the desired uniqueness conclusion.

\smallskip

We now show (\ref{eq 1.1-e0}).  For $R_1>R_2>0$, we let 
$$w_\tau(x)=\psi_{\sigma, R_1}(x+\tau)\quad\text{ in $\R$}.  $$
Then for $\tau\geq R_2$, 
$$
w_{\tau}(x)<\psi_{\sigma, R_2}(x)\ \  {\rm in}\ \,  I_{R_2}.    
$$
We can show that the above inequality remains valid for all $\tau\in [0, R_2]$, for otherwise, the same contradiction arises.  

Finally, we prove (\ref{eq 1.1-e00}), again by a sliding argument.  For $\sigma_2>\sigma_1>0$,  let 
$$w_\tau(x)=\psi_{\sigma_1, R}(x+\tau)\quad\text{ in $\R$};  $$
then for $\tau\geq R$, 
$$
w_{\tau}(z)<\psi_{\sigma_2, R}(z)\ \  {\rm in}\ \,  I_{R}.    
$$
We now reduce the value of $\tau$ continuously from $\tau=R$ to  $\tau=0$, and claim the above inequality stays valid. Otherwise, we can deduce the same contradiction as above.  
  \qed

\subsection{The perturbation problem \eqref{eq 1.1-e}} 

In this subsection, we solve the  perturbation problem \eqref{eq 1.1-e}.  We say $w\in C(\R)$ is a maximal solution of  	(\ref{eq 1.1-e})  if  
$$\mbox{$w$ solves \eqref{eq 1.1-e} and } \ w\geq v\ {\rm in}\ \R\  \text{for any solution $v$ of 	(\ref{eq 1.1-e})}. $$		 

  \begin{proposition}\label{prop-2}
Assume that  
$s\in(\frac12,1),\  c\geq 0,\ \sigma\in(0, 1/4]$ and $\bf (f1)$ is satisfied.  Then \eqref{eq 1.1-e} has a maximal solution $w=\psi_{\sigma}$ which  is strictly decreasing in $I_\infty:=(-\infty,0)$.   
  Moreover,  
 \begin{equation}\label{eq 1.1-f0}
  \psi_{\sigma_1}(x)< \psi_{\sigma_2}(x) \ \ \text{for $x\in I_{\infty}$   if $0< \sigma_1<\sigma_2\leq 1/4$.}
   \end{equation}
 \end{proposition}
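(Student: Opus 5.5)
The plan is to obtain $\psi_\sigma$ as the monotone limit of the truncated solutions $\psi_{\sigma,R}$ from Proposition \ref{prop-1} as $R\to\infty$. By \eqref{eq 1.1-e0}, for fixed $\sigma$ and each fixed $x<0$, the map $R\mapsto \psi_{\sigma,R}(x)$ is decreasing (for $R>|x|$), and $\sigma\le\psi_{\sigma,R}\le 1$. Hence the pointwise limit
\[
\psi_\sigma(x):=\lim_{R\to\infty}\psi_{\sigma,R}(x)
\]
exists, with $\psi_\sigma=\sigma$ on $[0,\infty)$ and $\sigma\le\psi_\sigma\le1$. To pass to the limit in the equation, I would write $w=\psi_{\sigma,R}-\sigma$ (or subtract a fixed smooth cutoff, as in Part 1 of the proof of Proposition \ref{prop-1}) and apply Lemma \ref{pr 2.1} on compact subintervals $\cI\Subset\cJ\Subset(-\infty,0)$. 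The constants there are independent of $R$, and the right-hand side $f(\psi_{\sigma,R})$ is bounded in $L^\infty$ and, after one bootstrap, in $\cC^\gamma(\cJ)$. This gives uniform $\cC^{2s+\gamma}(\cI)$ bounds. Together with the uniform global $\cC^s$ bound \eqref{reg 1.1} and the boundary estimate \eqref{rho^s}, a diagonal Arzel\`a--Ascoli argument gives local $C^1$ convergence and continuity of $\psi_\sigma$ up to $x=0$. For the nonlocal term at a fixed $x<0$, the tail integrals converge by dominated convergence, since everything is bounded by $1$ and $|x-y|^{-1-2s}$ is integrable away from $x$. Near $x$, the uniform $\cC^{2s+\gamma}$ bounds control the singular part. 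So $\psi_\sigma$ solves the equation in $(-\infty,0)$. Monotonicity (non-increasing) is inherited in the limit.

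The key step is $\psi_\sigma(-\infty)=1$. The limit $L:=\psi_\sigma(-\infty)$ exists by monotonicity and satisfies $L\ge\sigma>0$. I would argue via translates: set $v_n(x)=\psi_\sigma(x-n)$. By the same compactness, $v_n$ converges locally in $C^1$ to the constant $L$, and the equation passes to the limit on all of $\R$. Since the fractional Laplacian and the derivative of a constant vanish, this forces $f(L)=0$, so $L\in\{0,1\}$. As $L\ge\sigma>0$, we get $L=1$. One point needs care: the translated equation holds on $(-\infty,n)$, so the limit equation holds on all of $\R$, and dominated convergence still applies because all functions lie in $[0,1]$.

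Maximality comes from a sliding comparison with each $\psi_{\sigma,R}$. Let $v$ be any solution of \eqref{eq 1.1-e}. Then $v\le1=\psi_{\sigma,R}$ on $(-\infty,-R]$ and $v=\sigma=\psi_{\sigma,R}$ on $[0,\infty)$. I would show $v\le\psi_{\sigma,R}$ on $I_R$ by the sliding argument of Part 2 of the proof of Proposition \ref{prop-1}: slide $w_\tau(x)=v(x+\tau)$ from $\tau=R$ down to $0$. At a first interior touching point, $\psi_{\sigma,R}-w_\tau\ge0$ attains a zero minimum without vanishing identically, and the same contradiction as in \eqref{eq 1.1-e4-1-1} arises. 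Touching cannot occur at the boundary because of the strict inequalities $\sigma<\psi_{\sigma,R}<1$ inside $I_R$ and $v<1$ on $\R$. The bound $v<1$ holds since, at a point where $v=1$ is an interior maximum, $(-\Delta)^s v>0$ while $f(1)=0$, which is impossible. Letting $R\to\infty$ gives $v\le\psi_\sigma$.

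Strict decrease and \eqref{eq 1.1-f0} need a strong maximum principle, and this is the obstacle I expect to be the main difficulty. Passing to the limit in \eqref{eq 1.1-e00} only gives $\psi_{\sigma_1}\le\psi_{\sigma_2}$. For strictness, suppose $\psi_{\sigma_1}(x_0)=\psi_{\sigma_2}(x_0)$ at some $x_0<0$. Then $x_0$ is a global minimum of $z=\psi_{\sigma_2}-\psi_{\sigma_1}\ge0$, and $z>0$ on $[0,\infty)$. Hence $(-\Delta)^s z(x_0)<0$ and $z'(x_0)=0$, while the equation gives $0=f(\psi_{\sigma_2})-f(\psi_{\sigma_1})$ at $x_0$, a contradiction. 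Strict monotonicity in $x$ follows the same way. If $\psi_\sigma(x_1)=\psi_\sigma(x_2)$ with $x_1<x_2<0$, then $\psi_\sigma$ is constant on $[x_1,x_2]$. Comparing $\psi_\sigma$ with its translate $\psi_\sigma(\cdot+\delta)$ for small $\delta>0$ gives a nonnegative difference vanishing at an interior point but not identically, and the same nonlocal contradiction applies.
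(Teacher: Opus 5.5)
Your proposal is correct and follows the paper's proof essentially step by step: the monotone limit of $\psi_{\sigma,R}$ with $R$-independent interior estimates plus \eqref{rho^s} for continuity at $0$; translates $\psi_\sigma(\cdot-n)$ to force $f(\psi_\sigma(-\infty))=0$; sliding against each $\psi_{\sigma,R}$ for maximality; and the nonlocal strong maximum principle at a touching point for strict decrease in $x$ and for \eqref{eq 1.1-f0}. The only cosmetic differences are that you slide the arbitrary solution $v$ rather than $\psi_{\sigma,R}$ in the maximality step, which makes the no-boundary-touching bookkeeping via $v<1$ explicit, and that you subtract a fixed cutoff so Lemma \ref{pr 2.1} is applied to a function vanishing outside $I_R$.
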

\begin{proof}  Let 
$\psi_{\sigma,R}$ be the unique solution of (\ref{eq 1.1-eR}).  Since the mapping $R\mapsto \psi_{\sigma, R}$ is non-increasing and   $\psi_{\sigma,R}[\R]= [\sigma,1]$,  the limit 
$$\psi_{\sigma}(x)=\lim_{R\to+\infty}\psi_{\sigma,R}(x)\ \mbox{ exists for every $x$ in}\ \, \R.  $$
Moreover,
\begin{equation}\label{est-0}
\psi_{\sigma}(\R)\subset [\sigma,1],\qquad  \psi_{\sigma}\leq \psi_{\sigma,R}\ \ \text{in $\R$ for each $R>1,$}
\end{equation}
and
$\psi_{\sigma}(x)$ is non-increasing in $\R$. 
For clarity, we divide the proof below into several steps.\smallskip

{\bf Step 1:} We show that $\psi_{\sigma}$ is a solution of  (\ref{eq 1.1-e}).

It follows from Proposition \ref{pr 2.1} that 
 for any $\cI\Subset \cI_1\Subset\cI_2\Subset (-\infty,0)$ there exists $C=C(\cI_1,\cI_2)>0$, independent of $R$,      such that, whenever $\cI_2\Subset I_R$, we have 
\begin{align*}
\|\psi_{\sigma, R} \|_{\cC^{2s} (\cI_1)}   &\leq C\big( \|\psi_{\sigma, R}\|_{L^\infty (\R)}+\|f(\psi_{\sigma, R})\|_{L^\infty(\R)}  \big)  
\\[1mm]&\leq C\big(  1+\sup_{\tau>0}f_+(\tau)\big),
\end{align*}
and 
\begin{align*}
\|\psi_{\sigma,R} \|_{\cC^{2s+\gamma} (\cI)}   &\leq C\big( \|\psi_{\sigma, R}\|_{L^\infty (\R)}+\|f(\psi_{\sigma, R})\|_{L^\infty(\R)} + \|f(\psi_{\sigma, R}) \|_{C^\gamma( \mathcal{I}_1)} \big)  
\\[1mm]&\leq C\big(  1+\sup_{\tau>0}f_+(\tau) +L \|\psi_{\sigma, R} \|_{\cC^{\gamma} (\cI_1)}   \big)
\\[1mm]&\leq C\big(  1+\sup_{\tau>0}f_+(\tau) +L \|\psi_{\sigma, R} \|_{\cC^{2s} (\cI_1)}   \big)
\\[1mm]&\leq C\Big(  1+\sup_{\tau>0}f_+(\tau)+L\big(1+\sup_{\tau>0}f_+(\tau)\big)\Big), 
\end{align*} 
where $L$ is the Lipschitz constant of $f|_{[0, 1]}$.
By (\ref{est-0}) and \eqref{rho^s} we further derive that for some $C>0$, 
$$0\leq \psi_{\sigma} (x)- \sigma \leq C|x|^s\quad {\rm for}\ \,x\in[-1,0].$$
Hence $\psi_{\sigma}\in C(\R)$ and  is a classical solution of  
\begin{equation}\label{eq 1.1-ell-inf-0}
 \left\{
\begin{array}{lll}
 (-\Delta)^s w-c w' = f(w)  \quad
  &{\rm in}\  \, (-\infty,0), 
  \\[2mm]
 \phantom{ ----\   }
 \sigma\leq w\leq 1 \quad
  &{\rm in}\  \, \R, 
\\[2mm]
 \phantom{ ------\,   }
 \displaystyle  w=\sigma \quad  &{\rm in}\  \, [0,+\infty). 
  \end{array}
 \right.
 \end{equation}
 
 We show next that
 $$  \lim_{x\to-\infty}\psi_{\sigma}(x)=1.$$
 By the monotonicity of $\psi_{\sigma}(x)$ in $x$,  the limit $\lim_{|x|\to+\infty}\psi_{\sigma}(x)=\tau_0\in[\sigma,1]$ always exists.
 Now we set 
 $$u_n(x):=\psi_{\sigma}(x-n),$$
 and for any interval $I\subset \R$, let $n_0>0$ be such that $I-2n_0\subset (-\infty,0)$.
 Then as in the argument above we can use the estimates for $u_n$ as a solution of \eqref{eq 1.1-ell-inf-0} with $x=0$ replaced by $x= n$ 
 to see that
 the limit $u_n$, denoted by $u_\infty$,  is   a classical solution of 
 \begin{equation}\label{eq 1.1-ell-inf-1}
 \left\{
\begin{array}{lll}
 (-\Delta)^s w-c w' = f(w)  \quad
  &{\rm in}\  \, \R, 
  \\[2mm]
 \phantom{ ----\   }
 \sigma\leq w\leq 1 \quad
  &{\rm in}\  \, \R.
  \end{array}
 \right.
 \end{equation}
 On the other hand, we necessarily have 
 $u_\infty\equiv \tau_0\in [\sigma, 1]$, which implies from the above equation for $u_\infty$ that $f(\tau_0)=0$.
 We thus must have $\tau_0=1$.
 As a consequence, $\psi_{\sigma}$ is a solution of  (\ref{eq 1.1-e}). \smallskip

{\bf Step 2:}  We prove the strictly decreasing monotonicity of $\psi_\sigma(x)$ in $x\in (-\infty, 0]$.

For $\tau\geq 0$, set 
$v_\tau(x)=\psi_{\sigma}(x+\tau)\mbox{ for } x\in \R$.
By the non-increasing monotonicity, we can see that 
$v_\tau(x)\leq \psi_{\sigma}(x)\quad{\rm for \ } x\in \R$.
Let $w:=\psi_{\sigma}-v_\tau$ in $\R$;
then if for some $\tau>0$,  there exists $x_0\in(-\infty,0)$ such that 
$v_\tau(x_0)= \psi_{\sigma}(x_0)$, i.e., $w$ achieves its minimum at $x=x_0$, then 
$v_\tau'(x_0)=\psi_{\sigma}'(x_0)$ and 
$$ 0>(-\Delta)^s w(x_0)=f(\psi_{\sigma}(x_0))-f(v_\tau(x_0))=0, $$
which is impossible. This proves the strictly decreasing monotonicity of $\psi_\sigma(x)$.\smallskip

{\bf Step 3:}  We show that $\psi_{\sigma}$ is the maximal solution. 

Let $\tilde u$ be a solution of (\ref{eq 1.1-e}). 
 We only need to show for each $R>1$, $\psi_{\sigma,R}\geq \tilde u$ in $\R$.   We use the sliding method again. 
 Set $\bar u_\tau(x)=\psi_{\sigma,R}(x-\tau)$ for $\tau>0$; then clearly $\bar u_\tau> \tilde u$ in $(-\infty,0)$ for $\tau\geq R$.
 Now we decrease $\tau$ continuously towards zero. If there is $\tau_0\in(0,R)$ such that for $\tau >\tau_0$,
 $\bar u_\tau> \tilde u$ in  $(-\infty,0)$, but  $\bar u_{\tau_0}(x)\geq \tilde u(x)$ in  $(-\infty,0)$ and equality holds at some $x=x_0\in (-\infty, 0)$, then  as before  a contradiction appears when evaluating the equation satisfied by $\bar w:=\bar u_{\tau_0}(x)-\tilde u$ 
 at $x=x_0$. Thus $\bar u_\tau> \tilde u$ in  $(-\infty,0)$ for all $\tau\in (0, R]$, and hence  $\tilde u\leq \psi_{\sigma,R}$ in $\R$. Letting   $R\to+\infty$, we  get $\tilde u\leq\psi_{\sigma}$ in $\R$, as desired. 
 \smallskip

  {\bf Step 4:}  We prove (\ref{eq 1.1-f0}). 
  
   Letting $R\to\infty$ in (\ref{eq 1.1-e00}) we obtain
\begin{equation}\label{eq 1.1-f00}
  \psi_{\sigma_1}(x)\leq  \psi_{\sigma_2}(x) \ \ \text{in $I_{\infty}$   if $0\leq \sigma_1<\sigma_2\leq 1/4$.}
   \end{equation}
If there exists a point $x_0\in\R$ such that $\psi_{\sigma_1}(x_0)=  \psi_{\sigma_2}(x_0)$, then necessarily $x_0<0$ and
 we get a contradiction by evaluating the equation satisfied by $ w:=\psi_{\sigma_1}- \psi_{\sigma_2}$ 
 at $x=x_0$.     
\end{proof}

\subsection{ Proof of Theorem \ref{teo ell} } 

For clarity we divide the proof into several steps.

{\bf Step 1:} We prove the existence of a solution to \eqref{eq 1.1-ell}.

We will use the maximal solution
$\psi_{\sigma}$  of (\ref{eq 1.1-e})  obtained in Proposition \ref{prop-2}.  Since $\psi_{\sigma}(\R)= [\sigma,1]$ with $\sigma\in(0,\frac14]$.  By the monotonicity of $\psi_\sigma(x)$, for each $\sigma\in (0, 1/4]$, 
there exists a unique
  $x_\sigma\in(-\infty,0)$ such that 
 $$\psi_{\sigma}(x_\sigma)=\frac12, \ \mbox{ and }  x_{\sigma_1}< x_{\sigma_2}<0\quad {\rm if}\ 0<\sigma_1<\sigma_2\leq 1/4.$$
Hence the following limit always exists:
   $$\lim_{\sigma\to0^+}x_{\sigma}= \bar x\in[-\infty,x_{\frac14}).  $$

 Now we set 
 $$\phi_{\sigma}(x):= \psi_{\sigma}(x+x_\sigma);$$
 then $w=\phi_\sigma$ satisfies
$$
 \left\{
\begin{array}{lll}
 (-\Delta)^s w-c w' = f(w)  \quad
  &{\rm in}\  \, (-\infty,-x_\sigma), 
  \\[2mm]
 \phantom{ ----\   }
 \sigma\leq w\leq 1 \quad
  &{\rm in}\  \, \R, 
\\[2mm]
 \phantom{ ------\,   }
 \displaystyle  w=\sigma \quad  &{\rm in}\  \, [-x_\sigma,+\infty), \\[2mm]
 \phantom{ -\   }
 \displaystyle  w(0)=\frac12,\ \ w(-\infty)=1.
 \end{array}
 \right.
$$

  If $\bar x>-\infty$,   then
 for any $\cI\Subset \cI_1\Subset\cI_2\Subset (-\infty,-\bar x)$, there exists $\tilde \sigma$ such that 
 $$\cI_2\Subset (-\infty,- x_{\tilde \sigma}).  $$
  By Proposition \ref{pr 2.1},
  there exists $C=C(\cI_1,\cI_2)>0$,    independent of  
  $\sigma\in(0,\tilde \sigma]$, such that
\begin{align*}
\|\phi_{\sigma} \|_{\cC^{2s} (\cI_1)}   &\leq C\big( \|\phi_{\sigma}\|_{L^\infty (\R)}+\|f(\phi_{\sigma})\|_{L^\infty(\R)}  \big)  
 \leq C\big(  1+\sup_{\tau>0}f_+(\tau)\big),
\end{align*}
and 
\begin{align*}
\|\phi_{\sigma} \|_{\cC^{2s+\gamma} (\cI)}   &\leq C\big( \|\phi_{\sigma}\|_{L^\infty (\R)}+\|f(\phi_{\sigma})\|_{L^\infty(\R)} + \|f(\phi_{\sigma}) \|_{C^\gamma( \mathcal{I}_1)} \big)  
 \\[1mm]&\leq C\Big(  1+\sup_{\tau>0}f_+(\tau)+L\big(1+\sup_{\tau>0}f_+(\tau)\big)\Big),
\end{align*}
where $L$ is the Lipschitz constant of $f|_{[0,1]}$.
Translating these estimates to $\psi_\sigma$, and using the monotonicity of $\sigma\to \psi_{\sigma}$ and $\psi_{\sigma}[\R]\subset [0,1]$, 
we can use  the Arzel\'a–Ascoli theorem  to conclude that 
$$\psi_{\sigma}\to \psi_{0}\quad {\rm  in}\ \,  C^{2s}_{\loc}((-\infty, 0))\cap L^\infty(\R) \quad {\rm as}\ \, \sigma\to0^+. $$
Moreover, as in Step 2 of the proof of Proposition \ref{prop-2},  we can similarly show that $x\to \psi_{0}$ is strictly decreasing in $(-\infty, 0)$  and 
$\displaystyle \lim_{x\to-\infty}\psi_{0}(x)=1$ due to $\psi_{0}(x)\geq \frac12$ in $(-\infty, -\bar x)$. As a consequence, $\psi_{0}$ is a solution of (\ref{eq 1.1-ell}).  We can also show that $\psi_{0}$ is a maximal solution of \eqref{eq 1.1-ell} as in Step 3 of the proof of  Proposition \ref{prop-2}.

\medskip
Next we exclude the possibility that $\bar x=-\infty$.  If this case happens,  then we can similarly show that 
$$\psi_{\sigma}\to \psi_0\quad {\rm  in}\ \,  C^{2s}_{\loc}(\R)\cap L^\infty(\R) \quad {\rm as}\ \, \sigma\to0^+, $$
and $w=\psi_0$ satisfies
\begin{equation}\label{eq 1.1-wR}
 \left\{
\begin{array}{cll}
 (-\Delta)^s w-c w' = f(w)  \quad
  &{\rm in}\  \, \R,
  \\[2mm]
 0\leq w\leq 1 \quad
  &{\rm in}\  \, \R, 
\\[2mm]   
w(0)=\frac12. 
 \end{array}
 \right.
 \end{equation}
 Moreover, using the monotonicity of $\psi_0(x)$ and the maximum principle, we can further prove $0< \psi_0(x)< 1$ in $\R$, 
and 
$$\lim_{x\to-\infty}\psi_{0}(x)=1,\qquad  \lim_{x\to+\infty}\psi_{0}(x)=0. $$

Now we consider the solution $u(t,x)$ of 
\begin{equation}\label{hq 1.1-wR}
 \left\{
\begin{array}{lll}
 \partial_tu+ (-\Delta)^s w  = f(w)  \quad
  &{\rm in}\  \, (0,+\infty)\times \R, 
   \\[2mm]   \phantom{ ----   }
u(0,\cdot)=u_0\quad &{\rm in}\ \R, 
 \end{array}
 \right.
 \end{equation}
 where  $u_{0}=\psi_{0}\eta_0$ in $\R$,  and $\eta_0\in C^2(\R)$ satisfies   
$$\eta_0(\R)\subset [0,1],\quad  \eta_0=1\ \, {\rm in}\  [-1,1]\quad {\rm and} \quad  \eta_0=0\ \, {\rm in} \ \R\setminus [-2,2]. $$ 
 Fix $\tau\in(0,1)$ and let
 $$\Omega_{\tau} (t)=\{x\in\R: u(t,x)\geq \tau \}. $$
 Then it follows from \cite[Theorem 1.2]{CR13} (also see \cite{FY13})
 that  for any $\hat c\in(0, \frac{f'(0)}{1+2s})$,
 $$u(t,x)\to 1\quad \text{uniformly in $\{|x|\leq e^{\hat ct}\}$ as $t\to+\infty$}.   $$
Therefore,  for $t$ large
$$  [-e^{\hat ct}, e^{\hat ct}]\subset  \Omega_{\tau} (t). $$

Let
$$u_1(t,x)=\psi_{0}(x-c_0t)\quad {\rm in}\  \, (0,+\infty)\times \R; $$
then it is a solution of (\ref{hq 1.1-wR})
with $u_1(0,x)=\psi_{0}(x)$ for $x\in\R$. 
By comparison principle, we have that 
$$u_1\geq u\quad {\rm in}\  \, (0,+\infty)\times \R. $$
It follows that
\begin{align*}
0<\tau \leq u(t,e^{\hat ct}) \leq u_1(t,e^{\hat ct})=\psi_{0}(e^{\hat ct}-ct)\to\psi_0(\infty)=0\quad {\rm as}\ t\to+\infty,
\end{align*}
which is impossible. Hence we can rule out  the case $\bar x=-\infty$. \smallskip

\medskip

{\bf Step 2:}   We show the uniqueness of the semi-wave solution.

 	Assume that, in addition to $\psi_{0}$, there exists another solution
	$\Phi$ of \eqref{eq 1.1-ell}. For each fixed $k\in(0,1)$ and $\tau\geq0$, define
	$$\wtd\Phi_{k,\tau}(x):=k\Phi(x+\tau)\quad {\rm for}\ x\in\R.$$
    Then $\wtd\Phi_{k,\tau}$ satisfies
\begin{equation}
	\left\{
	\begin{array}{cll}
		(-\Delta)^s \wtd\Phi_{k,\tau}-c_0 \wtd\Phi_{k,\tau}'
		=
		kf(\Phi(\cdot+\tau))
		\leq f(\wtd\Phi_{k,\tau})
		&{\rm in}\ (-\infty,-\tau),
		\\[2mm] 
		0\leq \wtd\Phi_{k,\tau}\leq k
		&{\rm in}\ \R,
		\\[2mm]		
		\wtd\Phi_{k,\tau}=0
		&{\rm in}\ [-\tau,+\infty),
		\\[2mm]		
		\displaystyle\wtd\Phi_{k,\tau}(-\infty)=k.
	\end{array}
	\right.
\end{equation}

	For  $\tau\geq0$ and fixed  $k\in(0,1)$, define
	\[
	P_\tau(x)=P_{\tau,k}(x):=
	\psi_{0}(x)-\wtd\Phi_k(x+\tau).
	\]
	Since $\wtd\Phi_k$ is non-increasing, $P_{\tau_2}\geq P_{\tau_1}$ whenever
	$\tau_2\geq\tau_1$. Moreover,
	$P_\tau(-\infty)=1-k>0$ and $P_\tau>0$ in
	$(-\infty,0)$ for all sufficiently large $\tau$.
	Define
	\begin{align}
		\tau_*:=
		\inf\big\{\tau>0:
		P_\tau(x)>0\ \text{for all }x\in(-\infty,0)\big\}.
	\end{align}
	We claim that $\tau_*=0$.
	
	Suppose, to the contrary, that $\tau_*>0$. By continuity,
	$P_{\tau_*}\geq0$ in $\R$, and
	$P_{\tau_*}\not\equiv0$ since
	$P_{\tau_*}(-\infty)=1-k>0$. As additionally
	$P_{\tau_*}(x)=\psi_{0}(x)>0$ for $x\in(-\tau_*,0)$, the definition of $\tau_*$ implies the existence of 	$x_0\in(-\infty,-\tau_*)$ such that $P_{\tau_*}(x_0)=0$. 
	 Hence,
	\[
	P_{\tau_*}'(x_0)=0.
	\]
	For $x<-\tau_*$, one has
	\[
	(-\Delta)^sP_{\tau_*}-cP_{\tau_*}'
	\geq
	f(\psi_{0})-f\big(k\Phi(\,\cdot+\tau_*)\big),
	\]
	which leads to, using
	$\psi_0^{c_0}(x_0)=k\Phi(x_0+\tau_*)$ and $P_{\tau_*}'(x_0)=0$,
	\begin{align*}
	(-\Delta)^sP_{\tau_*}(x_0)\geq0.
	\end{align*}
	 On the other hand, since
	$P_{\tau_*}\geq0$ in $\R$, $P_{\tau_*}(x_0)=0$, and
	$P_{\tau_*}\not\equiv0$,
	\[
	(-\Delta)^sP_{\tau_*}(x_0)
	=
	-c_{s}\int_{\R}
	\frac{P_{\tau_*}(y)}{|x_0-y|^{1+2s}}\,dy
	<0,
	\]
	which is a contradiction. Therefore, $\tau_*=0$.
	
	It follows that $P_\tau>0$ in $(-\infty,0)$ for every $\tau>0$.
	Letting $\tau\to0^+$ yields
	$\psi_{0}(x)-k\Phi(x)\geq0$ for all $x\in\R$. Since this inequality
	holds for every $k\in(0,1)$, letting $k\to1^-$ gives
	\[
	\psi_{0}(x)\geq\Phi(x)
	\qquad\text{for all }x\in\R.
	\]
	Reversing the roles of $\psi_{0}$ and $\Phi$ yields the opposite
	inequality. Consequently, $\Phi\equiv\psi_{0}$ in $\R$.

Moreover, it follows from \eqref{rho^s} that 
$$ \psi_0(x)\leq \min\big\{1,\, C|x|^s\big\}\quad {\rm in}\ (-\infty, 0).   $$
\medskip

To stress the dependence of $\psi_0$ on $c$, we will write $\psi_0=\psi_0^c$.

{\bf Step 3:}   We show the decreasing monotonicity of the mapping $c\mapsto \psi_{0}^c$.

 Let $c_{1}> c_{2}\geq 0$ and $\psi_{i}=\psi_0^{c_i}$ for $i=1,2$. 
 For each fixed $k\in(0,1)$ and $\tau\geq 0$, we denote
	$\wtd\Phi_{k,\tau}(x):=k\psi_1(x+\tau)$ in $\R$. Then 
    \begin{align*}
        (-\Delta)^s \wtd\Phi_{k,\tau}-c_{2} \wtd\Phi_{k,\tau}'
		\leq (-\Delta)^s \wtd\Phi_{k,\tau}-c_{1} \wtd\Phi_{k,\tau}'
		=kf(\psi_{c_{1}}(\cdot+\tau))
		\leq f(\wtd\Phi_{k,\tau}), 
    \end{align*}
 Thus  
    $\wtd\Phi_{k,\tau}$ satisfies
\begin{equation}\label{eq mov-1}
	\left\{
	\begin{array}{cll}
		(-\Delta)^s \wtd\Phi_{k,\tau}-c_{0,2} \wtd\Phi_{k,\tau}'
		\leq f(\wtd\Phi_{k,\tau})
		&{\rm in}\ (-\infty,-\tau),
		\\[2mm] 
		0\leq \wtd\Phi_{k,\tau}\leq k
		&{\rm in}\ \R,
		\\[2mm]		
		\wtd\Phi_{k,\tau}=0
		&{\rm in}\ [-\tau,+\infty),
		\\[2mm]		  
		\displaystyle\wtd\Phi_{k,\tau}(-\infty)=k.
	\end{array}
	\right.
\end{equation}
For $\tau\geq0$ and fixed $k\in(0,1)$, denote
	\[
	P_\tau(x)=P_{\tau,k}(x):=
	\psi_{2}(x)-\wtd\Phi_{k,\tau}(x).
	\]
Then we can repeat the argument in Step 2 above to  derive  
$$\psi_1(x)\leq \psi_2(x)\quad {\rm for}\ x\in\R.$$
If there exists a point $x_0\in (-\infty, 0)$ such that
$$\psi_1(x_0)= \psi_2(x_0),$$
then the strong maximum principle implies that 
$$\psi_1(x)= \psi_2(x)\quad {\rm for}\ x\in(-\infty,0),$$
which is impossible, since $c_{1}>c_{2}$. 
Therefore, 
$$\psi_1< \psi_2\quad {\rm in}\  (-\infty,0).  $$

\medskip

 {\bf Step 4:}   Finally, we show the continuity of $\psi_{0}^c$ with respect to
$ c\in[0,\infty)$.

Let $c_{n}\to \bar c_{0}\in[0,\infty)$. Using the regularity estimates for $\psi_0^{c_n}$, much as before, after passing to a subsequence if necessary, 
we obtain
$\psi_0^{c_{n}}\to\psi_*$ locally uniformly in $(-\infty,0]$, where
$\psi_*$ satisfies \eqref{eq 1.1-ell} with $c=\bar c_0$.
The uniqueness of the solution to \eqref{eq 1.1-ell} then implies that
$\psi_*=\psi_0^{\bar c_0}$. Since every convergent subsequence has the same
limit, the entire sequence converges to $\psi_0^{\bar c_0}$. Therefore,
$\psi_0^c(x)$ depends continuously on $c\in[0,\infty)$, uniformly for $x$ in any bounded subset of $(-\infty, 0]$.

We show next that this continuity is uniform for $x\in (-\infty, 0]$. Fix $\hat c_0>\bar c_0$. By the monotonicity proved in Step 3 above, for all large $n$ and $ c=c_n \mbox{ or } \bar c_0$,
\[
\psi_0^0(x)\ge \psi_0^{c}(x)\ge \psi_0^{\hat c_0}(x) \mbox{ in }  (-\infty, 0].
\]
Given any $\eps>0$, using $\psi_0^{\hat c_0}(-\infty)=\psi_0^0(-\infty)=1$, we can find $M\gg 1$ so that
\[
\psi_0^{\hat c_0}(x),\ \psi_0^0(x)\in (1-\eps, 1] \mbox{ for } x\in (-\infty, -M].
\]
It follows that
\[
|\psi_0^{c_n}(x)-\psi_0^{\bar c_0}(x)|<\eps \mbox{ for } x\leq -M.
\]
Since $\psi_0^c(x)$ depends continuously on $c\in[0,\infty)$, uniformly for $x$ in $[-M, 0]$, we can find $N\gg 1$ such that
\[
|\psi_0^{c_n}(x)-\psi_0^{\bar c_0}(x)|<\eps \mbox{ for } x\in [ -M, 0],\ n\geq N.
\]
Thus $\|\psi_0^{c_n}-\psi_0^{\bar c_0}\|_{L^\infty(\R_-)}\leq \eps$ for all large $n$, as desired.
\medskip

{\bf Step 5:} We prove the existence of a unique $c_0=c_0(s)>0$ such that \eqref{c0} holds.

Define 
$$\zeta_0(c)= \frac{\mu }{2s}  \int_{-\infty}^{0}  \psi_0^c(y)(-y)^{-2s} dy-c\quad \text{ for $c\geq 0$.}$$
From the decreasing monotonicity, we  have 
$$\psi_0^{c}(x)\leq \psi_0^0(x)\leq \min\big\{1,\, C|x|^s\big\}\quad {\rm in}\  (-\infty, 0)\quad{\rm for\ every}\ c\geq 0.   $$
Hence
$\displaystyle \int_{-\infty}^{0}  \psi_0^c(y)(-y)^{-2s} dy$ is well defined due to $s\in(\frac12,1)$.  
Moreover, by the conclusions proved in Steps 3 and 4 above,  the mapping $c\in[0,+\infty)\mapsto \psi_0^c$ is strictly decreasing and uniformly continuous in $\R$. 
Hence $\zeta_0(c)$ is continuous,  strictly decreasing,
$$\zeta_0(0)= \frac{\mu }{2s} \int_{-\infty}^{0}  \psi_{0}^0(y)(-y)^{-2s} dy>0,$$
 and 
 $$\lim_{c\to+\infty} \zeta_0(c )\leq \lim_{c\to+\infty}\Big( \frac{\mu }{2s}  \int_{-\infty}^{0} \min\big\{1,\, C|y|^s\big\} (-y)^{-2s} dy-c\Big)=-\infty$$
 So there exists a unique $c_0=c_0(s)>0$ such that $\zeta_0(c_0)=0$, i.e.
\begin{equation*}\label{eqv-1}
 \frac{\mu }{2s}  \int_{-\infty}^{0}  \psi_0^{c_0}(y)(-y)^{-2s} dy=c_0.
\end{equation*}
The proof is now complete. 
\qed

\setcounter{equation}{0}
\section{Spreading speed  for $s\in(\frac12,1)$ }

 Let $(u,g,h)$ be the  solution of \eqref{eq 1.1} and suppose that spreading happens, that is
\[
\lim_{t\to\infty}h(t)=-\lim_{t\to\infty} g(t)=\infty,\; \mbox{ and } \lim_{t\to\infty} u(t,x)=1 \mbox{ locally uniformly in $x\in\mathbb R$}.
\]
Let $c_0>0$ be the speed of the semi-wave $w_{c_0}$ in Theorem  \ref{teo ell}.
We are going to show that
\[
\lim_{t\to\infty}\frac{h(t)}{t}=-\lim_{t\to\infty}\frac{g(t)}{t}=c_0,
\]
which is part (iii) of Theorem \ref{teo 1}.

It suffices to show the conclusion for $h(t)$, as $\tilde u(t,x):=u(t,-x)$ satisfies \eqref{eq 1.1} with free boundaries
$x=\tilde h(t):=-g(t),\; x=\tilde g(t):=-h(t)$ and initial function $\tilde u_0(x):=u_0(-x)$.

\begin{lemma}\label{<c0} Under the above assumptions, we have
\[
\limsup_{t\to\infty}\frac{h(t)}{t}\leq c_0.
\]
\end{lemma}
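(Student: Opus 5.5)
We construct an upper solution out of a slightly faster semi-wave and compare it with $(u,g,h)$ on a moving half-line, following the scheme used for nonlocal free boundary problems in \cite{DLZ, DN-MA}.

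\textbf{Choice of semi-wave.} Fix a small $\eps>0$ and put $c_\eps:=(1+\eps)c_0$. Write $w_\eps:=w_{c_\eps}$ for the semi-wave of Theorem \ref{teo ell} (with diffusion coefficient $d$). Since $c\mapsto \zeta_0(c)$ is strictly decreasing, we have
\[
\frac{\mu}{2s}\int_{-\infty}^0 w_{c_\eps}(y)|y|^{-2s}\dd y<c_\eps .
\]
Hence we may pick $\delta\in(0,1)$ small with
\[
(1+\delta)\frac{\mu}{2s}\int_{-\infty}^0 w_{\eps}(y)|y|^{-2s}\dd y\le c_\eps .
\]

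\textbf{Upper solution.} Since spreading happens, $\limsup_{t\to\infty}\max_x u(t,x)\le 1$. This follows by comparison with the ODE $\bar u'=f(\bar u)$, as in \cite{CDN26}. So there is $T>0$ with $u(t,x)\le 1+\delta/2$ for all $t\ge T$ and $x\in\R$. Define
\[
\bar h(t):=c_\eps (t-T)+K,\qquad \bar u(t,x):=(1+\delta)\,w_\eps\bigl(x-\bar h(t)\bigr),
\]
with $K>h(T)$ large so that $\bar u(T,x)\ge u(T,x)$ for $x\in[g(T),h(T)]$. This is possible because $w_\eps(-\infty)=1$ and $w_\eps$ is strictly decreasing. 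We then check the following.

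\begin{itemize}
\item[(a)] By \textbf{(f1)}, $(1+\delta)f(w)\ge f((1+\delta)w)$. Hence for $x<\bar h(t)$,
\[
\bar u_t+d(-\Delta)^s\bar u=(1+\delta)\bigl(-c_\eps w_\eps'+d(-\Delta)^sw_\eps\bigr)=(1+\delta)f(w_\eps)\ge f(\bar u).
\]
\item[(b)] Changing variables in the free boundary integral, with $\bar u=0$ for $x\ge \bar h$, gives
\[
\mu\int_{-\infty}^{\bar h}\int_{\bar h}^\infty\frac{\bar u(t,x)}{|x-y|^{1+2s}}\dd y\dd x=(1+\delta)\frac{\mu}{2s}\int_{-\infty}^0 w_\eps(y)|y|^{-2s}\dd y\le c_\eps=\bar h'(t).
\]
\item[(c)] $\bar u\ge u$ at $t=T$, and $\bar u\ge 1+\delta/2\ge u$ far to the left, say for $x\le \bar h(t)-M$ with $M$ fixed, for all $t\ge T$.
\end{itemize}

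\textbf{Comparison.} We then prove $h(t)\le\bar h(t)$ and $u\le\bar u$ for $t\ge T$. The argument is the usual first-touching one. Let $t^*$ be the first time where either $h(t^*)=\bar h(t^*)$ or $u-\bar u$ reaches $0$ at an interior point.
\begin{itemize}
\item At an interior touching point the difference $\bar u-u$ has a global minimum $0$. The fractional Laplacian of $\bar u-u$ there is strictly negative unless $\bar u\equiv u$, which contradicts the inequalities in (a) combined with the equation for $u$. This is the same kind of contradiction as in \eqref{eq 1.1-e4-1-1}.
\item At a boundary touching, $u\le\bar u$ on $\R$ and $h=\bar h$ give $h'(t^*)\le\bar h'(t^*)$ by monotonicity of the integral. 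To obtain a strict inequality we use the slack from (b): we arrange $\bar h'>$ the integral strictly by taking $\delta$ slightly smaller than the maximal allowed value. Alternatively, we replace $K$ by $K+\eta$ and let $\eta\to0$.
\end{itemize}
The left free boundary $g(t)$ causes no trouble, because $\bar u>0$ on $(-\infty,\bar h)\supset(g,h)$. This yields $\limsup_{t\to\infty} h(t)/t\le c_\eps$. Letting $\eps\to0$ gives the claim.

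\textbf{Main obstacle.} The main obstacle is rigor in the comparison principle for this nonlocal free boundary problem with a singular kernel. The following points need care:
\begin{itemize}
\item Justify that $(-\Delta)^s\bar u$ is defined pointwise up to $x=\bar h$. This uses $w_\eps\le C|x|^s$ near $0$ from \eqref{rho^s} and interior $\cC^{2s+\gamma}$ regularity.
\item Handle the touching at the free boundary, where $u$ and $\bar u$ vanish like $\rho^s$ and the classical Hopf-type argument must be replaced.
\end{itemize}
I would first try to invoke directly the comparison principle established in \cite{CDN26} for upper solutions of \eqref{eq 1.1}, which is stated on the half-line $(-\infty,\bar h(t))$, rather than reproving it.
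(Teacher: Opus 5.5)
Your proposal is correct and is essentially the paper's argument: a slightly amplified semi-wave with a linearly moving front is shown to be an upper solution, using that $f(u)/u$ is nonincreasing for the equation and the defining identity for $c_0$ for the front; the comparison principle of \cite{CDN26} then gives $h(t)$ at most the linear front. The only difference is where the slack comes from. The paper keeps $w_{c_0}$ and moves it at speed $c_0(1+2\eps)$, using $w_{c_0}'<0$ in the equation and $(1+\eps)c_0<(1+2\eps)c_0$ at the front. You instead use the faster wave $w_{(1+\eps)c_0}$ at its own speed and get the front slack from $w_{c_\eps}\le w_{c_0}$, the monotonicity in $c$ from Theorem \ref{teo ell}; both work.
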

\begin{proof}
Let
  $\psi_0^{c_0}(x)$ be the semi-wave with speed $c_0$ proved in the previous section.
For any given $\epsilon>0$  we define
\[
\delta:=2\epsilon c_0,\; \overline h(t):=(c_0+\delta)t+L,\; \overline u(t,x)=: (1+\epsilon)\psi_0^{c_0}(x-\overline h(t)),
\]
with $L>0$ to be determined. A simple comparison argument with the ODE problem
\[
v'=f(v),\; v(0)=\|u_0\|_\infty
\]
shows that $u(t,x)\leq v(t)$,  and hence  
\[
\limsup_{t\to\infty}u(t,x)\leq 1 \mbox{ uniformly for $x\in \R$}.
\]
Thus there exists $T>0$ large so that
\[
u(T+t,x)\leq 1+\frac\epsilon 2 \ \mbox{ for }\ t\geq 0, \ \ x\in [g(T+t), h(T+t)].
\]
Since $\psi_0^{c_0}(-\infty)=1$,
we can set $L>\max\{h(T),-g(T)\}$ such that   $\overline h(0)=L>h(T)$ and
\begin{equation}\label{u0}
\overline u(0,x)=(1+\epsilon)\psi_0^{c_0}(x-L)>1+\frac \epsilon 2\geq u(T,x)\ \mbox{ for }\, x\in [g(T), h(T)],
\end{equation}

We show next that
\begin{equation}\label{diff-ineq}
\overline u_t+d(-\Delta)^s\overline u\geq  f(\overline u)\quad \text{for $t>0$ and $x\in [g(t+T), \overline h(t)]$},
\end{equation}
 and
\begin{equation}\label{h'}
\overline h'(t)> \mu \int_{g(t+T)}^{\overline h(t)}\int_{\overline h(t)}^\infty \overline u(t,x) |x-y|^{-1-2s}dydx\ \mbox{ for } t>0.
\end{equation}
Indeed,  for $t>0$ and $g(T+t)<x<\overline h(t)$, using  {\bf (f1)} we have
\begin{eqnarray*}
\overline u_t(t,x) \!\!\!\!&&= -(1+\epsilon)(c_0+\delta)(\psi_0^{c_0})'(x-\overline h(t))>-(1+\epsilon) c_0(\psi_0^{c_0})'(x-\overline h(t))\\[1mm]
&& =  (1+\epsilon)\left[-d(-\Delta)^s\psi_0^{c_0}(x-\overline h(t))+f(\psi_0^{c_0}(x-\overline h(t))\right]\\[1mm]
&&=-d(-\Delta)^s \overline u(t,x)+(1+\epsilon)f(\psi_0^{c_0}(x-\overline h(t))\\[1mm]
&&\geq-d(-\Delta)^s\overline u(t,x)+f(\overline u(t,x)).
 \end{eqnarray*}
 This proves \eqref{diff-ineq}.

 To show \eqref{h'} we calculate
\begin{align*}
 &\mu \int_{g(t+T)}^{\overline h(t)}\int_{\overline h(t)}^\infty \overline u(t,x) |x-y|^{-1-2s}dydx\\
 &=(1+\epsilon) \frac{\mu}{2s} \int_{g(t+T)}^{\overline h(t)} \psi_0^{c_0}(x-\overline h(t)) |\overline h(t)- x|^{-2s}dx
\\[1mm] &\leq  \frac{\mu}{2s} (1+\epsilon) \int_{-\infty}^{\overline h(t)}\psi_0^{c_0}(x-\overline h(t)) |\overline h(t)- x|^{-2s}  dx\\
  &= \frac{\mu}{2s}(1+\epsilon) \int_{-\infty}^{0}\psi_0^{c_0}(y) |y|^{-2s}  dx\\
 &=  (1+\epsilon)c_0
 <c_0+\delta
 =\overline h'(t).
 \end{align*}
  Thus \eqref{h'} holds.
Hence we can apply the comparison principle to obtain 
$$h(t+T)\leq \overline h(t)\quad {\rm for}\ t>0,$$
which implies 
$$\limsup_{t\to+\infty}\frac{h(t)}t\leq c_0+\delta=c_0(1+2\epsilon).  $$
Letting $\epsilon\to 0$, we immediately obtain   $\displaystyle\limsup_{t\to\infty} \frac{h(t)}t\leq c_0$.
 \end{proof}

\begin{lemma}\label{>c0} Under the assumptions of Lemma \ref{<c0}, we have
\[
\liminf_{t\to\infty}\frac{h(t)}{t}\geq c_0.
\]
\end{lemma}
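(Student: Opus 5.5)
We will construct a lower solution from the semi-wave with a slightly smaller speed. By Theorem \ref{teo ell}, $c\mapsto \psi_0^c$ is strictly decreasing and continuous uniformly in $x$. Hence $\zeta_0(c)$ is continuous and strictly decreasing with $\zeta_0(c_0)=0$, so for small $\epsilon>0$ there is $c_\epsilon<c_0$ with $c_\epsilon\to c_0$ as $\epsilon\to0$. To absorb the factor $(1-\epsilon)$ and the truncation error coming from the finite domain, we pick $c_\epsilon$ so that
\[
(1-\epsilon)\frac{\mu}{2s}\int_{-\infty}^{0}\psi_0^{c_\epsilon}(y)|y|^{-2s}\,dy> c_\epsilon+\delta_\epsilon
\]
for some small $\delta_\epsilon>0$.

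\textbf{Construction.} Set $\underline h(t):=c_\epsilon t+L$ and
\[
\underline u(t,x):=(1-\epsilon)\,\psi_0^{c_\epsilon}(x-\underline h(t))\,\eta(t,x),
\]
where $\eta$ cuts the profile off on the left. The simplest choice, which we try first, is to impose the lower solution inequality only on a moving window $x\in(\underline h(t)-M-c_\epsilon t,\ \underline h(t))$, i.e.\ on $(L-M,\ c_\epsilon t+L)$, whose left endpoint is fixed. We then use spreading: $u(t,x)\to1$ locally uniformly, and since $\underline h$ grows linearly we need $u\ge 1-\epsilon$ on a growing region to the left.

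A cleaner option is to work on the half line, comparing $u$ and $\underline u$ on $x\le \underline h(t)$ with $\underline u$ extended to be $\le u$ for $x\le g$-side regions. For this we choose $T$ large so that $u(t+T,x)\ge 1-\epsilon/2$ on $[-L_1,L_1]$ for all $t\ge0$. This is not immediate, but it follows from a standard argument: the known spreading result gives $u\to1$ locally uniformly, and since $g$ and $h$ move outward the region only expands.

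\textbf{Verification.} In the interior the check runs as follows. We have
\[
\underline u_t=-(1-\epsilon)c_\epsilon\psi'=(1-\epsilon)\big[-d(-\Delta)^s\psi+f(\psi)\big]\le -d(-\Delta)^s\underline u+f(\underline u)
\]
wherever no cutoff is active, using $f(\psi)(1-\epsilon)\le f((1-\epsilon)\psi)$ from (f1). Where the cutoff $\eta$ matters, the nonlocal operator picks up a term $(1-\epsilon)\int \psi(1-\eta)\,|x-y|^{-1-2s}\,dy\ge0$, which has the wrong sign. We absorb it using $f((1-\epsilon)\psi)-(1-\epsilon)f(\psi)\ge \kappa\epsilon>0$ when $\psi$ is near $1$, which is where the cutoff sits. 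The tail integral is $O(M^{-2s})$, so taking $M$ large relative to $\epsilon$ makes it small.

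For the boundary inequality we need
\[
\underline h'(t)=c_\epsilon<\mu\int\!\!\int \underline u\,|x-y|^{-1-2s}.
\]
The right side is at least $(1-\epsilon)\frac{\mu}{2s}\int_{-M'}^{0}\psi_0^{c_\epsilon}|y|^{-2s}$ minus the cutoff loss. By the choice of $c_\epsilon$ and the integrability of $|y|^{-2s}$ at infinity (since $s>1/2$), this exceeds $c_\epsilon$ for $M'$ large.

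With these in hand, the comparison principle (in the form used in Lemma \ref{<c0}, adapted to lower solutions on $[\,\cdot\,,\underline h(t)]$ with $u\ge\underline u$ on the left region) gives $h(t+T)\ge\underline h(t)$. Hence $\liminf h(t)/t\ge c_\epsilon$, and letting $\epsilon\to0$ gives $\liminf h(t)/t\ge c_0$.

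\textbf{Main obstacle.} The hard step is handling the left truncation. The lower solution must sit below $u$ on the whole complement of its support window, and because of the nonlocal operator, the values of $\underline u$ to the left of the window still enter $(-\Delta)^s\underline u$. My first attempt will be to set $\underline u=(1-\epsilon)\psi$ on all of $(-\infty,\underline h(t))$, which needs no cutoff at all. This is valid provided we can show $u(t+T,x)\ge 1-\epsilon$ for $x\in[g(t+T)+K,\ \underline h(t)-K]$, or more simply provided we run the comparison on $[\,0,\underline h(t)]$ with the left boundary data controlled by spreading, so that the remaining error is an integral tail of size $O(K^{-2s})$. If this is delicate, the fallback is the cutoff plus absorption via the KPP gap as described in the Verification step.
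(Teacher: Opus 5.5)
There is a genuine gap, located in the step you flag as the main obstacle, and neither of your options closes it.

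The no-cutoff version is ruled out. Comparison for \eqref{eq 1.1} requires $\underline u\le u$ on the whole complement of the comparison window. But $u(t+T,\cdot)=0$ on $(-\infty,g(t+T)]$, while $(1-\epsilon)\psi_0^{c_\epsilon}(x-\underline h(t))>0$ there. So $\underline u$ must vanish on some left half-line $(-\infty,\ell]$. Then, at points where $\eta\equiv1$,
\[
\underline u_t+d(-\Delta)^s\underline u=(1-\epsilon)f(\psi_0^{c_\epsilon})+E,\qquad E(t,x)\ge (1-\epsilon)\frac{d\,c_s}{2s}\,\psi_0^{c_\epsilon}(\ell-\underline h(t))\,(x-\ell)^{-2s}>0 .
\]
This nonlocal error is present throughout the window, including as $x\to\underline h(t)^-$, not only ``where the cutoff sits''. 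Near the front, $\psi_0^{c_\epsilon}(x-\underline h(t))\to0$, so the KPP gap $f((1-\epsilon)\psi)-(1-\epsilon)f(\psi)$ tends to $0$. It even vanishes identically if $f(u)/u$ is constant near $0$, which \textbf{(f1)} allows. Moreover, the profile $\psi_0^{c_\epsilon}$ travels exactly at its own speed $c_\epsilon$, so $\underline u_t$ cancels the $-c_\epsilon\psi'$ term and offers no slack. Making $E=O(M^{-2s})$ small therefore does not help: near $x=\underline h(t)$ nothing strictly positive is available to dominate it, and the interior inequality fails there. Letting the cutoff travel with the front instead would require $u\ge1-\epsilon$ on an interval of length $\sim c_\epsilon t$, which is essentially the conclusion you are trying to prove.

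The paper's proof supplies the two missing ingredients.

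First, it keeps the profile $\psi_0^{c_0}$ but moves the front more slowly, $\underline h(t)=(c_0-\delta)t+L$. Then $\underline u_t+(-\Delta)^s\underline u$ acquires the extra term $(1-\epsilon)\delta(\psi_0^{c_0})'\le0$. This is the negative slack used where $\psi_0^{c_0}$ is small and the KPP structure gives nothing.

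Second, instead of a cutoff it uses the symmetric, compactly supported function $\underline u=(1-\epsilon)\big[\psi_0^{c_0}(x-\underline h(t))+\psi_0^{c_0}(-x-\underline h(t))-1\big]_+$ with $\underline g=-\underline h$. Since $(-\Delta)^s v_+\le(-\Delta)^s v$ wherever $v>0$, the truncation produces no term of the wrong sign. The remaining interaction term $\Delta(t,x)$ is shown to be $\le0$ by the argument of \cite{DLZ}, exploiting $f'<0$ near $1$ together with the slack $\delta(\psi_0^{c_0})'$.

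Because this is a genuine two-boundary lower solution vanishing outside $[-\underline h,\underline h]$, it can be compared with $(u,g,h)(T+\cdot)$ by the comparison principle of \cite{CDN26} once $u(T,\cdot)\ge1-\epsilon$ on $[-L,L]$. No one-sided comparison principle with prescribed left data is needed. The free-boundary inequality is then checked directly at speed $c_0-\delta$. The loss from the overlap near $x=\underline h(t)$ is controlled by $\psi_0^{c_0}(z)\le C|z|^s$ and a small $l_\epsilon$, and the left tail by $s>1/2$ and a large $L$.
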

\begin{proof} Since $f'(1)<0$, there exists $\delta_0>0$ small such that $f'(u)<0$ for $u\in [1-\delta_0, 1]$.
For any given $\epsilon\in (0, \delta_0]$, we define
\[\begin{cases}
\delta:=2\epsilon c_0,\;\ \   \underline g(t):=-\underline h(t),\ \underline h(t):=(c_0-\delta)t+L,\\
  \underline u(t,x):=(1-\epsilon)\big[\psi_0^{c_0}(x- \underline h(t))+\psi_0^{c_0}(-x-  \underline h(t))-1\big]_+,
  \end{cases}
\]
with $L>0$ a large constant to be determined and $a_+=\max\{a, 0\}$. We also assume that $\psi_0^{c_0}(x)=0$ for $x>0$. It follows that
\[
  \underline u(t, x)=0 \mbox{ for } t\geq 0,\ |x|\geq \underline h(t).\]

For the above $\epsilon>0$, there exists $L_\epsilon\gg 1$ such that 
$$\psi_0^{c_0}(x)\geq 1-\epsilon\quad {\rm for}\ \, x\leq -L_\epsilon, $$
and hence, with $L=L_\epsilon$, we have
 \[  \underline u(t,x)\geq (1-\epsilon)\big( \psi_0^{c_0}(|x|-\underline h(t))-\epsilon\big)
\geq (1-\epsilon)(1-2\epsilon)
 \quad  \mbox{ for } |x|\leq  \underline h(t)- L_\epsilon,\ \,    t>0.
\]
 
We show next that
\begin{equation}\label{--ut}
\underline u_t+(-\Delta)^s \underline u\leq  f(\underline u)
\ \ \mbox{ for } t>0,\ |x|<\underline h(t),  
\end{equation}
and
\begin{equation}\label{h'2}
\underline h'(t)\leq \frac{\mu}{2s}\int_{-\underline h(t)}^{\underline h(t)}   \underline u(t,x)|\underline h(t)-x|^{-2s}dydx
\;\mbox{ for } t>0.
\end{equation}
 
To show (\ref{--ut}), we observe that 
\begin{align*}
   &\quad  \underline u_t(t,x)+(-\Delta)^s \underline u(t,x)\leq \underline u_t(t,x)+(1-\epsilon)\Big[  (-\Delta)^s\psi_0^{c_0}(x-\underline h(t))+(-\Delta)^s\psi_0^{c_0}(-x-\underline h(t)) \Big]   \\[1mm] 
   &=(1-\epsilon) 
    \Big[ \delta(\psi_0^{c_0})'(x-\underline h(t))+\delta(\psi_0^{c_0})'(-x-\underline h(t)) +f(\psi_0^{c_0}(x-\underline h(t))+f(\psi_0^{c_0}(-x-\underline h(t))\Big]
          \\[1mm]&= f(\underline u) +\Delta(t,x),
\end{align*}
where 
$$\begin{aligned}
\Delta(t,x):=&\  (1-\epsilon) 
    \Big[ \delta(\psi_0^{c_0})'(x-\underline h(t))+\delta(\psi_0^{c_0})'(-x-\underline h(t)) +f(\psi_0^{c_0}(x-\underline h(t))+f(\psi_0^{c_0}(-x-\underline h(t))\Big]\\
    & \ \ 
          - f(\underline u(t,x)).
          \end{aligned}$$
Therefore to prove \eqref{--ut}, it suffices to show that $\Delta(t,x)\leq 0$. This inequality for $\Delta(t,x)$ can be shown in  the same way 
as in \cite{DLZ} where a similar inequality (3.10) is proved, by using the properties of $f$ and $\psi_0^{c_0}(x)$.

To show \eqref{h'2}, we define
\[
\cA_{\pm}=\{(t,x)\in (0,+\infty)\times \R_{\pm}:\,  x\in (-\underline h(t), \underline h(t)), \ \underline u(t,x)>0\},
\]
and note that 
 for $(t,x)\in \cA_{+}$,
\begin{align*}
  \big[\psi_0^{c_0}(x-\underline h(t))
	+\psi_0^{c_0}(-x-\underline h(t))-1\big]_+ 
	  =
	\psi_0^{c_0}(x-\underline h(t))
	-\min\big\{\psi_0^{c_0}(x-\underline h(t)),
	1-\psi_0^{c_0}(-x-\underline h(t))\big\}.
\end{align*}
It follows that
\begin{align*}
	 \frac{\mu}{2s} \int_{-\underline h(t)}^{\underline h(t)}
	\frac{\underline u(t,x)}{|\underline h(t)-x|^{2s}}\, dx 
	& \geq
 \frac{\mu}{2s} (1-\epsilon)
	\int_0^{\underline h(t)}
	\frac{\psi_0^{c_0}(x-\underline h(t))}
	{|\underline h(t)-x|^{2s}}\,dx\\
	&\quad
	- \frac{\mu}{2s} (1-\epsilon)
	\int_0^{\underline h(t)}\frac{
	\min\big\{\psi_0^{c_0}(x-\underline h(t)),
	1-\psi_0^{c_0}(-x-\underline h(t))\big\} }{
	|\underline h(t)-x|^{-2s}}\,dx.
\end{align*}
 Recall that
$\psi_0^{c_0}(z)\leq C(-z)^s$ for $z\in[-1,0]$ and some $C>0$. Choose
$l_\epsilon\in(0,1)$ sufficiently small such that
\[
\frac{\mu C}{ 2s(1-s)}l_\epsilon^{1-s}
\leq\frac{\epsilon c_0}{8}.
\]
Since $\psi_0^{c_0}(x)\to1$ as $x\to-\infty$ and $s>1/2$, by enlarging $L=L_\epsilon>0$ we may assume that
\[
\frac{\mu}{2s( 2s-1)}L_\epsilon^{1-2s}
\leq\frac{\epsilon c_0}{4}
\]
and
\[
\frac{\mu}{2s( 2s-1)}
\big(1-\psi_0^{c_0}(-L_\epsilon)\big)
l_\epsilon^{1-2s}
\leq\frac{\epsilon c_0}{8}.
\]
Since $\underline h(t)\geq L_\epsilon$, by the
definition of $c_0$,
\begin{align*}
\frac{\mu}{2s}
	\int_0^{\underline h(t)}
	\psi_0^{c_0}(x-\underline h(t))
	|\underline h(t)-x|^{-2s}\,dx
	&=
	\frac{\mu}{2s}
	\int_{-\underline h(t)}^0
	\psi_0^{c_0}(z)|z|^{-2s}\,dz
	\geq c_0-\frac{\epsilon c_0}{4}.
\end{align*}
On the other hand, we have
\begin{align*}
	&\quad
	\int_0^{\underline h(t)}
	\min\big\{\psi_0^{c_0}(x-\underline h(t)),
	1-\psi_0^{c_0}(-x-\underline h(t))\big\}
	|\underline h(t)-x|^{-2s}\,dx\\
	&\leq
	\int_{\underline h(t)-l_\epsilon}^{\underline h(t)}
	\psi_0^{c_0}(x-\underline h(t))
	|\underline h(t)-x|^{-2s}\,dx\\
	&\quad+
	\int_0^{\underline h(t)-l_\epsilon}
	\big(1-\psi_0^{c_0}(-x-\underline h(t))\big)
	|\underline h(t)-x|^{-2s}\,dx\\
	&\leq
	C\int_0^{l_\epsilon}z^{-s}\,dz
	+\big(1-\psi_0^{c_0}(-L_\epsilon)\big)
	\int_{l_\epsilon}^{\underline h(t)}z^{-2s}\,dz\\
	&\leq
	\frac{C}{1-s}l_\epsilon^{1-s}
	+\frac{1-\psi_0^{c_0}(-L_\epsilon)}
	{2s-1}l_\epsilon^{1-2s}.
\end{align*}
Hence,
\[
\frac{\mu}{2s}
\int_0^{\underline h(t)}
\min\big\{\psi_0^{c_0}(x-\underline h(t)),
1-\psi_0^{c_0}(-x-\underline h(t))\big\}
|\underline h(t)-x|^{-2s}\,dx
\leq\frac{\epsilon c_0}{4}.
\]
Consequently,
\begin{align*}
	 \frac{\mu}{2s} \int_{-\underline h(t)}^{\underline h(t)}
	\frac{\underline u(t,x)}{|\underline h(t)-x|^{2s}}\, dx 
  \geq
	(1-\epsilon)\left(c_0-\frac{\epsilon c_0}{2}\right)
	    > (1-2\epsilon)c_0
	 =c_0-\delta
  =\underline h'(t),
\end{align*}
which proves \eqref{h'2}.
 
Clearly $\underline u(0,x)\leq 1-\epsilon$ for $x\in [-L, L]$ and $\underline u(0,x)=0$ for $|x|\geq L$.
Since spreading happens to $(u,g,h)$, there exists $T\gg 1$ such that
\[
[-L, L]\subset (g(T), h(T)),\ \ u(T,x)\geq 1-\epsilon \mbox{ for } x\in [-L, L].
\]
Therefore we can apply the comparison principle (see Remark 4.4 in \cite{CDN26}) to compare   $(\underline u(t,x), \underline g(t), \underline h(t))$ 
with $(u(T+t), g(T+t), h(T+t))$ to conclude that
\[
[-\underline h(t), \underline h(t)]\subset (g(T+t), h(T+t)),\  \underline u(t,x)\leq u(t,x) \mbox{ for } t>0,\ |x|<\underline h(t).
\]
It follows that
\[
\liminf_{t\to\infty}\frac{h(t)}t=\liminf_{t\to\infty}\frac{h(T+t)}t\geq \lim_{t\to\infty}\frac{\underline h(t)}t=c_0-\delta=c_0(1-2\epsilon).
\]
Letting $\epsilon\to 0$ we obtain $\displaystyle\liminf_{t\to\infty}\frac{h(t)}{t}\geq c_0$. This completes the proof. \end{proof}

 Clearly, part (iii) of Theorem \ref{teo 1} follows directly from the above two lemmas.

\section{ Precise rates of acceleration  for $s\in(0,\frac12]$}

In this section, we prove parts (i) and (ii) of Theorem \ref{teo 1}. The proof is based on the construction of subtle upper and lower solutions.

We start by giving a basic profile function which will be used to generate various lower solutions of the density function. 

\begin{lemma}\label{lem:smooth-profile}
Given $\rho>2$ and $K_2>0$, there exists a nondecreasing function
$\psi\in C^2(\R)$ such that
\begin{equation}\label{eq:psi-prop}
\psi(\zeta)=0\ \ \mbox{ for \ } \zeta\le0,\qquad
\psi(\zeta)=K_2\zeta^\rho\ \ \mbox{ for\  } 0\le\zeta\le\tfrac12,\qquad
\psi(\zeta)=K_2\ \ \mbox{ for \ } \zeta\ge1,
\end{equation}
and for some positive constants $c_\psi,C_\psi$,
\begin{equation}\label{eq:psi-der-bound}
c_\psi\zeta^\rho\le\psi(\zeta)\le K_2,\qquad
0\le\psi'(\zeta)\le C_\psi\zeta^{\rho-1}
\quad \mbox{ for \ } \zeta\in[0,1].
\end{equation}
\end{lemma}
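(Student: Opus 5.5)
The construction is elementary: we keep the two prescribed pieces and fill the transition interval $[\tfrac12,1]$ with a $C^2$ monotone interpolation. Set $\psi(\zeta)=0$ for $\zeta\le 0$, $\psi(\zeta)=K_2\zeta^\rho$ on $[0,\tfrac12]$, and $\psi(\zeta)=K_2$ for $\zeta\ge1$. Since $\rho>2$, the function $K_2\zeta^\rho$ and its first two derivatives vanish at $\zeta=0$. Hence the gluing at $0$ is automatically $C^2$.

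On $[\tfrac12,1]$ we need a $C^2$ nondecreasing function $q$ with the following data:
\[
q(\tfrac12)=K_22^{-\rho},\quad q'(\tfrac12)=\rho K_22^{1-\rho},\quad q''(\tfrac12)=\rho(\rho-1)K_22^{2-\rho},
\]
\[
q(1)=K_2,\quad q'(1)=q''(1)=0.
\]

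The cleanest way to get monotonicity is to prescribe the derivative first. Let $p=\psi'$ on $[\tfrac12,1]$, and require that $p$ is continuous and $C^1$, that $p\ge0$, and that the following matching conditions hold:
\[
p(\tfrac12)=\rho K_22^{1-\rho},\quad p'(\tfrac12)=\rho(\rho-1)K_22^{2-\rho},\quad p(1)=p'(1)=0,
\]
\[
\int_{1/2}^1p=K_2(1-2^{-\rho}).
\]

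We compare with the comparison function $p_0(\zeta)=\rho K_2\zeta^{\rho-1}$ cut off smoothly. Note first that
\[
\int_{1/2}^1 \rho K_2\zeta^{\rho-1}\,d\zeta=K_2(1-2^{-\rho}),
\]
which is exactly the required mass. So we take $p=\rho K_2\zeta^{\rho-1}\chi(\zeta)+a\,\eta(\zeta)$, where:
\begin{itemize}
\item $\chi\in C^\infty$ with $0\le\chi\le1$, $\chi=1$ near $\tfrac12$ and $\chi=0$ near $1$;
\item $\eta\ge0$ is a smooth bump supported in the interior of $(\tfrac12,1)$ with $\int\eta=1$;
\item $a=\int_{1/2}^1\rho K_2\zeta^{\rho-1}(1-\chi)\,d\zeta\ge0$.
\end{itemize}
Then $p\ge0$, $p$ is smooth, and it matches $\rho K_2\zeta^{\rho-1}$ together with its derivative near $\tfrac12$. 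It also vanishes near $1$, and it has the correct integral.

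Define $\psi(\zeta)=K_22^{-\rho}+\int_{1/2}^\zeta p$ on $[\tfrac12,1]$. Then $\psi$ is $C^2$ across both junction points, since near each junction it coincides identically with the neighbouring formula. It is nondecreasing because $p\ge0$, and it satisfies $\psi(1)=K_2$. This handling of monotonicity together with exact mass matching is the only step needing care, and the bump correction $a\eta$ resolves it.

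It remains to check \eqref{eq:psi-der-bound}. On $[0,\tfrac12]$ both bounds hold with $c_\psi=K_2$ and $C_\psi=\rho K_2$. On $[\tfrac12,1]$, $\psi$ is nondecreasing, so
\[
\psi\ge K_22^{-\rho}\ge K_22^{-\rho}\zeta^\rho.
\]
Also $\psi'=p\le \max p\le 2^{\rho-1}\max p\cdot\zeta^{\rho-1}$, because $\zeta^{\rho-1}\ge2^{1-\rho}$ there. The upper bound $\psi\le K_2$ follows from monotonicity and $\psi(1)=K_2$. Taking
\[
c_\psi=K_22^{-\rho},\qquad C_\psi=\max\{\rho K_2,\ 2^{\rho-1}\max p\}
\]
completes the proof.
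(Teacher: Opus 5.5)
Your construction is correct, but it fills the transition interval $[\tfrac12,1]$ differently from the paper. The paper takes a single smooth nondecreasing cutoff $\bar\eta$ with $\bar\eta=0$ on $(-\infty,\tfrac12]$ and $\bar\eta=1$ on $[1,\infty)$, and sets $\psi=K_2[\zeta^\rho(1-\bar\eta)+\bar\eta]$ for $\zeta>0$. Then $\psi'=K_2[\rho\zeta^{\rho-1}(1-\bar\eta)+\bar\eta'(1-\zeta^\rho)]\ge0$, because $\zeta^\rho\le1$ wherever $\bar\eta'\neq0$. The value $K_2$ at $\zeta=1$ is automatic, so no total increment has to be balanced.

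You instead prescribe $\psi'=\rho K_2\zeta^{\rho-1}\chi+a\eta$ and restore the correct total increment with the bump. This works: $p\ge0$, $p$ agrees with $\rho K_2\zeta^{\rho-1}$ near $\tfrac12$ and vanishes near $1$, and $\int_{1/2}^1p=K_2(1-2^{-\rho})$. Hence $\psi$ coincides with the neighbouring formulas near both junctions. The cost is two auxiliary functions and a normalization constant.

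The paper's formula is shorter, but it relies on the ordering $\zeta^\rho\le1$ on $[\tfrac12,1]$. Prescribing $\psi'$ first removes any dependence on that ordering, at the price of tracking the increment. The verification of \eqref{eq:psi-der-bound} is the same in both, including the constants $c_\psi=K_22^{-\rho}$ and $C_\psi=\max\{\rho K_2,2^{\rho-1}\|\psi'\|_{L^\infty([1/2,1])}\}$.
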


\begin{proof}
Take $\bar\eta\in C^\infty(\R)$ with $0\le\bar\eta\le1$,
$\bar\eta(\zeta)=0$ for $\zeta\le1/2$, $\bar\eta(\zeta)=1$ for $\zeta\ge1$
and $\bar\eta'\ge0$, and then define
\[
\psi(\zeta):=K_2\bigl[\zeta^\rho(1-\bar\eta(\zeta))+\bar\eta(\zeta)\bigr]
\ \mbox{ for } \zeta>0,\qquad
\psi(\zeta):=0\ \mbox{ for } \zeta\le0.
\]
Then $\psi(\zeta)=K_2\zeta^\rho$ on $[0,1/2]$ and $\psi(\zeta)=K_2$ on
$[1,\infty)$. Since $\rho>2$, we know $\psi$ is $C^2$ in $\R$. Moreover,
\[
\psi'=K_2[\rho\zeta^{\rho-1}(1-\bar\eta)+\bar\eta'(1-\zeta^\rho)]\ge0,
\]
and so $\psi$ is nondecreasing, and \eqref{eq:psi-der-bound} holds with, for
example, 
\[
\mbox{$c_\psi=K_2/2^\rho$ and
$C_\psi=\max\{\rho K_2,\,2^{\rho-1}\|\psi'\|_{L^\infty([1/2,1])}\}$.}
\]
The proof is complete.
\end{proof}


For $\eps\in(0,1/8)$ and $L>0$ define
\begin{equation}\label{eq:profile}
\varphi_L(x):=\psi\!\left(\frac{L-|x|}{\eps L}\right)\quad\mbox{ for } x\in\R.
\end{equation}
Then $\varphi_L\in C^2(\R)$  and
 \[
\varphi_L(x)= \begin{cases} K_2 &\mbox{  for } |x|\leq (1-\eps)L,\\
0& \mbox{ for } |x|\geq L.
\end{cases}
\]
This function $\varphi_L$ will be frequently used below as a buiding block for constructing lower solutions.

\subsection{The case $s\in (0,1/2)$}\label{sec:lower}

\begin{lemma}\label{lem:profile}
Let $s\in(0,1/2)$, $\rho>2$, $K_2>0$, $\eps\in(0,1/8)$ and $\varphi_L$ be given by \eqref{eq:profile}. Then the following conclusions hold:
\begin{itemize}
\item[(i)] There is $C_1=C_1(s,\rho,K_2,\eps)>0$ such that
\[
\begin{cases}
\abswen{(-\Delta)^s\varphi_L(x)}\le C_1 L^{-2s}&\mbox{ for } |x|\leq L,\\
0\le(-\Delta)^s\varphi_L(x)\le C_1L^{-2s}&\mbox{ for } |x|\le(1-\eps)L.
\end{cases}
\]
\item[(ii)] There are constants $C_0=C_0(s,\rho,K_2,\eps)>0$ and
$\delta_0=\delta_0(s,\rho,K_2,\eps)\in(0,1/2)$ such that
\begin{equation}\label{eq:profile-tail}
(-\Delta)^s\varphi_L(x)\le -C_0 L^{-2s}
\qquad\text{for } x\in[(1-\eps\delta_0)L,L],\ L>0.
\end{equation}
\end{itemize}
\end{lemma}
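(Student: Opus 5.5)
The plan is to reduce everything to a single scale-free function by scaling. Since $\varphi_L(x)=\Phi(x/L)$ with $\Phi(y):=\psi\big((1-|y|)/\eps\big)$, the homogeneity of the fractional Laplacian gives
\[
(-\Delta)^s\varphi_L(x)=L^{-2s}\,\big[(-\Delta)^s\Phi\big](x/L).
\]
Both (i) and (ii) then reduce to $L$-independent statements about $\Phi$. We need $|(-\Delta)^s\Phi|\le C_1$ on $[-1,1]$, $(-\Delta)^s\Phi\ge 0$ on $[-(1-\eps),1-\eps]$, and $(-\Delta)^s\Phi\le -C_0$ on $[1-\eps\delta_0,1]$. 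We also use that $\Phi$ is even, so only $y\ge 0$ needs treatment.

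For (i), $\Phi\in C^2(\R)$ with compact support in $[-1,1]$ and bounded $C^2$ norm depending only on $(\rho,K_2,\eps)$, because $\psi\in C^2$ and $\psi$ is constant near $0^-$ in the relevant sense. The standard estimate
\[
|(-\Delta)^s\Phi(y)|\le C\big(\|\Phi''\|_\infty+\|\Phi\|_\infty\big)
\]
follows by splitting the integral into $|z|<1$ (second-order difference) and $|z|\ge1$. This gives the bound on all of $\R$, in particular on $[-1,1]$. For $|y|\le 1-\eps$ we have $\Phi(y)=K_2=\max\Phi$. Hence
\[
(-\Delta)^s\Phi(y)=c_s\int_\R\frac{K_2-\Phi(z)}{|y-z|^{1+2s}}\,dz\ge 0,
\]
with no principal value issue since the integrand is nonnegative and vanishes near $y$.

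For (ii), the idea is that near $y=1$ the function $\Phi$ is tiny but has mass $K_2$ nearby, so the integral is dominated by the negative contribution. For $y\in[1-\eps\delta,1]$, write
\[
(-\Delta)^s\Phi(y)=c_s\,\mathrm{p.v.}\!\int\frac{\Phi(y)-\Phi(z)}{|y-z|^{1+2s}}dz.
\]
The negative part comes from $z\in[-(1-\eps),1-\eps]$, where $\Phi=K_2$ and $\Phi(y)=\psi(\zeta)$ with $\zeta=(1-y)/\eps\le\delta$. Here $\Phi(y)\le K_2\delta^\rho\le K_2/2$, and $|y-z|\le 2$ on a set of length at least $1$. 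This contributes at most $-c_sK_2 2^{-2-2s}$. The positive contributions are bounded as follows.

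\emph{(a) $z$ outside $[-1,1]$.} Here $\Phi(z)=0$, and the contribution is at most $\Phi(y)\int_{|z-y|\ge 1-y}|y-z|^{-1-2s}dz\lesssim K_2\zeta^\rho(\eps\zeta)^{-2s}\lesssim \delta^{\rho-2s}$.

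\emph{(b) $z$ near $y$, in the region $1-\eps/2\le|z|\le1$.} Here $\Phi$ is given explicitly by $K_2((1-|z|)/\eps)^\rho$. Using $\psi'\le C_\psi\zeta^{\rho-1}$ and $\psi''$ bounded, the principal value part over $|z-y|<(1-y)/2$ is bounded by $C\sup|\Phi''|\,(1-y)^{2-2s}$. The rest of this region has $\Phi(z)\ge0$, so it contributes at most $\Phi(y)\int_{|z-y|\ge(1-y)/2}|y-z|^{-1-2s}dz\lesssim\delta^{\rho-2s}$.

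\emph{(c) The remaining part of $[-1,1]$.} Here $\Phi(y)-\Phi(z)\le\Phi(y)$ and the distance from $y$ is bounded below, so the contribution is again $O(\delta^\rho)$.

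Choosing $\delta_0$ small makes all positive terms at most half of the negative term, which yields $C_0$.

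The main obstacle is step (b): controlling the singular near-diagonal part at $y$ close to the boundary point $1$, where $\Phi$ is only $C^2$ and the natural length scale $1-y$ degenerates. The plan is to use the explicit power form on $[0,1/2]$, where $\psi''\lesssim\zeta^{\rho-2}$. This gives a near-diagonal bound of order $\zeta^{\rho-2}(\eps\zeta)^{2-2s}\eps^{-2}\lesssim\zeta^{\rho-2s}$, which vanishes as $\delta\to0$ since $\rho>2>2s$.
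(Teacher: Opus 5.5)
Your proposal is correct. Part (i) is essentially the paper's argument: the scaling $\varphi_L=\Phi(\cdot/L)$, boundedness of $(-\Delta)^s\Phi$ from $\Phi\in C^2$, and the sign from $\Phi=K_2=\max\Phi$ on $|y|\le 1-\eps$. One small imprecision: $\Phi$ is $C^2$ at $y=0$ because $\Phi\equiv K_2$ on $|y|\le1-\eps$ (as $\psi\equiv K_2$ on $[1,\infty)$ and $1/\eps>1$), and it is $C^2$ at $|y|=1$ because $\rho>2$. It is not a matter of $\psi$ being constant near $0^-$.

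For (ii) you take a genuinely different route. The paper evaluates only at the single point $y=1$. Since $\Phi(1)=0$, the integrand is one-signed, so $(-\Delta)^s\Phi(1)=-c_s\int_{-1}^{1}\Phi(z)(1-z)^{-1-2s}\,dz\le-\frac{c_sK_2}{2s}(\eps^{-2s}-1)<0$ with no cancellation to analyse. Continuity of $(-\Delta)^s\Phi$, which follows from $\Phi\in C^2$, then gives $\delta_0$ with $(-\Delta)^s\Phi\le\frac12(-\Delta)^s\Phi(1)$ on $[1-\eps\delta_0,1]$.

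Your decomposition into the plateau contribution and the error terms (a)--(c) is in effect a quantitative proof of that continuity near $y=1$. It buys explicit constants, for example $C_0=c_sK_2 2^{-3-2s}$ and a computable $\delta_0$, at the cost of considerably more bookkeeping.

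The step you flag as the main obstacle is actually harmless. The crude bound $C\|\Phi''\|_\infty(1-y)^{2-2s}\le C\|\Phi''\|_\infty(\eps\delta)^{2-2s}$, using the global supremum, already tends to $0$. Moreover, since $s<1/2$ the integral converges absolutely for Lipschitz $\Phi$, so a first-order bound of order $\|\Phi'\|_\infty r^{1-2s}$ would also suffice. The refined power-law computation with $\psi''\lesssim\zeta^{\rho-2}$ is not needed.
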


\begin{proof}
Denote $\Psi(x):=\psi\!\left(\frac{1-|x|}{\eps }\right)=\psi\!\left(\frac1\eps-\frac{|x|}{\eps}\right)=\varphi_1(x)$. Then $\Psi\in C^2(\R)$ and $\varphi_L(x)=\Psi(x/L)$, 
\begin{equation}\label{eq:homog}
(-\Delta)^s\varphi_L(x)=L^{-2s}(-\Delta)^s\Psi\!\left(\frac{x}{L}\right),
\qquad x\in[-L,L].
\end{equation}
Since $\Psi\in C^2(\R)$ and $0<s<1/2$, the function $(-\Delta)^s\Psi$ is
continuous over $\R$, and hence bounded on $[-1,1]$. This proves (i) with
$C_1=\max_{|z|\le1}|(-\Delta)^s\Psi(z)|$, the additional sign statement
$(-\Delta)^s\varphi_L\ge0$ for $|x|\le(1-\eps)L$ follows from
$\varphi_L\equiv K_2$ (the maximum of $\varphi_L$) there.

For (ii), since $\Psi(x)=0$ for $|x|\geq 1$ and $\Psi=K_2$ on $[0,1-\eps]$, we have
\[
(-\Delta)^s\Psi(1)
=-c_s\int_{-1}^{1}\frac{\Psi(z)}{(1-z)^{1+2s}}\,\dd z
\le-c_sK_2\int_{0}^{1-\eps}\frac{\dd z}{(1-z)^{1+2s}}
=-\frac{c_sK_2}{2s}\left(\eps^{-2s}-1\right)<0.
\]
By the continuity of $(-\Delta)^s\Psi$, there is
$\delta_0\in(0,1/2)$ such that
\[\mbox{$(-\Delta)^s\Psi(z)\le\tfrac12(-\Delta)^s\Psi(1)<0$ for
$z\in[1-\eps\delta_0,1]$.}
\]
 Setting $C_0:=-\tfrac12(-\Delta)^s\Psi(1)>0$ and
using \eqref{eq:homog} gives \eqref{eq:profile-tail}.
\end{proof}

\subsubsection{The lower bound in Theorem~\ref{teo 1} {\rm (i)}}

\begin{lemma}\label{lem:lower}
Under the assumptions of Theorem~\ref{teo 1}, if $s\in (0, 1/2)$ then
\begin{equation}\label{eq:lower}
\liminf_{t\to\infty}\frac{h(t)}{t^{1/(2s)}}\ge
\left(\frac{2^{1-2s}\mu}{1-2s}\right)^{1/(2s)},\qquad
\liminf_{t\to\infty}\frac{-g(t)}{t^{1/(2s)}}\ge
\left(\frac{2^{1-2s}\mu}{1-2s}\right)^{1/(2s)}.
\end{equation}
\end{lemma}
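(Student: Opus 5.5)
We will construct a lower solution of the form
\[
\underline h(t):=\bigl(\Lambda(t+t_0)\bigr)^{1/(2s)},\qquad \underline g(t):=-\underline h(t),\qquad \underline u(t,x):=\varphi_{\underline h(t)}(x),
\]
with $\varphi_L$ given by \eqref{eq:profile}. Here $K_2=1-\eps$ (or slightly below $1$), $\rho>2$ is fixed, and
\[
\Lambda<\Lambda_*:=\frac{2^{1-2s}\mu}{1-2s}\cdot 2s
\]
will be chosen arbitrarily close to $\Lambda_*$, with the precise constant fixed by the free boundary computation below. By symmetry it suffices to treat $h$; the bound for $-g$ follows in the same way, or by reflecting $x\mapsto -x$. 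Once the comparison principle (Remark 4.4 in \cite{CDN26}) yields $h(T+t)\ge \underline h(t)$, we let $\eps\to0$ and $\Lambda\to\Lambda_*$.

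\emph{Step 1: the free boundary inequality.} We need
\[
\underline h'(t)\le \frac{\mu}{2s}\int_{-\underline h}^{\underline h}\underline u(t,x)\,(\underline h-x)^{-2s}\,dx.
\]
Since $\underline u\ge K_2$ on $|x|\le (1-\eps)\underline h$, the right side is at least
\[
\frac{\mu K_2}{2s(1-2s)}\Bigl[(2-\eps)^{1-2s}-\eps^{1-2s}\Bigr]\underline h^{1-2s}.
\]
This quantity is close to $\frac{\mu 2^{1-2s}}{2s(1-2s)}\underline h^{1-2s}$ when $\eps$ is small and $K_2$ is near $1$. On the other hand,
\[
\underline h'=\frac{\Lambda}{2s}\,\underline h^{1-2s}.
\]
So the inequality holds provided $\Lambda\le \frac{\mu K_2}{1-2s}\bigl[(2-\eps)^{1-2s}-\eps^{1-2s}\bigr]$. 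This expression tends to $\frac{2^{1-2s}\mu}{1-2s}$ as $\eps\to0$ and $K_2\to1$, and it gives exactly $\underline h\sim (\frac{2^{1-2s}\mu}{1-2s}t)^{1/(2s)}$. So the correct normalization is $\Lambda<\frac{2^{1-2s}\mu}{1-2s}$, not the $\Lambda_*$ written above.

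\emph{Step 2: the differential inequality} $\underline u_t+d(-\Delta)^s\underline u\le f(\underline u)$ for $|x|<\underline h(t)$. We have $\underline u_t=\psi'(\zeta)\,\frac{|x|}{\eps \underline h^2}\,\underline h'\ge 0$, where $\zeta=(\underline h-|x|)/(\eps\underline h)$. Using $\underline h'/\underline h=\frac{1}{2s(t+t_0)}\sim C\underline h^{-2s}$ and $\psi'\le C_\psi\zeta^{\rho-1}$, we get $\underline u_t\le C\zeta^{\rho-1}\underline h^{-2s}/\eps$. We then split into three regions.
\begin{itemize}
\item[(a)] $|x|\le(1-\eps)\underline h$: here $\underline u_t=0$, $(-\Delta)^s\underline u\le C_1\underline h^{-2s}$ by Lemma \ref{lem:profile}(i), and $f(K_2)\ge f(1-\eps)>0$. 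The inequality holds once $t_0$ is large.
\item[(b)] $|x|\in[(1-\eps\delta_0)\underline h,\underline h)$: Lemma \ref{lem:profile}(ii) gives $d(-\Delta)^s\underline u\le -dC_0\underline h^{-2s}$. Meanwhile $\underline u_t\le C\delta_0^{\rho-1}\underline h^{-2s}/\eps$ and $f(\underline u)\ge0$. We need this time derivative to be dominated by $dC_0\underline h^{-2s}$.
\item[(c)] The intermediate region: there $\zeta\in[\delta_0,1]$, so $\underline u\ge c_\psi\delta_0^\rho>0$ and hence $f(\underline u)\ge c>0$. Both $\underline u_t$ and $|(-\Delta)^s\underline u|$ are $O(\underline h^{-2s})$, so the inequality holds for large $t_0$.
\end{itemize}

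\emph{Main obstacle.} Region (b) is the hard step, because $C_0$ and $\delta_0$ depend on $\eps,\rho,K_2$. The way through is the scaling: $\underline u_t\le C\,\zeta^{\rho-1}\underline h^{-2s}$ with $\zeta\le\delta_0$, and the prefactor $\Lambda/(2s\eps)$ carries no large parameter. If the constants still conflict, I would first try to absorb the mismatch using $f(\underline u)\ge f'(0)\underline u/2$ near $0$ together with the factor $\zeta^{\rho-1}$ versus $\zeta^\rho$. Failing that, I would insert an extra factor, e.g.\ replace $\underline h$ by $(\Lambda(t+t_0))^{1/(2s)}$ and use that $\underline h'/\underline h\to0$ like $1/t$, whereas the $(-\Delta)^s$ term scales like $\underline h^{-2s}\sim 1/t$ with a fixed constant. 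If necessary, $\rho$ and $\delta_0$ would be reselected so that $\Lambda C_\psi\delta_0^{\rho-1}/(2s\eps)<dC_0$.

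\emph{Step 3: initial ordering and comparison.} Since spreading happens, $u\to1$ locally uniformly. So for fixed $t_0$ there is $T$ with $[-\underline h(0),\underline h(0)]\subset(g(T),h(T))$ and $u(T,\cdot)\ge 1-\eps\ge\underline u(0,\cdot)$ there. The comparison principle then gives $h(T+t)\ge\underline h(t)$, hence $\liminf h(t)/t^{1/(2s)}\ge\Lambda^{1/(2s)}$. Letting $\Lambda\uparrow\frac{2^{1-2s}\mu}{1-2s}$ yields \eqref{eq:lower}.
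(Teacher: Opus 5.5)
Your proposal is correct and follows the paper's proof essentially step for step: the same profile $\underline u=\varphi_{\underline h(t)}$ with $K_2=1-\eps$, the same choice of $\Lambda$ (the paper's $K_1$) forced by the free-boundary integral, the same three-region verification, and the same comparison argument. The region (b) difficulty is resolved by your last suggestion, exactly as in the paper: keep $\rho$ (hence $C_0$, $C_\psi$) fixed and replace $\delta_0$ by $\eta=\min\{\delta_0,(2s\,\eps\, dC_0/(\Lambda C_\psi))^{1/(\rho-1)}\}$, which is legitimate because Lemma~\ref{lem:profile}(ii) also holds on any smaller interval, and region (c) then uses $\underline u\ge c_\psi\eta^\rho$; the $1/t$-scaling idea would not help, since $\underline u_t$ and $(-\Delta)^s\underline u$ both scale like $\underline h^{-2s}$.
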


\begin{proof}
It suffices to prove the estimate for $h(t)$; the estimate for $-g(t)$
follows by applying the same argument to the  problem with initial
datum $u_0(-x)$.

Let $\eps\in(0,1/8)$, $\rho>2$, and  $\psi$ be the
smooth function in Lemma~\ref{lem:smooth-profile} with $K_2:=1-\eps$. Set
\begin{equation}\label{eq:params}
K_1:= \frac{\mu(1-\eps)\left[(2-\eps)^{1-2s}-\eps^{1-2s}\right]}{1-2s}.
\end{equation}
For large $\theta>0$ to be chosen, define, for $t\ge0$ and $x\in\R$,
\begin{equation}\label{eq:lower-def}
\underline h(t):=(K_1t+\theta)^{1/(2s)},\qquad
\underline g(t):=-\underline h(t),
\end{equation}
and
\begin{equation}\label{eq:lower-u}
\underline u(t,x):=
\psi\!\left(\frac{\underline h(t)-|x|}{\eps\underline h(t)}\right),
\end{equation}
 Then
\[
\underline u(t,x)=\begin{cases}0 &\mbox{ for } |x|\geq \underline h(t),\\
 K_2 &\mbox{ for } |x|\le(1-\eps)\underline h(t).
\end{cases}
\]
Note that $\underline u(t,x)=\varphi_{\underline h(t)}(x)$ in the
notation of \eqref{eq:profile}, so Lemma~\ref{lem:profile} applies with
$L=\underline h(t)$.

We verify that $(\underline u,\underline g,\underline h)$ is a lower
solution of \eqref{eq 1.1} for $\theta$ sufficiently large.

\smallskip

\textbf{Step 1:} We check the free boundary condition.

Since $\underline h^{2s}(t)=K_1t+\theta$, we have
\begin{equation}\label{eq:hder}
\underline h'(t)=\frac{K_1}{2s}  \,\underline h(t)^{1-2s}.
\end{equation}
Using $\underline u(t,\cdot)= K_2$ on $[-(1-\eps)\underline h(t),(1-\eps)\underline h(t)]$,
we obtain
\begin{align*}
&\mu\int_{-\underline h(t)}^{\underline h(t)}\int_{\underline h(t)}^{\infty}
\frac{\underline u(t,x)}{|x-y|^{2s+1}}\,\dd y\,\dd x
 \ge K_2\frac{\mu}{2s} \int_{\eps\underline h(t)}^{(2-\eps)\underline h(t)}
r^{-2s}\,\dd r 
\\ &=\frac{K_2\mu\,\underline h^{1-2s}(t)
\left[(2-\eps)^{1-2s}-\eps^{1-2s}\right]}{2s(1-2s)} =\frac{K_1}{2s}\,\underline h^{1-2s}(t)
=\underline h'(t),
\end{align*}
where we have used \eqref{eq:params} and $K_2=1-\eps$. Thus
\begin{equation}\label{eq:lower-fb}
\underline h'(t)\le
\mu\int_{-\underline h(t)}^{\underline h(t)}\int_{\underline h(t)}^{\infty}
\frac{\underline u(t,x)}{|x-y|^{2s+1}}\,\dd y\,\dd x.
\end{equation}
By symmetry (that is, $\underline u(t,x)=\underline u(t,-x)$,
$\underline g(t)=-\underline h(t)$), the desired inequality for
the left boundary follows from \eqref{eq:lower-fb}.

\smallskip

\textbf{Step 2:} We prove the desired inequality for the differential equation, namely,
for $\theta$ sufficiently large,
\begin{equation}\label{eq:interior-lower}
\p_t\underline u+d(-\Delta)^s\underline u\le f(\underline u)
\qquad\text{for }t>0,\ x\in(-\underline h(t),\underline h(t)).
\end{equation}
By symmetry it suffices to consider $x\in [0, \underline h(t))$. Since $\psi\in C^2(\R)$ and
$\underline h\in C^1$, the partial derivative $\p_t\underline u$ exists everywhere.
Writing $h=\underline h(t)$ and $\zeta=(h-x)/(\eps h)$, a direct computation
using \eqref{eq:hder} gives, for $(1-\eps)h<x<h$,
\begin{equation}\label{eq:ut}
\p_t\underline u(t,x)
=\frac{K_1}{2s}\,\psi'(\zeta)\,\frac{x}{h}\,\frac1\eps\,h^{-2s}\ \ge0,
\end{equation}
while $\p_t\underline u=0$ for $0\le x<(1-\eps)h$ (and, by
\eqref{eq:psi-prop}, $\p_t\underline u\to0$ as $x\to(1-\eps)h$).

By ${\bf (f1)}$, $u\mapsto f(u)/u$ is non-increasing and positive in $(0,1)$. Thus 
$0\le\underline u\le K_2<1$ implies
\begin{equation}\label{eq:c0}
\sigma_0:=\frac{f(K_2)}{K_2}>0
\qquad\text{and }\qquad f(\underline u)\ge \sigma_0\,\underline u.
\end{equation}

Let $C_1,C_0,\delta_0$ be the constants from
Lemma~\ref{lem:profile}, and set
\begin{equation}\label{eq:eta}
\eta:=\min\left\{\delta_0,\
\left(\frac{dC_0\,(2s)\eps}{K_1C_\psi}\right)^{1/(\rho-1)}
\right\}>0.
\end{equation}
For fixed $t\ge0$ and $h=\underline h(t)$, we consider three regions of $x\in [0, h]$.

\smallskip

\underline{Region 1:} $0\le x\le(1-\eps)h$. In this range, $\underline u\equiv
K_2$ and so $\p_t\underline u=0$. By Lemma~\ref{lem:profile}(i),
$(-\Delta)^s\underline u\le C_1h^{-2s}$. Hence
\[
\p_t\underline u+d(-\Delta)^s\underline u
=d(-\Delta)^s\underline u
\le dC_1h^{-2s}
\le f(K_2)=f(\underline u),
\]
provided $\theta\geq dC_1/f(K_2)$, which implies $h=\underline h(t)\ge(dC_1/f(K_2))^{1/(2s)}$ for all $t\geq 0$.

\smallskip

\underline{Region 2:} $(1-\eps\eta)h\le x<h$ (so
$\zeta\le\eta\le\delta_0$). Since $\eta\le\delta_0$,
Lemma~\ref{lem:profile}(ii) gives
$(-\Delta)^s\underline u\le - C_0h^{-2s}$. By \eqref{eq:ut} and
\eqref{eq:psi-der-bound},
\[
\p_t\underline u
\le\frac{K_1}{2s}C_\psi\eta^{\rho-1}\frac1\eps\,h^{-2s}
\le dC_0h^{-2s}
\le -d\left(-\Delta)^s\underline u\right).
\]
Hence, due to $f(\underline u)\ge0$, we have
\[
\p_t\underline u+d(-\Delta)^s\underline u
\le f(\underline u).
\]

\smallskip

\underline{Region 3:} $(1-\eps)h<x<(1-\eps\eta)h$ (so
$\eta<\zeta<1$). For such $x$, we have $\underline u=\psi(\zeta)\ge c_\psi\zeta^\rho\ge
c_\psi\eta^\rho$, and by \eqref{eq:ut} and \eqref{eq:psi-der-bound},
\begin{equation}\label{eq:ut-over-u}
\frac{\p_t\underline u}{\underline u}
\le\frac{K_1C_\psi}{2sc_\psi}\,\zeta^{-1}\,\frac1\eps\,h^{-2s}
\le\frac{K_1C_\psi}{2sc_\psi\eta}\,\frac1\eps\,h^{-2s}.
\end{equation}
By Lemma~\ref{lem:profile}(i), $|(-\Delta)^s\underline u|\le C_1h^{-2s}$.
Since $h^{-2s}\to0$ as $h\to\infty$, for all sufficiently large $\theta$ we have
\begin{equation}\label{eq:region3-small}
\frac{K_1C_\psi}{2sc_\psi\eta}\,\frac1\eps\,h^{-2s}\le\frac{\sigma_0}{2},
\qquad
dC_1h^{-2s}\le\frac{\sigma_0}{2}\,c_\psi\eta^\rho.
\end{equation}
Consequently, due to  $\underline u\ge c_\psi\eta^\rho$,
\begin{align*}
\p_t\underline u+d(-\Delta)^s\underline u
\le\frac{\sigma_0}{2}\underline u+dC_1h^{-2s}\le\frac{\sigma_0}{2}\underline u+\frac{\sigma_0}{2}\underline u
=\sigma_0\underline u
\le f(\underline u).
\end{align*}

Combining the three regions, \eqref{eq:interior-lower} holds for all
sufficiently large $\theta$ (recall $\underline h(t)\ge\theta^{1/(2s)}$).
This proves that $(\underline u,\underline g,\underline h)$ is a lower
solution.

\smallskip

\textbf{Step 3:} We complete the proof by the comparison principle. 

Since spreading occurs, there is $t_0>0$ large such that
$[\underline g(0),\underline h(0)]=[-\theta^{1/(2s)},\theta^{1/(2s)}]
\subset[g(t_0),h(t_0)]$ and
$ u(t_0,x)\geq 1-\eps=K_2\geq \underline u(0,x)$ for
$x\in[\underline g(0),\underline h(0)]$ (recall that $u(t,\cdot)\to1$
locally uniformly as $t\to\infty$). Applying the comparison principle yields
\[
[\underline g(t),\underline h(t)]\subset[g(t_0+t),h(t_0+t)],
\quad
\underline u(t,x)\le u(t_0+t,x)\ \mbox{ for } t>0,\ x\in\R.
\]
In particular $h(t_0+t)\ge\underline h(t)=(K_1t+\theta)^{1/(2s)}$ for $t>0$, and hence
\[
\liminf_{t\to\infty}\frac{h(t)}{t^{1/(2s)}}
\ge K_1^{1/(2s)}
=\left(\frac{  (1-\eps)\left[(2-\eps)^{1-2s}-\eps^{1-2s}\right]}
{1-2s}\mu\right)^{\frac1{2s}}.
\]
Letting $\eps\to0^+$ gives the first inequality in \eqref{eq:lower}; the
second follows by symmetry. This completes the proof.
\end{proof}

\subsubsection{The upper bound  in Theorem~\ref{teo 1} {\rm (i)}}\label{sec:upper}

\begin{lemma}\label{lem:upper}
Under the assumptions of Theorem~\ref{teo 1}, if $s \in (0, 1/2)$ then
\begin{equation}\label{eq:upper}
\limsup_{t\to\infty}\frac{h(t)}{t^{1/(2s)}}\le
\left(\frac{2^{1-2s}\mu}{1-2s}\right)^{1/(2s)},\qquad
\limsup_{t\to\infty}\frac{-g(t)}{t^{1/(2s)}}\le
\left(\frac{2^{1-2s}\mu}{1-2s}\right)^{1/(2s)}.
\end{equation}
\end{lemma}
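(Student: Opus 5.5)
We build an upper solution of the form $\overline h(t):=(K_1^*t+\theta)^{1/(2s)}$, $\overline g:=-\overline h$, together with a density bound $\overline u$ that is close to $1$, and then apply the comparison principle for \eqref{eq 1.1} (Remark 4.4 in \cite{CDN26}). As in Lemma \ref{lem:lower}, it suffices to treat $h(t)$; the bound for $-g(t)$ follows by symmetry.

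\textbf{Choice of $\overline u$.} The key observation is that for $s\in(0,1/2)$ the free boundary velocity is governed by the total mass near the whole interval. No boundary-layer analysis is needed: we simply bound $u$ by a constant. The comparison with the ODE $v'=f(v)$, $v(0)=\|u_0\|_\infty$, gives $T$ with $u(T+t,x)\le 1+\eps$ for all $t\ge0$ and $x\in\R$. We therefore take $\overline u(t,x):=(1+\eps)$ on $[-\overline h(t),\overline h(t)]$. More precisely, $\overline u$ is the solution $v$ of the ODE started from $\max\{\|u_0\|_\infty,1+\eps\}$ and shifted in time, so that $\overline u_t=f(\overline u)$. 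Since $\overline u$ is spatially constant on $\R$, we have $(-\Delta)^s\overline u=0$, and the equation inequality holds trivially. If the comparison principle requires $\overline u$ to vanish outside $(\overline g,\overline h)$, we argue differently: $u$ is supported in $[g,h]$, so it suffices to compare $h$ directly. For this we use $u\le 1+\eps$ and obtain
\[
h'(t)\le \mu(1+\eps)\int_{g(t)}^{h(t)}\int_{h(t)}^\infty |x-y|^{-1-2s}\,dy\,dx=\frac{\mu(1+\eps)}{2s(1-2s)}\,(h-g)^{1-2s},
\]
and similarly for $-g'$.

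\textbf{ODE comparison.} Set $H:=h-g$. Then for $t\ge T$,
\[
H'\le \frac{\mu(1+\eps)}{s(1-2s)}\,H^{1-2s},
\]
so $(H^{2s})'\le \frac{2\mu(1+\eps)}{1-2s}$. Hence $H(t)\le \bigl(\frac{2\mu(1+\eps)}{1-2s}t+C\bigr)^{1/(2s)}$, and
\[
\limsup_{t\to\infty}\frac{h-g}{t^{1/(2s)}}\le \Bigl(\frac{2\mu}{1-2s}\Bigr)^{1/(2s)}.
\]
This bounds the sum, not each endpoint. Because $\liminf h/t^{1/(2s)}$ and $\liminf(-g)/t^{1/(2s)}$ are both at least $\lambda:=\bigl(\frac{2^{1-2s}\mu}{1-2s}\bigr)^{1/(2s)}$ by Lemma \ref{lem:lower}, and since $2\lambda=\bigl(\frac{2\mu}{1-2s}\bigr)^{1/(2s)}\cdot 2^{1-1/(2s)}\cdot 2^{(2s-1)/(2s)}$, the sum bound alone is not sharp. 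Indeed, $\bigl(\frac{2\mu}{1-2s}\bigr)^{1/(2s)}=2^{1/(2s)}(\mu/(1-2s))^{1/(2s)}$, whereas $2\lambda=2\cdot 2^{(1-2s)/(2s)}(\mu/(1-2s))^{1/(2s)}=2^{1/(2s)}(\mu/(1-2s))^{1/(2s)}$. These coincide, so the sum bound is in fact sharp.

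\textbf{Conclusion.} Combining the two estimates,
\[
\limsup\frac{h}{t^{1/(2s)}}\le \limsup\frac{h-g}{t^{1/(2s)}}-\liminf\frac{-g}{t^{1/(2s)}}\le 2\lambda-\lambda=\lambda,
\]
and symmetrically for $-g$. Letting $\eps\to0$ completes the argument.

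The main obstacle is rigor in the integral estimate. We must use $u\le1+\eps$ pointwise inside $(g,h)$ together with the exact computation
\[
\int_g^h (h-x)^{-2s}\,dx=\frac{(h-g)^{1-2s}}{1-2s},
\]
and we must make sure the $\limsup/\liminf$ combination is valid. It is, since $\limsup(a-b)\le\limsup a-\liminf b$.
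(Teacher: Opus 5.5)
Your final argument is correct, but it takes a different route from the paper.

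\textbf{The paper's route.} The paper constructs an explicit upper solution $\overline u\equiv 1+\eps$, $\overline g=-\overline h$, $\overline h(t)=(Kt+\theta)^{1/(2s)}$ with $K=\mu(1+\eps)^2 2^{1-2s}/(1-2s)$. It checks that $\overline h'(t)$ dominates $\mu\int_{-\overline h}^{\overline h}\int_{\overline h}^{\infty}\overline u\,|x-y|^{-1-2s}\,dy\,dx=\frac{\mu(1+\eps)2^{1-2s}}{2s(1-2s)}\overline h^{1-2s}$, and then invokes the comparison principle for the free boundary problem. This bounds each endpoint on its own and never uses Lemma \ref{lem:lower}.

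\textbf{Your route.} You plug the pointwise bound $u\le 1+\eps$ directly into the free boundary laws and obtain $(H^{2s})'\le 2\mu(1+\eps)/(1-2s)$ for $H=h-g$. You then remove the other endpoint using the lower bound from Lemma \ref{lem:lower}. This closes because $(2\mu/(1-2s))^{1/(2s)}=2\lambda$ exactly. The step $\limsup(a-b)\le\limsup a-\liminf b$ is legitimate here since all quantities are finite.

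\textbf{What each buys.} Your argument is more elementary. It needs only $u\le v(t)$ from the ODE comparison and the $C^1$ regularity of $h,g$, not the comparison principle for the free boundary problem. The price is that your upper bound depends logically on the sharp lower bound. You can remove that dependence, as the paper does for $s=1/2$ in Lemma \ref{lem:half-upper}, by working with $M(t)=\max\{h(t),-g(t)\}$. Since $h-g\le 2M$, you get $M'\le \frac{\mu(1+\eps)}{2s(1-2s)}(2M)^{1-2s}$ a.e., i.e.\ $(M^{2s})'\le \mu(1+\eps)2^{1-2s}/(1-2s)$. This gives both endpoint bounds at once.

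\textbf{Clean-up.} The opening upper-solution paragraph (with unspecified $K_1^*$) is never used and should be dropped. The claim that the sum bound is ``not sharp'' rests on a wrong exponent identity: $2^{1-1/(2s)}\cdot 2^{(2s-1)/(2s)}=2^{2-1/s}\neq 1$ for $s\neq 1/2$. You retract it immediately, but it should not appear at all. Finally, your worry about truncating $\overline u$ is harmless. Extending $1+\eps$ by zero outside $[\overline g,\overline h]$ makes $(-\Delta)^s\overline u>0$ inside, which is the favorable sign for an upper solution.
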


\begin{proof}
As before it suffices to prove the estimate for $h(t)$. Fix small
$\eps>0$ and define, for $t\ge0$, $x\in\R$,
\begin{equation}\label{eq:upper-def}
\overline h(t):=(K t+\theta)^{1/(2s)},\qquad
\overline g(t):=-\overline h(t),
\qquad
\overline u(t,x):=1+\eps,
\end{equation}
where
\begin{equation}\label{eq:Kbar}
K:=\frac{ \mu(1+\eps)^2\,2^{1-2s}}{1-2s},
\end{equation}
and $\theta>0$ will be chosen later. We verify that
$(\overline u,\overline g,\overline h)$ is an upper solution.

\smallskip

\textbf{Step 1:} We check  the free boundary condition.

Since $\overline h^{2s}(t)=Kt+\theta$,
$\overline h'(t)=\frac K{2s}\,\overline h^{1-2s}(t)$. Using
$\overline u\equiv1+\eps$, we obtain
\begin{align*}
\mu\int_{-\overline h(t)}^{\overline h(t)}\int_{\overline h(t)}^{\infty}
\frac{\overline u(t,x)}{|x-y|^{2s+1}}\,\dd y\,\dd x
&=\mu(1+\eps)\int_{-\overline h(t)}^{\overline h(t)}
\int_{\overline h(t)}^{\infty}\frac{\dd y\,\dd x}{|x-y|^{2s+1}}\\
&=\frac{\mu(1+\eps)}{2s}\int_{0}^{2\overline h(t)}r^{-2s}\,\dd r\\
&=\frac{\mu(1+\eps)\,2^{1-2s}\,\overline h^{1-2s}(t)}{2s(1-2s)}\le\frac{K}{2s}\,\overline h^{1-2s}(t)
=\overline h'(t),
\end{align*}
where the inequality follows from $1<1+\eps$ and \eqref{eq:Kbar}. Thus
\[
\overline h'(t)\ge
\mu\int_{-\overline h(t)}^{\overline h(t)}\int_{\overline h(t)}^{\infty}
\frac{\overline u(t,x)}{|x-y|^{2s+1}}\,\dd y\,\dd x,
\]
and, by symmetry, $\overline g'(t)$ satisfies the desired corresponding inequality for
the left boundary.

\smallskip

\textbf{Step 2:} We prove the desired inequality for the differential equation.

Since $\overline u\equiv1+\eps$, we have
$\p_t\overline u=(-\Delta)^s\overline u=0$. As
$f(1+\eps)<0$ by ${\bf (f1)}$, we have
\[
\p_t\overline u+d(-\Delta)^s\overline u
=0
\ge f(1+\eps)=f(\overline u).
\]
Thus $(\overline u,\overline g,\overline h)$ is an upper solution.

\smallskip

\textbf{Step 3.} We complete the proof by the comparison principle.  

There is $t_0>0$ such that $u(t_0,x)\le1+\eps$ for
all $x\in[g(t_0),h(t_0)]$. Choosing $\theta$ large so that
$[g(t_0),h(t_0)]\subset[-\overline h(0),\overline h(0)]=
[-\theta^{1/(2s)},\theta^{1/(2s)}]$, the comparison principle then gives
\[
\mbox{$[g(t_0+t),h(t_0+t)]\subset[-\overline h(t),\overline h(t)]$ and  $u(t_0+t,x)\leq \overline u(t,x)$  for $t\ge0,\ x\in\R$.}
\]
Hence $h(t_0+t)\le\overline h(t)=(Kt+\theta)^{1/(2s)}$ and
\[
\limsup_{t\to\infty}\frac{h(t)}{t^{1/(2s)}}
\le K^{1/(2s)}
=\left(\frac{\mu(1+\eps)^2\,2^{1-2s}}{1-2s}\right)^{1/(2s)}.
\]
Letting $\eps\to0$ we obtain \eqref{eq:upper}. The estimate for $-g(t)$
follows by symmetry.
\end{proof}

Clearly Theorem~\ref{teo 1} (i) follows directly from Lemmas~\ref{lem:lower} and \ref{lem:upper}. Let us also note that the lower bound for $u(t,x)$ in the proof of Lemmas~\ref{lem:lower} and the conclusion in Lemmas~\ref{lem:upper}  indicate that the boundary of the level set $\{x: u(t,x)>\sigma\}$ $(0<\sigma<1)$ propagates with the same rate as $h(t)$ and $g(t)$ when $t\to\infty$.

\subsection{The borderline case $s=1/2$}\label{sec:half}

 The case $s=1/2$ requires a few modifications of the arguments
in Section~\ref{sec:lower}. Let $\psi$ be given in
Lemma~\ref{lem:smooth-profile}, with properties
\eqref{eq:psi-prop}--\eqref{eq:psi-der-bound}.

For $L>0$ and $\eta\in(0,h/2)$, we define
\begin{equation}\label{eq:profile-half}
\varphi_{L,\eta}(x):=\psi\!\left(\frac{L-|x|}{\eta}\right) \ \mbox{ for } x\in\R.
\end{equation}
Then 
\[\varphi_{L,\eta}(x)= \begin{cases} K_2 &\mbox{ for  $|x|\leq L-\eta$},\\
0 & \mbox{ for $|x|\geq L$.}
\end{cases}
\]

\begin{lemma}\label{lem:profile-half}
Let $s=1/2$ and  $\psi,\varphi_{L,\eta}$ be defined as above. Then the following conclusions hold:
\begin{itemize}
\item[(i)] There is $C_1=C_1(\psi)>0$ such that
\[\begin{cases} |(-\Delta)^{1/2}\varphi_{L,\eta}(x)|\le C_1\eta^{-1}&\mbox{ for 
$|x|\leq L$,}\\
0\le(-\Delta)^{1/2}\varphi_{L,\eta}(x)\le
C_1\eta^{-1} & \mbox{ for $|x|\le L-\eta$.}
\end{cases}\]
\item[(ii)] There are $C_0=C_0(\psi)>0$ and $\delta_0\in(0,1/2)$ such that
\begin{equation}\label{eq:tail-half}
(-\Delta)^{1/2}\varphi_{L,\eta}(x)\le -C_0\eta^{-1}
\qquad\text{for } |x|\in[L-\delta_0\eta,L].
\end{equation}
\end{itemize}
\end{lemma}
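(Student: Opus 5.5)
The plan is to mimic the proof of Lemma~\ref{lem:profile}, with two changes. The natural length scale is now $\eta$ instead of $L$, and the case $s=1/2$ needs care because $(-\Delta)^{1/2}$ of a $C^2$ bounded function is still continuous but the scaling is different.

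Fix $L$ and $\eta$. By symmetry it suffices to take $x\ge0$. Split $\varphi_{L,\eta}$ as $\varphi_{L,\eta}=K_2-\Phi$ near the right edge. Here the one-dimensional profile is $\Phi_\eta(x):=K_2-\psi((L-x)/\eta)$ for $x\ge0$; it vanishes for $x\le L-\eta$ and equals $K_2$ for $x\ge L$. Precisely, write
\[
\varphi_{L,\eta}(x)=\chi(x)+E(x),\qquad \chi(x):=\psi\Big(\frac{L-x}{\eta}\Big)\ \ (x\in\R),
\]
where $\chi$ is a one-sided profile (equal to $K_2$ for $x\le L-\eta$ and $0$ for $x\ge L$). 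The error $E$ is nonzero only for $x\le -(L-\eta)$, where $E=\psi((L+x)/\eta)-K_2\in[-K_2,0]$.

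By translation and scaling, $(-\Delta)^{1/2}\chi(x)=\eta^{-1}(-\Delta)^{1/2}X\big((x-L)/\eta\big)$, where $X(z):=\psi(-z)$ is a fixed $C^2$ bounded function with bounded second derivative, independent of $L$ and $\eta$. Hence $(-\Delta)^{1/2}X$ is bounded and continuous on $\R$, by the standard splitting of the singular integral into $|y|<1$, controlled by $\|X''\|_\infty$, and $|y|\ge1$, controlled by $\|X\|_\infty$. This gives $|(-\Delta)^{1/2}\chi|\le C\eta^{-1}$ everywhere.

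For the error term with $x\ge0$, $E$ is supported in $(-\infty,-(L-\eta)]$. Since $\eta<L/2$, the distance from $x$ to this support is at least $L-\eta\ge\eta$, so
\[
|(-\Delta)^{1/2}E(x)|\le c_{1/2}K_2\int_{|x-y|\ge L-\eta}|x-y|^{-2}\,dy\le 2c_{1/2}K_2(L-\eta)^{-1}\le C\eta^{-1}.
\]
The same bound, with the roles of the two edges swapped, handles the case $x<0$. This proves the first bound in (i). The sign statement for $|x|\le L-\eta$ holds because $\varphi_{L,\eta}$ attains its maximum $K_2$ there, exactly as in Lemma~\ref{lem:profile}.

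For (ii), first compute at $x=L$:
\[
(-\Delta)^{1/2}\varphi_{L,\eta}(L)=-c_{1/2}\int\varphi_{L,\eta}(y)|L-y|^{-2}\,dy\le-c_{1/2}K_2\int_{0}^{L-\eta}(L-y)^{-2}\,dy\le-\frac{c_{1/2}K_2}{2}\eta^{-1}.
\]
The last step uses $L-\eta\ge L/2$ together with
\[
\int_0^{L-\eta}(L-y)^{-2}\,dy=\eta^{-1}-L^{-1}\ge\tfrac12\eta^{-1}.
\]
To extend this to $x\in[L-\delta_0\eta,L]$ uniformly in $L$ and $\eta$, I would use the decomposition $\varphi_{L,\eta}=\chi+E$ again. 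The $\chi$-part is $\eta^{-1}g(z)$ with $g=(-\Delta)^{1/2}X$, $z=(x-L)/\eta\in[-\delta_0,0]$, and $g(0)=-c_{1/2}\int X(z)|z|^{-2}\,dz<0$, a fixed negative constant. By continuity of $g$, choose $\delta_0$ so that $g\le g(0)/2$ on $[-\delta_0,0]$.

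The $E$-part has a good sign: $E\le0$ is supported away from $x$, so its contribution is $-c_{1/2}\int E(y)|x-y|^{-2}\,dy\ge0$. This is a wrong-sign term, but it is at most $C_E(L-\eta)^{-1}$, which cannot be absorbed when $L\sim2\eta$. \textbf{This is the main obstacle.}

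I would handle it by estimating $E$ more sharply. For $x\ge L-\eta\ge L/2$ and $y\le-(L-\eta)$, we have $|x-y|\ge 2(L-\eta)$. Hence the contribution of $E$ is at most $c_{1/2}K_2/(2(L-\eta))\le c_{1/2}K_2\eta^{-1}/2$. Meanwhile $g(0)$ contains the term $-c_{1/2}K_2\int_{-\infty}^{-1}|z|^{-2}\,dz=-c_{1/2}K_2$, plus a strictly negative extra piece from $z\in(-1,0)$. So the total at $x=L$ is at most $-\kappa\eta^{-1}$ with $\kappa>c_{1/2}K_2/2$. Then choose $\delta_0$ using the equicontinuity, in $z$, of both parts after scaling by $\eta$. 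This yields $C_0=(\kappa-c_{1/2}K_2/2)/2$.
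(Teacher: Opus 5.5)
Your proof is correct. Its skeleton is the paper's: rescale to a fixed profile at scale $\eta$, use continuity of $(-\Delta)^{1/2}$ of that profile at the edge point to choose $\delta_0$, and use the maximum-point argument for the sign on the plateau.

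The genuine difference is your decomposition $\varphi_{L,\eta}=\chi+E$. The paper simply asserts $(-\Delta)^{1/2}\varphi_{L,\eta}(x)=\eta^{-1}(-\Delta)^{1/2}\psi(\zeta)$ for $|x|\in[L-\eta,L]$. That identity is exact only for the one-sided profile $\chi$. For the two-sided $\varphi_{L,\eta}$ it drops the strictly positive far-edge term $(-\Delta)^{1/2}E(x)$, which is of order $\eta^{-1}$ when $\eta$ is close to $L/2$. So the paper's constant $C_0=-\frac12(-\Delta)^{1/2}\psi(0)$ is not justified uniformly in $\eta<L/2$. This is harmless in the application, where $L\gg\eta$.

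Your bound $(-\Delta)^{1/2}E(x)\le c_{1/2}K_2/(2(L-\eta))<c_{1/2}K_2\eta^{-1}/2$ for $x\ge L-\eta$ closes this gap. It is set against $-g(0)=c_{1/2}\bigl(K_2+\int_0^1\psi(\zeta)\zeta^{-2}\,d\zeta\bigr)>c_{1/2}K_2$. The same decomposition also gives (i) on all of $[0,L]$ at once, whereas the paper handles the plateau $|x|\le L-\eta$ by a separate direct integral estimate.

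Two small cleanups:
\begin{itemize}
\item Define $\kappa:=-g(0)$, so that the total is at most $-(\kappa-c_{1/2}K_2/2)\eta^{-1}$. Your sentence about the value at $x=L$ conflates these two quantities.
\item The bound on the $E$-part is uniform for $x\ge L-\eta$. So choosing $\delta_0$ needs only continuity of $g$ at $0$, not equicontinuity of the $E$-part.
\end{itemize}
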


\begin{proof}
(i) By homogeneity, for $|x|\in[L-\eta,L]$,
$(-\Delta)^{1/2}\varphi_{L,\eta}(x)=\eta^{-1}(-\Delta)^{1/2}\psi(\zeta)$
with $\zeta=(L-|x|)/\eta\in[0,1]$. Since $\psi\in C^2(\R)$ with $\psi'$
compactly supported, $(-\Delta)^{1/2}\psi$ is continuous and bounded on
$[0,1]$; hence $|(-\Delta)^{1/2}\varphi_{L,\eta}(x)|\le C\eta^{-1}$ there.
For $|x|\le L-\eta$, $\varphi_{L,\eta}(x)=K_2$ is the maximum, so
$(-\Delta)^{1/2}\varphi_{L,\eta}(x)\ge0$, and
\begin{align*}
(-\Delta)^{1/2}\varphi_{L,\eta}(x)
&=c_{\frac 12}\int_{\R}\frac{K_2-\varphi_{L,\eta}(y)}{|x-y|^{2}}\,\dd y\\
&\le c_{\frac 12}K_2\Bigl[\int_{L-\eta}^{L}\frac{\dd y}{|x-y|^2}
+\int_{-L}^{-L+\eta}\frac{\dd y}{|x-y|^2}
+\int_{|y|>L}\frac{\dd y}{|x-y|^2}\Bigr]
\le C\eta^{-1},
\end{align*}
For $y\in [-L, -L+\eta]\cup [L-\eta, L]$, we have $|x-y|\ge\eta$, and so
\[
\int_{L-\eta}^{L}\frac{\dd y}{|x-y|^2}
+\int_{-L}^{-L+\eta}\frac{\dd y}{|x-y|^2}\leq 2\eta^{-1}.
\]
Moreover,
\[
\int_{|y|>L}|x-y|^{-2}\dd y=(L-x)^{-1}+(L+x)^{-1}\le2\eta^{-1}. 
\]
Hence
\[
0\leq (-\Delta)^{1/2}\varphi_{L,\eta}(x)\le C\eta^{-1} \mbox{ for } |x|\leq L-\eta.
\]

(ii) Since $\psi(\zeta)=0$ for $\zeta\leq 0$, $\psi(\zeta)=K_2\zeta^\rho$ on $[0,1/2]$, and
$\psi(\zeta)=K_2$ on $[1,\infty)$,
\[
(-\Delta)^{1/2}\psi(0)
=-c_{1, \frac 12}\int_0^\infty\frac{\psi(x)}{x^2}\,\dd x<0.
\]
 By the
continuity of $(-\Delta)^{1/2}\psi$, there is
$\delta_0\in(0,1/2)$ such that
\[
\mbox{ $(-\Delta)^{1/2}\psi(\zeta)\le
\tfrac12(-\Delta)^{1/2}\psi(0)<0$ for $\zeta\in[0,\delta_0]$. }
\]
Setting
$C_0:=-\tfrac12(-\Delta)^{1/2}\psi(0)>0$ and using the homogeneity
$(-\Delta)^{1/2}\varphi_{L,\eta}(x)=\eta^{-1}(-\Delta)^{1/2}\psi(\zeta)$
gives \eqref{eq:tail-half}.
\end{proof}

\begin{lemma}\label{lem:half-lower}
Under the assumptions of Theorem~\ref{teo 1}, if $s=1/2$
then
\begin{equation}\label{eq:half-lower}
\liminf_{t\to\infty}\frac{h(t)}{t\ln t}\ge\mu,\qquad
\liminf_{t\to\infty}\frac{-g(t)}{t\ln t}\ge\mu.
\end{equation}
\end{lemma}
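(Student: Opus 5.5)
We construct a symmetric lower solution from the profile $\varphi_{L,\eta}$ in \eqref{eq:profile-half}, now with a transition layer of width $\eta$ much smaller than $L$. Fix $\eps\in(0,1/8)$, take $\psi$ from Lemma~\ref{lem:smooth-profile} with $K_2:=1-\eps$, and set
\[
\underline h(t):=K(t+\theta)\ln(t+\theta),\qquad \underline g:=-\underline h,\qquad \underline u(t,x):=\varphi_{\underline h(t),\eta}(x),
\]
where $K:=\mu(1-\eps)^2$. The layer width $\eta>0$ is a fixed large constant, chosen independently of $t$, and $\theta\gg1$ is chosen at the end. The reason for this choice is the boundary integral. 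For $s=1/2$, and with $\underline u\ge K_2$ on $[-\underline h+\eta,\underline h-\eta]$, it satisfies
\[
\mu\int_{-\underline h}^{\underline h}\int_{\underline h}^\infty\frac{\underline u}{|x-y|^2}\,\dd y\,\dd x\ \ge\ \mu K_2\int_{\eta}^{2\underline h-\eta}\frac{\dd r}{r}=\mu K_2\ln\frac{2\underline h-\eta}{\eta}.
\]
Since $\ln\underline h=\ln(t+\theta)+O(\ln\ln(t+\theta))$, this is at least $\mu(1-\eps)^{3/2}\ln(t+\theta)$ once $\theta$ is large. This exceeds $\underline h'(t)=K\ln(t+\theta)+K$, which gives the free boundary inequality. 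Symmetry handles $\underline g$.

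For the interior inequality $\p_t\underline u+d(-\Delta)^{1/2}\underline u\le f(\underline u)$, we follow the three-region scheme of Lemma~\ref{lem:lower}, using Lemma~\ref{lem:profile-half} in place of Lemma~\ref{lem:profile}. With $\zeta=(\underline h-|x|)/\eta$ we have
\[
\p_t\underline u=\psi'(\zeta)\,\underline h'(t)/\eta\le C_\psi\zeta^{\rho-1}K(\ln(t+\theta)+1)/\eta .
\]
This is the main obstacle: $\underline h'$ grows like $\ln t$, while $(-\Delta)^{1/2}\underline u$ is only of size $\eta^{-1}$. A fixed $\eta$ therefore cannot control $\p_t\underline u$ near $|x|=\underline h$. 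The remedy I would try first is to let the layer width depend on time, $\eta=\eta(t):=\underline h(t)/(\ln(t+\theta))^{2}$. This still gives $\ln(2\underline h/\eta)=\ln(t+\theta)(1+o(1))$, so Step 1 survives. Then $\p_t\underline u$ picks up an extra term $\psi'(\zeta)\zeta\,\eta'/\eta$ with $\eta'/\eta=O(1/(t+\theta))$, which is harmless. The main term becomes $\psi'(\zeta)\,\underline h'/\eta=O(\psi'(\zeta)(\ln(t+\theta))^{3}/\underline h)=O(\psi'(\zeta)(\ln(t+\theta))^{2}/(t+\theta))$, which tends to $0$. With these bounds the regions are handled as follows.

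In Region 1, $|x|\le\underline h-\eta$: here $\p_t\underline u=0$ and $d(-\Delta)^{1/2}\underline u\le dC_1\eta^{-1}\le f(K_2)$ for $\theta$ large.

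In Region 2, $\zeta\le\delta_*:=\min\{\delta_0,\dots\}$: we use $(-\Delta)^{1/2}\underline u\le-C_0\eta^{-1}$. It then suffices that $C_\psi\zeta^{\rho-1}\underline h'\le dC_0$ for $\zeta\le\delta_*$. This requires a small $\delta_*$ depending on $t$, and that breaks the uniformity of $\delta_0$. So instead I would simply use that $\underline h'\eta^{-1}\cdot\eta=\underline h'$ is compared with $\eta^{-1}$ only through the ratio $\underline h'\eta/\,\eta^{2}$; more cleanly, both $\p_t\underline u$ and $\eta^{-1}$ carry the common factor $\eta^{-1}$. After multiplying through by $\eta$, the requirement becomes $C_\psi\zeta^{\rho-1}\underline h'(t)\le dC_0$, which fails for fixed $\zeta$ as $t\to\infty$. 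The honest fix is to treat $\p_t\underline u$ directly as an $o(1)$ quantity. Its total size is $o(1)\cdot\psi'(\zeta)\le o(1)\zeta^{\rho-1}$, while $-d(-\Delta)^{1/2}\underline u\ge dC_0\eta^{-1}$. So in Region 2 we need $\zeta^{\rho-1}(\ln(t+\theta))^{2}/(t+\theta)\lesssim\eta^{-1}=(\ln(t+\theta))^{2}/\underline h$, and this holds since $\underline h/(t+\theta)\approx K\ln(t+\theta)$ is balanced by $\zeta^{\rho-1}\le\delta_0^{\rho-1}$ only up to a logarithm. If that margin is not enough, I would take $\eta=\underline h/(\ln(t+\theta))^{1/2}$ instead and recheck: Step 1 then loses only a factor $1-o(1)$, and $\p_t\underline u\,\eta=O(\ln(t+\theta))$ against $\eta^{-1}\eta=1$ still fails. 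This shows the exponent has to be tuned. I therefore expect the real choice to be $\eta(t)=\underline h(t)^{\beta}$ with $\beta\in(0,1)$. Then $\ln(2\underline h/\eta)=(1-\beta)\ln t$, which costs a factor $(1-\beta)$ that we let tend to $0$ at the end, and $\p_t\underline u\lesssim \ln t\cdot\underline h^{-\beta}\ll \underline h^{-\beta}\cdot\underline h^{\,0}$ fails again. So the cleanest resolution is to make $K$ slightly smaller than the Step 1 bound and use Region 3 only: the layer region where $\psi'\neq0$ has $\underline u\ge c_\psi\zeta^{\rho}$, and $\p_t\underline u/\underline u\le C\zeta^{-1}\underline h'/\eta\to0$ uniformly for $\zeta\ge\delta$ and for $\eta=\underline h^{\beta}$. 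Near $\zeta\to0$ we use Region 2 with the estimate $\psi'(\zeta)\le C\zeta^{\rho-1}$. We choose $\rho$ large and shrink the Region 2 width, at the cost of time dependence that must be absorbed by $\eta^{-1}\gg\ln t\cdot\eta^{-1}\zeta^{\rho-1}$, which forces $\zeta\le(\ln t)^{-1/(\rho-1)}$. Region 3 then needs $\underline u\ge c(\ln t)^{-\rho/(\rho-1)}$, which beats $dC_1\eta^{-1}=\underline h^{-\beta}$. So the scheme closes with $\eta=\underline h^{\beta}$ and a time-dependent splitting point.

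Finally, comparison is done as in Step 3 of Lemma~\ref{lem:lower}, placing the initial support inside $(g(t_0),h(t_0))$ where $u\ge1-\eps$. This yields $\liminf h(t)/(t\ln t)\ge K(1-\beta)$, and letting $\eps,\beta\to0$ gives \eqref{eq:half-lower}. The bound for $-g$ follows by symmetry.
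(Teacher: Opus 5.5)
Your final construction is essentially the paper's proof. The paper also lets the layer width grow like a small power of $t$, namely $\eta(t)=(t+\theta)^{\tilde\eps}$ with $\underline h=(1-\eps)(1-\tilde\eps)\mu(t+\theta)\ln(t+\theta)$; your $\eta=\underline h^{\beta}$ and the factor $1-\beta$ play the same role. It splits the layer at the time-dependent point $\zeta_*=C_3(\ln(t+\theta)+1)^{-1/(\rho-1)}$, using $(-\Delta)^{1/2}\underline u\le -C_0\eta^{-1}$ below it and $\underline u\ge c_\psi\zeta_*^{\rho}$ against the $\eta^{-1}$-size terms above it, exactly as you end up doing. One caution: your intermediate claims that $\eta=\underline h/(\ln(t+\theta))^{2}$ or $\eta=\underline h/(\ln(t+\theta))^{1/2}$ leave Step 1 intact are false. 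For those choices $\ln(2\underline h/\eta)=O(\ln\ln t)$ rather than $\sim\ln t$, so those detours should be dropped from the write-up.
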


\begin{proof}
It suffices to prove the inequality for $h(t)$; the estimate for $-g(t)$ follows by symmetry.
Fix small $\eps,\tilde\eps\in(0,1)$,  $\rho>2$, and let $\psi$ be the
function in  Lemma~\ref{lem:smooth-profile} with $K_2:=1-\eps$. Set
\begin{equation}\label{eq:half-K}
K_1:=(1-\eps)(1-\tilde\eps)\mu,
\end{equation}
and for $\theta>0$ (to be chosen later) define, for $t\ge0$ and $x\in\R$,
\begin{equation}\label{eq:half-lower-def}
\underline h(t):=K_1(t+\theta)\ln(t+\theta),\qquad
\underline g(t):=-\underline h(t),\qquad
\eta(t):=(t+\theta)^{\tilde\eps},
\end{equation}
\begin{equation}\label{eq:half-lower-u}
\underline u(t,x):=\psi\!\left(\frac{\underline h(t)-|x|}{\eta(t)}\right).
\end{equation}
Then 
\[
\ \underline u(t,x):=\begin{cases} 0 &\mbox{ for $|x|\geq \underline h(t)$},\\
 K_2 & \mbox{ for $|x|\le\underline h(t)-\eta(t)$}.
 \end{cases}
 \]
We are going to show that $(\underline u, \underline g, \underline h)$ forms a lower solution of \eqref{eq 1.1} with $s=1/2$.

\smallskip

\textbf{Step 1:} We check the free boundary condition.

Since $\underline h'(t)=K_1(\ln(t+\theta)+1)$, it is enough to prove that
for $\theta$ large,
\begin{equation}\label{eq:half-fb}
\mu\int_{-\underline h}^{\underline h}\int_{\underline h}^{\infty}
\frac{\underline u(t,x)}{|x-y|^{2}}\,\dd y\,\dd x
\ge K_1(\ln(t+\theta)+1).
\end{equation}
Writing $h=\underline h(t)$, $\eta=\eta(t)$ and using
$\underline u\equiv K_2$ on $[-h+\eta,h-\eta]$, we have
\begin{align*}
\mu\int_{-h}^{h}\int_{h}^{\infty}
\frac{\underline u(t,x)}{|x-y|^{2}}\,\dd y\,\dd x
&\ge K_2\mu\int_{-h+\eta}^{h-\eta}\int_{h}^{\infty}
\frac{\dd y\,\dd x}{|x-y|^{2}}\\
&=K_2\mu\int_{\eta}^{2h-\eta}\frac{\dd r}{r}
=K_2\mu\left[\ln(2h-\eta)-\ln\eta\right].
\end{align*}

For $\theta$ large, $2h-\eta\ge h$, so 
\[
\ln(2h-\eta)\ge\ln h=\ln K_1+
\ln(t+\theta)+\ln\ln(t+\theta),
\]
 while $\ln\eta=\tilde\eps\ln(t+\theta)$.
Hence, with $K_2=1-\eps$ and $K_1=(1-\eps)(1-\tilde\eps)\mu$,
\begin{align*}
\mu\int_{-h}^{h}\int_{h}^{\infty}
\frac{\underline u(t,x)}{|x-y|^{2}}\,\dd y\,\dd x
&\ge K_2\mu\left[(1-\tilde\eps)\ln(t+\theta)+\ln\ln(t+\theta)+\ln K_1\right]\\
&=K_1\ln(t+\theta)+K_2\mu\left[\ln\ln(t+\theta)+\ln K_1\right]\\
&\ge K_1\ln(t+\theta)+K_1,
\end{align*}
provided $\theta$ is so large that
$K_2\mu[\ln\ln(t+\theta)+\ln K_1]\ge K_1$ for all $t\ge0$. This proves
\eqref{eq:half-fb}. By symmetry, the desired inequality for $\underline g'(t)$ also follows.
\smallskip

\textbf{Step 2:} We prove  that, for $\theta$ large,
\begin{equation}\label{eq:half-interior}
\p_t\underline u+d(-\Delta)^{1/2}\underline u\le f(\underline u)
\quad \mbox{ for } t>0,\ x\in(-\underline h(t),\underline h(t)).
\end{equation}
By symmetry we only need to consider $x\ge0$. Write $h=\underline h(t)$, $\eta=\eta(t)$
and $\zeta=(h-x)/\eta$. Since $\underline u\le K_2<1$ and $u\mapsto
f(u)/u$ is non-increasing, 
\[
\mbox{$f(\underline u)\ge \sigma_0\underline u$ with
$\sigma_0:=f(K_2)/K_2>0$.}
\]

For $0\le x\le h-\eta$ one has $\underline u\equiv K_2$ and
$\p_t\underline u=0$, by Lemma~\ref{lem:profile-half}(i),
$0\le(-\Delta)^{1/2}\underline u\le C_1\eta^{-1}$, and so
\[
\p_t\underline u+d(-\Delta)^{1/2}\underline u
=d(-\Delta)^{1/2}\underline u
\le dC_1\eta^{-1}\le f(K_2)=f(\underline u)
\]
provided $\eta^{-1}=(t+\theta)^{-\tilde\eps}\le f(K_2)/(dC_1)$, which holds
for all $t\ge0$ if $\theta$ is large enough.

For $h-\eta<x<h$ one has $\zeta\in(0,1)$ and
$\underline u=\psi(\zeta)$. Since $\eta'=\tilde\eps\eta/(t+\theta)$,
\begin{equation}\label{eq:half-ut}
\p_t\underline u
=\psi'(\zeta)\left[\frac{h'}{\eta}-\frac{\zeta\eta'}{\eta}\right]
=\psi'(\zeta)\left[\frac{h'}{\eta}-\frac{\zeta\tilde\eps}{t+\theta}\right]
\le\psi'(\zeta)\frac{h'}{\eta}.
\end{equation}
Using \eqref{eq:psi-der-bound} and $h'=K_1(\ln(t+\theta)+1)$, we obtain
\begin{equation}\label{eq:half-ut-bound}
\p_t\underline u
\le C_\psi\zeta^{\rho-1}\,\frac{K_1(\ln(t+\theta)+1)}{\eta}
=\frac{C_\psi K_1(\ln(t+\theta)+1)}{\eta}\zeta^{\rho-1}.
\end{equation}

Let $C_0,\delta_0$ be as in Lemma~\ref{lem:profile-half}, and define
\begin{equation}\label{eq:half-zeta}
C_3:=\left(\frac{dC_0}{2C_\psi K_1}\right)^{1/(\rho-1)},\qquad
\zeta_*(t,\theta):=C_3\bigl(\ln(t+\theta)+1\bigr)^{-1/(\rho-1)}.
\end{equation}
Since $\zeta_*(t,\theta)\to0$ as $\theta\to\infty$, for $\theta$ large we have
$\zeta_*(t,\theta)\le\delta_0$ for all $t\ge0$.

\smallskip

We now continue the discussion according to the location of $x$ and divide the range $h-\eta\le x< h$ into two subregions.

\underline{Region a:} $h-\zeta_*(t,\theta)\eta\le x<h$ (so
$\zeta\le\zeta_*(t,\theta)\le\delta_0$). By Lemma~\ref{lem:profile-half}(ii), for $x$ in this region,
$(-\Delta)^{1/2}\underline u\le -C_0\eta^{-1}$. Moreover, by
\eqref{eq:half-ut-bound} and \eqref{eq:half-zeta},
\[
\p_t\underline u
\le\frac{C_\psi K_1(\ln(t+\theta)+1)}{\eta}\,\zeta_*^{\rho-1}
=\frac{C_\psi K_1 C_3^{\rho-1}}{\eta}
=\frac{dC_0}{2\eta}
\le -\frac d2(-\Delta)^{1/2}\underline u.
\]
Since $f(\underline u)\ge0$, we thus obtain
\[
\p_t\underline u+d(-\Delta)^{1/2}\underline u
\le f(\underline u).
\]

\smallskip

\underline{Region b:} $h-\eta<x<h-\zeta_*(t,\theta)\eta$ (so
$\zeta_*(t,\theta)<\zeta<1$). In this region, $\underline u=\psi(\zeta)\ge c_\psi\zeta^\rho$,
and so by \eqref{eq:half-ut-bound} we obtain
\begin{equation}\label{eq:half-ut-over-u}
\frac{\p_t\underline u}{\underline u}
\le\frac{C_\psi K_1}{c_\psi}\,\zeta^{-1}\,\frac{\ln(t+\theta)+1}{\eta}
\le\frac{C_\psi K_1}{c_\psi C_3}\,
\bigl(\ln(t+\theta)+1\bigr)^{1+\frac1{\rho-1}}\eta^{-1}.
\end{equation}
 Moreover,
\[
\underline u\ge c_\psi\zeta^\rho\ge c_\psi\zeta_*^\rho=c_\psi C_3^\rho(\ln(t+\theta)+1)^{
-\rho/(\rho-1)}.
\]
 Since $\eta^{-1}=(t+\theta)^{-\tilde\eps}$ with
$\tilde\eps>0$, both
$\bigl(\ln(t+\theta)+1\bigr)^{1+\frac1{\rho-1}}\eta^{-1}$ and
$\eta^{-1}\bigl(\ln(t+\theta)+1\bigr)^{\rho/(\rho-1)}$ tend to $0$ uniformly for $t\geq 0$ as
$\theta\to\infty$. Hence, for $\theta$ large,
\begin{equation}\label{eq:half-region2b}
\frac{\p_t\underline u}{\underline u}\le\frac{\sigma_0}{2},\qquad
\frac{dC_1\eta^{-1}}{\underline u}\le \frac{dC_1}{c_\psi C_3^\rho}\eta^{-1}\bigl(\ln(t+\theta)+1\bigr)^{\rho/(\rho-1)}\leq \frac{\sigma_0}{2}.
\end{equation}
By Lemma~\ref{lem:profile-half}(i),
$|(-\Delta)^{1/2}\underline u|\le C_1\eta^{-1}$.
Consequently
\[
\p_t\underline u+d(-\Delta)^{1/2}\underline u
\le\frac{\sigma_0}{2}\underline u+dC_1\eta^{-1}
\le\frac{\sigma_0}{2}\underline u+\frac{\sigma_0}{2}\underline u
=\sigma_0\underline u\le f(\underline u).
\]

Thus \eqref{eq:half-interior} holds for $\theta$
large,  and so $(\underline u,\underline g,\underline h)$ is a lower solution to \eqref{eq 1.1}.

\smallskip

\textbf{Step 3:} We complete the proof by a comparison argument.

Since spreading occurs, there is $t_0>0$ such that
$$[\underline g(0),\underline h(0)]=[-K_1\theta\ln\theta,
K_1\theta\ln\theta]\subset[g(t_0),h(t_0)]$$ and
$u(t_0,x)\geq 1-\eps=K_2\geq \underline u(0,x)$ for
$x\in[\underline g(0),\underline h(0)]$. Hence we can apply the
comparison principle to obtain
\[
[\underline g(t),\underline h(t)]\subset[g(t_0+t),h(t_0+t)], \ \underline u(t,x)\leq u(t_0+t,x) 
\mbox{ for } t>0,\ x\in\R.
\]
Hence
\[
\liminf_{t\to\infty}\frac{h(t)}{t\ln t}=\liminf_{t\to\infty}\frac{h(t_0+t)}{t\ln t}\ge \lim_{t\to\infty} \frac{\underline h(t)}{t\ln t}=K_1=(1-\eps)(1-\tilde\eps)\mu.
\]
Letting $\eps,\tilde\eps\to0$ yields \eqref{eq:half-lower}. The estimate
for $-g(t)$ follows by symmetry.
\end{proof}


For the upper bound we will need the following  boundary barrier for the
fractional Laplacian.

\begin{lemma}\label{lem:barrier}
Suppose $s\in (0,1)$, ${\bf (f1)}$ holds, and $(u,g,h)$ is a solution of \eqref{eq 1.1}. Then there exists a constant $C_0>0$
such that for all $t> 1$ and $x\in[g(t),h(t)]$,
\begin{equation}\label{eq:barrier}
u(t,x)\le C_0\bigl(h(t)-x\bigr)^{s},
\qquad
u(t,x)\le C_0\bigl(x-g(t)\bigr)^{s}.
\end{equation}
\end{lemma}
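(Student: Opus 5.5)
The plan is to compare $u$ with an explicit barrier attached to the moving boundary. The barrier is built from the $s$-harmonic function $y_+^s$ on the half line, truncated at level $1$. By symmetry (replacing $u(t,x)$ by $u(t,-x)$) it suffices to prove the first inequality in \eqref{eq:barrier}.

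\emph{Step 1 (bounds on $u$ and $f(u)$).} First, comparison with the ODE $v'=f(v)$, $v(0)=\|u_0\|_\infty$, as in the proof of Lemma \ref{<c0}, gives $0\le u\le M:=\max\{1,\|u_0\|_\infty\}$. Let $L$ be the Lipschitz constant of $f$ on $[0,M]$. Since $f(0)=0$, this gives $f(u)\le LM$ in the support of $u$.

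\emph{Step 2 (the profile).} Set $\Phi(y):=\min\{y_+^s,1\}$ and recall that $(-\Delta)^s(y_+^s)=0$ for $y>0$. Since $\Phi\le y_+^s$ with equality on $[0,1]$, and $y_+^s-\Phi>0$ for $y>1$, for $y\in(0,1/2]$ we get
\[
(-\Delta)^s\Phi(y)=c_s\int_{1}^{\infty}\frac{z^s-1}{|y-z|^{1+2s}}\,\dd z\ \ge\ c_*>0 .
\]
(Evaluate $(-\Delta)^s\Phi$ minus $(-\Delta)^s(y_+^s)$.) Then define
\[
W(\tau,x):=A\,\Phi\bigl(h(\tau)-x\bigr),\qquad A:=\max\Bigl\{2^sM,\ \tfrac{LM}{dc_*}\Bigr\}.
\]
Since $h'\ge0$ and $\Phi'\ge0$, we have $\partial_\tau W=A\Phi'(h(\tau)-x)h'(\tau)\ge0$. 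Hence in the strip $S(\tau):=(h(\tau)-\tfrac12,h(\tau))$,
\[
\partial_\tau W+d(-\Delta)^sW\ \ge\ dAc_*\ \ge\ LM\ \ge\ f(u)=\partial_\tau u+d(-\Delta)^s u .
\]

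\emph{Step 3 (comparison).} Consider $v:=u-W$ for $\tau\ge \tau_0$, with a fixed $\tau_0\in(0,1)$, say $\tau_0=1/2$. Then $\partial_\tau v+d(-\Delta)^s v\le0$ in $S(\tau)$. Outside the strip $v\le 0$: for $x\ge h(\tau)$ we have $u=0\le W$, and for $x\le h(\tau)-\tfrac12$ we have $W\ge A2^{-s}\ge M\ge u$. If also $v(\tau_0,\cdot)\le0$, then the weak maximum principle for the nonlocal parabolic operator on the moving domain gives $v\le0$ for $\tau\ge\tau_0$. The argument is the usual one: look at a first touching point, where $(-\Delta)^s v<0$ and $\partial_\tau v\ge0$. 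This point lies in the open strip, where $u$ is a classical solution; $W$ is smooth there, and the only singularity of $W$ is at $x=h(\tau)$, where $v\le0$ already. The comparison yields $u(t,x)\le A\min\{(h(t)-x)^s,1\}\le A(h(t)-x)^s$ for $t>1$. For $h(t)-x\ge1$ the bound holds trivially since $u\le M\le A$.

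\emph{Main obstacle: the initial time.} Since $u_0$ is only continuous, $u_0\le W(0,\cdot)$ may fail. This is why the comparison starts at $\tau_0=1/2$ rather than at $0$, and why $t>1$ appears in the statement. I would take from the well-posedness theory of \cite{CDN26} the boundary estimate $u(\tau_0,x)\le C'(h(\tau_0)-x)^s$. That estimate comes from the heat kernel bounds of \cite{CKS} used there, valid for positive times. Enlarging $A$ to $\max\{A,C'\}$ then makes $v(\tau_0,\cdot)\le0$. If such an estimate is not directly available, I would derive it the same way on $[\tau_0/2,\tau_0]$. On that interval I would compare $u$ with the solution of the linear problem $z_\tau+d(-\Delta)^sz=LM$ in the fixed interval $(g(\tau_0),h(\tau_0))\supset(g(\tau),h(\tau))$, with zero exterior data and initial value $M$. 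The Dirichlet heat kernel bound $p(\tau,x,y)\lesssim \min\{1,\delta(x)^s/\tau^{1/2}\}\cdots$ then gives the $\delta(x)^s$ decay at a positive time.
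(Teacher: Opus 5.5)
Your argument is correct in substance, but it takes a different route from the paper. The paper uses no moving barrier. For each $\tau>1$ it freezes the domain at its largest value $(g(\tau),h(\tau))$ on the window $[\tau-1,\tau]$, which is legitimate because $g$ decreases and $h$ increases. It then compares $u$ with the solution of $v_t+(-\Delta)^sv=K$ in that fixed interval, with zero exterior data and initial value $u(\tau-1,\cdot)$. The heat-kernel based estimate (Lemma 2.8 of \cite{CDN26}) then gives $u(\tau,x)\le v(1,x)\le C_0\min\{1,\rho^s(x)\}$.

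Your fallback for the initial time is exactly this argument, carried out once at $\tau_0$. After that you propagate the bound with the explicit barrier $A\min\{(h(\tau)-x)_+^s,1\}$. Its supersolution property comes from three facts: $y_+^s$ is $s$-harmonic, truncation at level $1$ adds the positive correction $c_*$, and $h'\ge0$.

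The two routes trade off differently:
\begin{itemize}
\item The paper's proof is shorter. It needs neither a comparison on a moving domain nor a barrier that is only $C^s$ at the free boundary. However, it requires the constant in the heat-kernel estimate to be uniform over the intervals $(g(\tau),h(\tau))$, whose lengths tend to infinity. The paper simply asserts that $C_0$ depends only on $\|u\|_\infty$ and $K$; this can be justified, for instance, by domain monotonicity against half-lines.
\item Your route needs the heat-kernel bound on a single fixed interval, so no uniformity in the domain size is required. Uniformity in time comes for free from the scale-free profile $y_+^s$ and the monotonicity of $h$.
\end{itemize}

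Two small repairs are needed in your Step 3.
\begin{itemize}
\item At a first touching point of $v=u-W$ with $0$ from below, the point $x_1$ is a global maximum of $v(\tau_1,\cdot)$. So $(-\Delta)^s v(\tau_1,x_1)>0$, not $<0$. It is this sign, together with $\partial_\tau v\ge0$, that contradicts $\partial_\tau v+d(-\Delta)^s v\le 0$.
\item Since $v\equiv0$ on $[h(\tau),\infty)$, you have $\max_x v(\tau,\cdot)=0$ for every $\tau$. A ``first touching time'' is therefore not well defined as you stated it. Compare instead with $W+\epsilon$: then $u-W-\epsilon\le-\epsilon$ off the open strip and at $\tau_0$. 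The touching point is forced into the open strip, inside $(g(\tau_1),h(\tau_1))$, where $u$ is classical and $W$ is smooth. Finally let $\epsilon\to0$.
\end{itemize}
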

\begin{proof}
Using ${\bf (f1)}$ we can compare \eqref{eq 1.1} with  the corresponding ODE problem to see that $\|u\|_\infty$ is finite.
Let $K:=\max_{u\in [0, \|u\|_\infty]}f(u)$ and
for any $\tau>1$ we now consider the problem
\begin{equation}\label{tau}
\begin{cases}
v_t+(-\Delta)^{s} v=K & \mbox{ for } t\in (0, 1],\ x\in (g(\tau), h(\tau)),\\
v=0 & \mbox{ for } t\in (0, 1], \  \ x\in\R\setminus (g(\tau), h(\tau)),\\
v(0,x)=u(\tau-1, x) & \mbox{ for }  x\in\R.
\end{cases}
\end{equation}
By Lemma 2.8 of \cite{CDN26}, there exists $C_0>0$ depending only on $\|u\|_\infty$ and $K$ such that
the unique solution $v(t,x)$ of \eqref{tau} satisfies
\[
v(t,x)\leq C_0 \min\{1, \frac{\rho^{s}(x)}{\sqrt{t}}\} \ \mbox{ for } t\in (0, 1],\ x\in (g(\tau), h(\tau)),
\]
where $\rho(x):={\rm dist} (x, \partial (g(\tau), h(\tau)))=\min\{h(\tau)-x, x-g(\tau)\}$ for $x\in (g(\tau), h(\tau))$.

Using the comparison principle we easily see that
\[
u(t+\tau-1,x)\leq v(t,x) \mbox{ for } t\in (0, 1],\ x\in\R.
\]
In particular,
\[
u(\tau, x)\leq v(1,x)\leq C_0\min\{1, \rho^{s}(x)\} \ \mbox{ for }  x\in (g(\tau), h(\tau)).
\]
Since this holds for any $\tau>1$, the desired estimates in \eqref{eq:barrier} follow immediately.
\end{proof}

\begin{lemma}\label{lem:half-upper}
Under the assumptions of Theorem~\ref{teo 1}, if $s=1/2$
then
\begin{equation}\label{eq:half-upper}
\limsup_{t\to\infty}\frac{h(t)}{t\ln t}\le\mu,\qquad
\limsup_{t\to\infty}\frac{-g(t)}{t\ln t}\le\mu.
\end{equation}
\end{lemma}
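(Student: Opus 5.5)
We need the upper bound $\limsup h(t)/(t\ln t)\le\mu$ for $s=1/2$. We will not build an upper solution. Instead we estimate $h'(t)$ directly from the free boundary condition, using the boundary barrier of Lemma~\ref{lem:barrier}, which converts the logarithmically divergent kernel into a computable bound.

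For $s=1/2$ we integrate out $y$ in the free boundary condition:
\[
h'(t)=\mu\int_{g(t)}^{h(t)}\frac{u(t,x)}{h(t)-x}\,\dd x .
\]
Next we fix $\eps>0$ and obtain a uniform upper bound on $u$ at large times. Comparison with the ODE $v'=f(v)$, $v(0)=\|u_0\|_\infty$, exactly as in the proof of Lemma~\ref{<c0}, gives $T_\eps>1$ such that $u(t,x)\le 1+\eps$ for $t\ge T_\eps$. Lemma~\ref{lem:barrier} also gives $u(t,x)\le C_0(h(t)-x)^{1/2}$ for $t>1$.

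We split the $x$-integral at $h(t)-1$. On $[h(t)-1,h(t)]$ we use the barrier, so this piece is at most $\mu C_0\int_0^1 r^{-1/2}\,\dd r=2\mu C_0$. On $[g(t),h(t)-1]$ we use $u\le 1+\eps$, so this piece is at most $\mu(1+\eps)\ln(h(t)-g(t))$. Hence, for $t\ge T_\eps$,
\[
h'(t)\le \mu(1+\eps)\ln\bigl(h(t)-g(t)\bigr)+2\mu C_0 ,
\]
and by the symmetric argument (the second estimate in \eqref{eq:barrier}) the same bound holds for $-g'(t)$. Setting $H(t):=h(t)-g(t)$, we get
\[
H'(t)\le 2\mu(1+\eps)\ln H(t)+4\mu C_0 .
\]

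This differential inequality first gives an a priori bound: it forces $H(t)=O(t\ln t)$, hence $\ln H(t)\le \ln t+\ln\ln t+O(1)=(1+o(1))\ln t$. To see this, compare with the supersolution $A(t+\theta)\ln(t+\theta)$ for large $A,\theta$. Substituting back into the bound for $h'$ alone gives
\[
h'(t)\le \mu(1+\eps)(1+o(1))\ln t .
\]
Integrating, $\limsup_{t\to\infty} h(t)/(t\ln t)\le\mu(1+\eps)$. Letting $\eps\to0$ gives the claim, and the estimate for $-g(t)$ follows by the reflection $x\mapsto -x$.

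The main obstacle is the step where the near-boundary singularity of the kernel must be absorbed. Without Lemma~\ref{lem:barrier}, the bound $u\le 1+\eps$ alone would make the integral diverge at $x=h(t)$. The barrier handles this with a constant independent of $t$, and that independence is exactly what Lemma~\ref{lem:barrier} provides. The remaining care is only that the a priori bound $\ln H=(1+o(1))\ln t$ be derived before it is used; the ODE comparison above does this.
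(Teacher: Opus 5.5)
Your argument is correct and is essentially the paper's proof. You use the same reduction $h'(t)=\mu\int_{g(t)}^{h(t)}u/(h(t)-x)\,dx$ and the same split at $h(t)-1$, with Lemma~\ref{lem:barrier} near the boundary and $u\le 1+\eps$ elsewhere. Your value $\ln(h(t)-g(t))$ for the far piece is the exact integral; the paper's $\ln(h(t)-g(t)-1)$ is a harmless slip.

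The only difference is the closing ODE step. The paper compares $M=\max\{h,-g\}$, using one-sided derivatives, directly with $\sigma(1+\delta)t\ln t$. You instead use the $C^1$ quantity $H=h-g$ only for a crude $O(t\ln t)$ bound, and then integrate $h'\le\mu(1+\eps)(1+o(1))\ln t$. This works because the logarithm absorbs constant factors.
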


\begin{proof}
Fix $T_0>1$ such that
$u(t,x)\le1+\eps$ for all $t\geq T_0$ and  $x\in[g(t),h(t)]$. For $s=1/2$ the free boundary
condition reads
\[
h'(t)=\mu\int_{g(t)}^{h(t)}\frac{u(t,x)}{h(t)-x}\,\dd x.
\]
Splitting the integral, using $u\le1+\eps$ on $[g(t),h(t)-1]$ and 
\eqref{eq:barrier} on $[h(t)-1,h(t)]$, we obtain
\begin{equation}\label{h'}\begin{aligned}
h'(t)
&\le\mu(1+\eps)\int_{g(t)}^{h(t)-1}\frac{\dd x}{h(t)-x}
+\mu C_0\int_{h(t)-1}^{h(t)}\frac{\dd x}{(h(t)-x)^{1/2}}\\
&=\mu(1+\eps)\ln\bigl(h(t)-g(t)-1\bigr)+2\mu C_0.
\end{aligned}
\end{equation}
By symmetry (and using the second inequality in \eqref{eq:barrier}),
\begin{equation}\label{eq:half-ode}
-g'(t)\le\mu(1+\eps)\ln\bigl(h(t)-g(t)-1\bigr)+2\mu C_0.
\end{equation}

Set $M(t):=\max\{h(t),-g(t)\}$. Then
$h(t)-g(t)\le2M(t)$, and hence
$\ln(h(t)-g(t))\le\ln M(t)+\ln2$. We note that $M'(t)$ exists for a.e. $t>0$, and the left and right derivatives
 $M'(t-0)$ and $M'(t+0)$ exist for every $t>0$. In case they are not the same, for convenience of later use, we define $M'(t)=M'(t-0)$.
Then
$M'(t)\le\max\{h'(t),-g'(t)\}$ for  every $t>0$, and it follows from \eqref{h'} and \eqref{eq:half-ode} that, for all $t\ge T_0$,
\[
M'(t)\le\mu(1+\eps)\ln M(t)+\mu(1+\eps)\ln2+2\mu C_0.
\]
 Since $M(t)\to\infty$ as $t\to\infty$,  for any given $\delta>0$ and for all large $t$, 
we have 
\[
M'(t)\le\mu(1+\eps)(1+\delta)\ln M(t).
\]

 Let 
 \[\mbox{$z(t):=\sigma (1+\delta)\, t\ln t$ with $\sigma:=\mu(1+\eps)(1+\delta)$.}
 \]
 Then
 \[
 \ln z(t)=\ln t+\ln \ln t+\ln [\sigma(1+\delta)]< (1+\delta)\ln t \mbox{ for } t\gg1.
 \]
 Hence
 \[
 z'(t)=\sigma (1+\delta) (\ln t+1)>\sigma(1+\delta) \ln t> \sigma \ln z(t) \mbox{ for } t\gg 1.
 \]
 Fix $T_1\geq T_0$ large so that
 \[
   M(T_0)< z(T_1) \mbox{ and } \begin{cases}M'(t)\le \sigma \ln M(t) & \mbox{ for } t\geq T_0,\\
 z'(t)> \sigma \ln z(t)& \mbox{ for } t\geq T_1. \end{cases}
 \]
 We claim that $M(T_0+t)<z(T_1+t)$ for all $t>0$. Clearly this is true for $t>0$ small. If the claim is false, then there exists a first time moment $t_1>0$ such that
 \[
 M(T_0+t_1)=z(T_1+t_1),\ M(T_0+t)<z(T_1+t) \mbox{ for } t\in [0, t_1).
 \]
 Therefore $M'(T_0+t_1)\geq z'(T_1+t_1)$. On the other hand,
 \[
 M'(T_0+t_1)\leq \sigma \ln M(T_0+t_1) =\sigma \ln z(T_1+t_1)<z'(T_1+t_1).
 \]
 This contradiction proves the claim. Hence
 \[
M(T_0+t)<z(T_1+t) \mbox{ for } t>0,
\]
and
\[
\limsup_{t\to\infty}\frac{M(t)}{t\ln t}=\limsup_{t\to\infty}\frac{M(T_0+t)}{t\ln t}\le\lim_{t\to\infty}\frac{z(T_1+t)}{t\ln t}=\sigma (1+\delta).
\]
Since $h(t)\le M(t)$ and $-g(t)\le M(t)$, letting $\delta\to0$ and
$\eps\to0$ gives \eqref{eq:half-upper}.
\end{proof}

Clearly Theorem~\ref{teo 1}(ii) is a consequence of
 Lemmas~\ref{lem:half-lower} and \ref{lem:half-upper}. We note again that the lower bound for $u(t,x)$ in the proof of Lemma \ref{lem:half-lower} and the conclusion in Lemma \ref{lem:half-upper} imply that the boundary of the level set $\{x: u(t,x)>\sigma\}$ $(0<\sigma<1)$ propagates with the same rate as $h(t)$ and $g(t)$ when $t\to\infty$.


\begin{thebibliography}{1000} 



\bibitem{CR13}
X.~Cabr\'e, J.-M.~Roquejoffre,
{The influence of fractional diffusion in Fisher--KPP equations},
Comm.\ Math.\ Phys.,\ \textbf{320} (2013), 679--722.

\bibitem{CDLL}
J.-F.~Cao, Y.~Du, F.~Li, W.-T.~Li,
 The dynamics of a Fisher--KPP nonlocal diffusion model with free
boundaries,
J.\ Funct.\ Anal.,\ \textbf{277} (2019), 2772--2814.

\bibitem{CDN26} H. Chen, Y. Du, W. Ni, The fractional Fisher-KPP equation with free boundaries,
{Preprint}, 2026 (arXiv:2609.34504).

\bibitem{CKS}	Z.-Q. Chen, P. Kim and R. Song,  Heat kernel estimates for Dirichlet fractional Laplacian, 
{ J. Eur. Math. Soc.}, {\bf 12} (2010), 1307-1329.



\bibitem{DLZ}
Y.~Du, F.~Li, M.~Zhou,
{Semi-wave and spreading speed of the nonlocal Fisher--KPP equation
with free boundaries},
J.\ Math.\ Pures Appl.,\ \textbf{154} (2021), 30--66.


\bibitem{DL}
Y.~Du, Z.~Lin,
{Spreading--vanishing dichotomy in the diffusive logistic model with a
free boundary},
SIAM J.\ Math.\ Anal.,\ \textbf{42} (2010), 377--405.

\bibitem{DLou}
Y.~Du, B.~Lou,
{Spreading and vanishing in nonlinear diffusion problems with free
boundaries},
J.\ Eur.\ Math.\ Soc.,\ \textbf{17} (2015), 2673--2724.

	
	\bibitem{DMZ}
		Y. Du, H. Matsuzawa and M. Zhou, 
Sharp estimate of the spreading speed determined by nonlinear free boundary problems, 
 SIAM J. Math. Anal., {\bf 46} (2014), 375-396.

\bibitem{DN-MA}
Y.~Du, W.~Ni,
{Exact rate of accelerated propagation in the Fisher--KPP equation with
nonlocal diffusion and free boundaries},
Math.\ Ann.,\ \textbf{389} (2024), 179--216.


\bibitem {DN25}  Y. Du, W. Ni, Rate of propagation for the Fisher-KPP equation with nonlocal diffusion and free boundaries,
{ J. Eur. Math. Soc.,} {\bf  27} (2025)  1267--1319.


\bibitem {FY13}  P. Felmer,  M.  Yangari,  
Fast propagation for fractional KPP equations with slowly decaying initial conditions,
{SIAM J. Math. Anal.}, {\bf 45} (2013), no. 2, 662--678.

\bibitem{FR2017} X. Fern\'andez-Real and  X. Ros-Oton,
	Regularity theory for general stable operators: Parabolic equations,
	{J. Funct. Anal.}, {\bf 272} (2017), 4165-4221.
	
	\bibitem{F} R. A. Fisher, The wave of advance of advantageous genes, 
{Ann. Eugenics}, {\bf 7} (1937), 335-369.

	
\bibitem{GJZ} 
T. Grzywny, T. Jakubowski and G. Zurek, Green function for gradient perturbation of unimodal L\'{e}vy
processes in the real line, {Bull. Malays. Math. Sci. Soc.}, {\bf  43} (2020), 1223-1251.

	
\bibitem{KPP} A. N. Kolmogorov, I. G. Petrovsky and N. S. Piskunov, A study of the diffusion equation with increase in the amount of substance, and its application to a biological problem, { Bull. Moscow Univ. Math. Mech.}, {\bf 1} (1937), 1-25.
	
\bibitem {K21} T. Kukuljan, 
The fractional obstacle problem with drift: higher regularity of free boundaries, {J. Funct. Anal.}, {\bf 281} (2021), no. 8, Paper No. 109114, 74 pp.


\bibitem {S07} L. Silvestre, Regularity of the obstacle problem for a fractional power of the Laplace operator,
  {Comm. Pure Appl. Math.}, {\bf  60} (2007), 67-112.
 

 \end{thebibliography}
 \end{document}